\tikzset{
  flow/.style={->,thick},
  noflow/.style={-,gray},
 head/.style = {fill = white, text=black},
 plaque/.style = {draw, rectangle, minimum size = 10mm, fill=white}, 
  posplaque/.style = {draw, star,star points=7,star point ratio=0.8, minimum size = 10mm, fill=white}, 
}
\theoremstyle{plain}
\newtheorem{theorem}[equation]{Theorem}
\newtheorem{lemma}[equation]{Lemma}
\newtheorem{conjecture}[equation]{Conjecture}
\Crefname{conjecture}{Conjecture}{Conjectures}
\newtheorem{proposition}[equation]{Proposition}
\newtheorem{corollary}[equation]{Corollary}
\theoremstyle{definition}
\newtheorem{definition}[equation]{Definition}
\newtheorem{remark}[equation]{Remark}
\numberwithin{equation}{section}
\newenvironment{example}
  {\pushQED{\qed}\examplex}
  {\popQED\endexamplex}
\crefname{examplex}{example}{examples}
\tikzstyle{oriented}=[
\definecolor{amethyst}{rgb}{.6,0.2,1} 
\definecolor{dark}{rgb}{0.05, 0.5, 0.06} 
\newcommand{\bbb}{\mathsf{b}}
\newcommand{\trip}{\mathsf{trip}}
\newcommand{\prom}{\mathsf{prom}}
\DeclareMathOperator{\promotion}{\mathcal{P}}
\DeclareMathOperator{\gromotion}{\mathcal{G}}
\DeclareMathOperator{\tograph}{\mathfrak{G}}
\DeclareMathOperator{\lattice}{\mathcal{L}}
\newcommand{\SYT}{\mathsf{SYT}}
\newcommand{\cA}{\mathcal{A}}
\DeclareMathOperator{\grow}{\mathrm{grow}}
\newcommand{\inc}{\ensuremath{\mathrm{Inc}}}
\newcommand{\packinc}{\ensuremath{\mathrm{PackInc}}}
\newcommand{\nc}{\mathcal{NC}}
\newcommand{\SL}{\mathrm{SL}}
\newcommand{\fw}{\mathrm{FW}}
\definecolor{darkblue}{rgb}{0.0,0,0.7}
\newcommand{\newword}[1]{\textcolor{darkblue}{\textbf{\emph{#1}}}}
   \def\MR#1{}
\title{Promotion digraphs}
\author{Rebecca Patrias}
\address[RP]{\parbox{\linewidth}{Dept.\ of Mathematics, University of St.\ Thomas, St.\ Paul, MN, 55105, USA}}
\email{\parbox[t]{\linewidth}{rebecca.patrias@stthomas.edu}}
\author{Oliver Pechenik}
\address[OP]{\parbox{\linewidth}{Dept.\ of Combinatorics \& Optimization, University of Waterloo, Waterloo, ON, N2L 3G1, Canada}}
\email{\parbox[t]{\linewidth}{oliver.pechenik@uwaterloo.ca}}
\author{Jessica Striker}
\address[JS]{\parbox{\linewidth}{Dept.\ of Mathematics, North Dakota State University, Fargo, ND, 58102, USA}}
\email{\parbox[t]{\linewidth}{jessica.striker@ndsu.edu}}
\date{\today}
\keywords{promotion, web, plabic graph, standard tableau, increasing tableau}
\subjclass[2020]{Primary 05E18; Secondary 05C20, 05E10.}
\begin{document}

\begin{abstract}
Work of Gaetz, Pechenik, Pfannerer, Striker, and Swanson (2024) introduced \emph{promotion permutations} for a rectangular standard Young tableau $T$. These promotion permutations encode important features of $T$ and its orbit under Sch\"utzenberger's promotion operator. Indeed, the promotion permutations uniquely determine the tableau $T$.

We introduce more general \emph{promotion digraphs} for both standard and increasing tableaux of arbitrary shape. For rectangular standard tableaux, this construction recovers the functional digraphs of the promotion permutations. Among other facts, we show that promotion digraphs uniquely determine $T$ when $T$ is standard of arbitrary shape. We show that an increasing tableau $T$ is not generally determined by its promotion digraphs but conjecture that it is when $T$ is rectangular. We provide evidence for this conjecture, including a complete characterization of the promotion digraphs for two-row rectangular increasing tableaux. We use promotion digraphs for three-row rectangular increasing tableaux to conjecture a connection between their dynamics and the \emph{flamingo webs} recently introduced by Kim to give a diagrammatic basis of the Specht module $S^{(k^3,1^{n-3k})}$.
\end{abstract}

\maketitle

\section{Introduction}

Standard Young tableaux are central objects in algebraic combinatorics. We are especially interested in the dynamics of M.-P.~Sch\"utzenberger's \emph{promotion} operator on tableaux, which has applications to both representation theory (e.g.\ \cite{Rhoades:thesis,Petersen.Pylyavskyy.Rhoades,Fontaine.Kamnitzer,Shen.Weng}) and algebraic geometry (e.g.\ \cite{Purbhoo,Speyer, Levinson}). To better understand the orbit structure of promotion, Gaetz, Pechenik, Pfannerer, Striker, and Swanson \cite{Gaetz.Pechenik.Pfannerer.Striker.Swanson:fluctuating} introduced
\emph{promotion permutations} for rectangular standard Young tableaux. These turned out to be key tools for developing the first rotation-invariant $SL_4$-web basis in \cite{Gaetz.Pechenik.Pfannerer.Striker.Swanson:SL4}. The key fact for standard tableaux of rectangular shape is that the order of promotion divides the number of values in the tableaux; this fact does not generalize to tableaux of general partition shape.

In \cite{Gaetz.Pechenik.Pfannerer.Striker.Swanson:fluctuating}, there is also a definition of \emph{promotion functions} for nonrectangular shapes, but almost nothing is studied about these functions outside of the rectangular case. Here, we first reinterpret promotion permutations and promotion functions by studying their functional digraphs, which we call \emph{promotion digraphs}. From this perspective, we prove a variety of results. In particular, we give a partial characterization of the digraphs that can arise as promotion digraphs and prove that any standard tableau $T$ is uniquely characterized by its promotion digraphs (\Cref{thm:antiexcedance}). We also take this opportunity to provide an additional exposition of some results of \cite{Gaetz.Pechenik.Pfannerer.Striker.Swanson:fluctuating} that were previously only described in the full generality of \emph{fluctuating tableaux}. This includes a useful characterization of promotion in terms of balance points (\Cref{prop:promotion_via_balance_points}), which has been used implicitly in the literature. We also define a minor variant of promotion, called \emph{gromotion}, as a convenient way to study an orbit of promotion.

The main reason that we prefer working with promotion digraphs over promotion functions is that we can also define promotion digraphs for \emph{$K$-promotion}, $\promotion$, on \emph{increasing tableaux}. An increasing tableau of shape $\lambda$ and entries in $[q] \coloneqq \{1,\ldots,q\}$ is a filling of the Young diagram of $\lambda$ with elements of $[q]$ such that rows and columns are strictly increasing; we write $\inc^q(\lambda)$ for the set of increasing tableaux of shape $\lambda$ with entries from $[q]$. $K$-promotion was introduced in \cite{Pechenik:CSP}, building on the combinatorics of increasing tableaux as developed by H.~Thomas and A.~Yong \cite{Thomas.Yong:K} in application to $K$-theoretic Schubert calculus. $K$-promotion has since received significant attention from the perspective of dynamical algebraic combinatorics. In contrast with standard tableaux, the order of $K$-promotion on increasing tableaux of rectangular shape $r \times c$ does not in general divide $q$ if $r\geq 4$, but can instead be significantly greater (see \Cref{ex:indegree0}). This behavior is only partially understood (see, in particular, \cite{Dilks.Pechenik.Striker,Patrias.Pechenik:CFDF}). However, it is known from \cite{Pechenik:CSP} that the order of $K$-promotion on increasing tableaux of rectangular shape $r \times c$ divides $q$ when $r \leq 2$. This leaves the case $r=3$ intriguingly open. In this case, we have the following conjecture but very few results.
\begin{conjecture}[\protect{\cite[Conjecture~4.12]{Dilks.Pechenik.Striker}}]\label{conj:3row_order}
    For $T \in \inc^q(3 \times c)$ a three-row rectangular increasing tableau, we have $\promotion^q(T) = T$.
\end{conjecture}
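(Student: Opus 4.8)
The plan is to prove the stronger statement that $\promotion$ on $\inc^q(3 \times c)$ is conjugate to rotation, by realizing these tableaux as a diagrammatic basis on which $\promotion$ acts geometrically. This mirrors the strategy that succeeded for ordinary promotion on three-row standard tableaux, where Petersen--Pylyavskyy--Rhoades \cite{Petersen.Pylyavskyy.Rhoades} identified promotion with rotation of $SL_3$ webs, and the promotion-permutation approach to the $SL_4$ web basis in \cite{Gaetz.Pechenik.Pfannerer.Striker.Swanson:SL4}. Concretely, I would seek a bijection $\Phi \colon \inc^q(3 \times c) \to \mathcal{F}_q$ from three-row rectangular increasing tableaux onto an appropriate family $\mathcal{F}_q$ of Kim's flamingo webs, drawn in a disk with $q$ marked boundary vertices arranged in a cycle, together with the equivariance $\Phi \circ \promotion = \rot \circ \Phi$, where $\rot$ rotates the boundary by one step. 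Since $\rot^q$ is the identity on $\mathcal{F}_q$, the conjecture $\promotion^q(T) = T$ would follow immediately.

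The steps I would carry out are the following. First, use the promotion digraphs of a three-row rectangular increasing tableau to isolate its rotation-invariant combinatorial data, using the observed match with flamingo-web dynamics as the guide for how the boundary vertices and internal edges of $\Phi(T)$ should be read off. Second, define $\Phi$ explicitly, most plausibly through a growth-diagram or trajectory description of $K$-promotion, so that the edges of the web record the paths along which values travel under repeated $\promotion$. Third, verify that $\Phi$ is a bijection onto $\mathcal{F}_q$ by matching cardinalities against the Specht module $S^{(k^3, 1^{n-3k})}$, of which the flamingo webs form a basis, under the parameter dictionary $(q,c) \leftrightarrow (n,k)$ with $n = q$ and $k = c$. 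Fourth, prove the equivariance $\Phi \circ \promotion = \rot \circ \Phi$ by a local analysis of a single $K$-jeu-de-taquin pass, checking that emptying the smallest value and reinserting a maximal value corresponds to unhooking one boundary vertex and reattaching it on the far side. As consistency checks, the $r \le 2$ case \cite{Pechenik:CSP} and the resonance results of \cite{Dilks.Pechenik.Striker}, which already guarantee a statistic rotating with frequency $q$, should be recoverable as degenerations of this picture.

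The main obstacle is the fourth step: establishing that $\promotion$ genuinely acts as a one-step rotation on webs. Unlike ordinary promotion, $K$-promotion can merge and split entries during $K$-jeu-de-taquin, so the local moves relating $\Phi(T)$ to $\Phi(\promotion T)$ are markedly more delicate than in the standard-tableau setting, and it is precisely here that the correspondence is currently only conjectural. A secondary difficulty is pinning down the correct target family $\mathcal{F}_q$ and the parameter dictionary, since flamingo webs index $S^{(k^3,1^{n-3k})}$ rather than a pure three-row shape; getting this matching right is what makes the enumeration in the third step and the local rule in the fourth step mutually consistent. Should a full web-theoretic proof prove out of reach, a fallback is a purely combinatorial attack: show directly, via the balance-point description of $\promotion$ (\Cref{prop:promotion_via_balance_points}) together with the structure of the promotion digraphs, that the trajectory of each value closes up after exactly $q$ applications of $\promotion$; here the crux is controlling the three-row interactions that are invisible in the $r \le 2$ argument.
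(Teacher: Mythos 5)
The statement you are addressing is not a theorem of this paper: it is an open conjecture (due to Dilks--Pechenik--Striker \cite{Dilks.Pechenik.Striker}), restated here as \Cref{conj:3row_order}, and the paper offers no proof of it. So there is no proof to compare yours against; instead I will point out where your plan, read as a proof, breaks down. Your overall strategy --- transport $K$-promotion to rotation of flamingo webs via an equivariant bijection --- is essentially the paper's own \Cref{conj:trip=prom}, and the decisive step (your fourth step, the equivariance $\Phi\circ\promotion=\rot\circ\Phi$) is exactly what remains unproven there. You acknowledge this yourself, which makes your proposal a research program rather than a proof.

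More concretely, the bijection you posit in your third step cannot exist as stated. The paper's map $\tau$ goes $\fw(q,k)\to\inc^q(3\times(q-2k))$, so the correct parameter dictionary is $c=q-2k$, i.e.\ $k=(q-c)/2$, not $k=c$; in particular no family of flamingo webs is even assigned to $\inc^q(3\times c)$ when $q$ and $c$ have opposite parity, and even when the parities agree the paper states explicitly that $\tau$ is not surjective. Hence a cardinality count against the Specht module $S^{(k^3,1^{n-3k})}$ \cite{Kim:flamingo} would reveal that the two sets are \emph{not} equinumerous, and your $\Phi$ cannot be a bijection onto all of $\inc^q(3\times c)$. At best this route would prove $\promotion^q(T)=T$ for tableaux in the image of $\tau$, which is precisely the partial implication the paper already records. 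Your fallback --- a direct balance-point analysis --- runs into the obstruction flagged in the remark following \Cref{thm:complete_graphs}: the balance/teetering-point description of $K$-promotion beyond two rows becomes substantially more technical, and no analogue of \Cref{thm:Kpromotion_2row_via_balance_points} is currently available for three-row shapes. (Note also that \Cref{prop:promotion_via_balance_points}, which you invoke, concerns ordinary promotion on standard tableaux, not $K$-promotion on increasing tableaux.)
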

We hope that promotion digraphs can be a useful tool towards proving \Cref{conj:3row_order}. (\cite{Dilks.Pechenik.Striker} reports computer verification of \Cref{conj:3row_order} for $c \leq 7$; we have since additionally verified the conjecture for $c=8$.)

In this paper, we conjecture that a rectangular increasing tableau is uniquely determined by its promotion digraphs (\Cref{conj:inc_tab_determined}). A remarkable feature of this conjecture is that, unlike for standard tableaux, it does not extend to increasing tableaux of nonrectangular shapes; indeed, we give explicit examples of nonrectangular increasing tableaux with the same promotion digraphs (\Cref{ex:cant_get_shape,ex:cant_get_top_row}), and moreover of a nonrectangular increasing tableau with the same promotion digraph as a rectangular tableau (\Cref{ex:rect_nonrect_same_digraph}). Although we do not know a general algorithm for the reconstruction of a rectangular increasing tableau from its promotion digraphs, in practice our partial results are highly effective for reconstruction. We also give some partial characterizations of the digraphs that arise as promotion digraphs of increasing tableaux.

In the special case of two-row increasing tableaux, we prove an analogue (\Cref{thm:Kpromotion_2row_via_balance_points}) of the balance point characterization of promotion.
We use this characterization to give a complete description of the promotion digraphs of two-row rectangular increasing tableaux (\Cref{thm:complete_graphs}), establishing the $2$-row case of \Cref{conj:inc_tab_determined}. These promotion digraphs turn out to be closely related to the noncrossing set partitions that were connected to two-row rectangular increasing tableaux in \cite{Pechenik:CSP}. Reinterpreting noncrossing set partitions as a kind of \emph{plabic graph}, we show in \Cref{cor:trip=prom_2row} that these promotion digraphs coincide with a notion that we introduce (building on ideas of \cite{Postnikov, Gaetz.Pechenik.Pfannerer.Striker.Swanson:SL4}) of \emph{$(i,r)$-trip digraphs} for the corresponding plabic graphs. 
This result is an instance of the ``trip$=$prom phenomenon'' that is a guiding principle in recent work on webs (e.g.\ \cite{Hopkins.Rubey,Gaetz.Pechenik.Pfannerer.Striker.Swanson:SL4,Gaetz.Pechenik.Pfannerer.Striker.Swanson:2column}).

Building on \cite{Patrias.Pechenik.Striker,Fraser.Patrias.Pechenik.Striker}, J.~Kim \cite{Kim:embedding,Kim:flamingo} introduced \emph{flamingo webs} to give a diagrammatic web basis for the Specht module $S^{(k^3,1^{n-3k})}$ of the symmetric group $\mathfrak{S}_n$. We conjecture (\Cref{conj:trip=prom}) another instance of ``trip$=$prom'' in this context. More precisely, we claim that for each flamingo web $W$, there is a unique three-row rectangular increasing tableau $\tau(W) \in \inc^q(3 \times c)$ such that the $(i,3)$-trip digraphs of $W$ coincide with the promotion digraphs of $\tau(W)$. In particular, this would imply that all tableaux in the image of $\tau$ satisfy \Cref{conj:3row_order}. Note, however, that the map $\tau$ is not surjective; in ongoing work with Kim, we aim to develop generalized flamingo webs in ``trip$=$prom''-correspondence to the other three-row rectangular increasing tableaux. Moreover, the representation-theoretic significance of \Cref{conj:trip=prom} is currently mysterious, as is the appearance of plabic graphs in this story that is not facially related to positive geometries. Further discussion of these points will appear elsewhere.

\medskip
\noindent
{\bf This paper is organized as follows.} In \Cref{sec:SYT}, we study promotion digraphs of standard tableaux, including exposition of material from \cite{Gaetz.Pechenik.Pfannerer.Striker.Swanson:fluctuating} and our new results in the nonrectangular setting. Results about promotion digraphs for standard tableaux are summarized in \Cref{sec:summary_standard}. In \Cref{sec:increasing}, we turn to promotion digraphs for increasing tableaux. After some general constructions and results in \Cref{sec:general_inc,sec:inc_prom_digraph}, we further study the rectangular case in \Cref{sec:rect_inc,sec:vertex_degrees_inc}. 
The various results of \Cref{sec:general_inc,sec:inc_prom_digraph,sec:rect_inc,sec:vertex_degrees_inc} are briefly compiled and summarized in \Cref{sec:summary}. 
\Cref{sec:small_r} specializes to consider increasing tableaux with $2$ or $3$ rows, settings with special properties.
The case of two-row increasing tableaux is treated in detail in \Cref{sec:2row}. Finally, \Cref{sec:trips} recalls the theory of plabic graphs and flamingo webs, introduces their $(i,r)$-trip digraphs, and relates them to $K$-promotion on increasing tableaux with few rows via 
\Cref{cor:trip=prom_2row} and \Cref{conj:trip=prom}.

\section{Standard tableaux}\label{sec:SYT}
\subsection{Promotion and gromotion}

Consider an integer partition $\lambda=(\lambda_1\geq \lambda_2\geq\cdots\geq\lambda_r>0)$. The \newword{length} of $\lambda$ is $\ell(\lambda) \coloneqq r$. The \newword{Young diagram} corresponding to $\lambda$ is a left-justified array of boxes with $\lambda_i$ boxes in the $i$th row from the top. We often refer to both the integer partition and the Young diagram by $\lambda$. 
We let $|\lambda|$ denote the number of boxes in the shape $\lambda$.  
We further identify $\lambda$ with the poset whose elements are boxes of $\lambda$, whose unique minimal element is the upper left box, and whose cover relations are between adjacent boxes. 

An \newword{alphabet} $\cA$ is a totally ordered set. 
A \newword{standard Young tableau} of shape $\lambda$ and finite alphabet $\cA$ is a bijective filling of shape $\lambda$ with the elements of $\cA$ such that 
entries increase left-to-right across rows and down columns. 
We denote the set of standard Young tableaux of shape $\lambda$ with alphabet $\cA$ by $\SYT^{\cA}(\lambda)$. When $\cA=(1 < 2 < \dots < |\lambda|)$, we simply write $\SYT(\lambda)$. Additionally, let $\SYT^{\mathcal{A}}(r \times c)$ denote the set of standard Young tableaux of rectangular shape with $r$ rows and $c$ columns on the alphabet $\mathcal{A}$.
\begin{example}\label{ex:alphabet} 
Let
\[
T_1=\ytableaushort{12458,367}\quad \text{and} \quad  T_2=\ytableaushort{24678,3 \clubsuit 1}.
\]
Then 
$T_1\in\SYT((5,3))$ is a standard Young tableau of shape $(5,3)$ and alphabet $(1 < \dots < 8)$, while $T_2\in\SYT^{\mathcal{B}}((5,3))$ is a standard Young tableau of shape $(5,3)$ and alphabet $\mathcal{B} = (2 < 3 <4 < 6 <7 <8 < \clubsuit < 1)$.
\end{example}

For $T \in \SYT(\lambda)$, there is an associated \newword{lattice word} $\lattice(T)  = w_1 \dots w_{|\lambda|}$, where $w_i = j$ if the entry $i$ appears in row $j$ of $T$. 
The word $\lattice(T)$ is ``lattice'' in the sense that each initial segment contains at least as many instances of $j$ as of $j+1$, for each $j > 0$; note that every word with this property can be realized as $\lattice(U)$ for some standard tableau $U$. If every letter in a lattice word $\lattice$ appears an equal number of times, we say that $\lattice$ is \newword{balanced}. Note that $\lattice(U)$ is balanced if and only if $U$ is rectangular.

We next define an action on standard Young tableaux called \newword{promotion} \cite{Schutzenberger} via jeu de taquin slides. Further details can be found in, e.g., \cite{Rhoades:thesis, Stanley:PandE}. Given $T\in\SYT^\cA(\lambda)$ with $\cA = (a_1 < \dots < a_n)$ and $\bullet \notin \cA$, we form the promotion $\promotion(T)\in\SYT^\cA(\lambda)$ by doing the following:
\begin{enumerate}
    \item First, delete the unique entry $a_1$ in $T$, which is necessarily in the upper left corner. Call this box $\mathfrak{c}$ and fill it with the symbol $\bullet$.
    \item Now consider the boxes in $\lambda$ that are immediately right or below $\mathfrak{c}$, and denote by $\mathfrak{b}$ that with the smallest entry. Move the entry in $\mathfrak{b}$ into box $\mathfrak{c}$ and move the $\bullet$ into box $\mathfrak{b}$. Continue this procedure by now considering the boxes to the right of and below $\mathfrak{b}$, etc., until the box containing the $\bullet$ does not have any boxes of $\lambda$ to its right or below it. 
    \item Replace each entry $a_i$ with $a_{i-1}$ and replace the $\bullet$ with $a_n$.
\end{enumerate}

It will be useful to consider a minor variant of promotion, which we call \newword{gromotion}. This variant has been implicitly considered before; however, we find it valuable here to give it a distinct name for clarity. Given a finite alphabet $\cA = (a_1 < \dots < a_n)$, define $\grow(\cA)$ to be the finite alphabet $(a_2 < \dots < a_n < a_1)$ on the same underlying set. For $T \in \SYT^\cA(\lambda)$, we form the gromotion $\gromotion(T) \in \SYT^{\grow(\cA)}(\lambda)$ by following steps $(1)$ and $(2)$ of promotion, and then replacing step $(3)$ by the following:
\begin{enumerate}
    \item[$(3^*)$] Replace the $\bullet$ with $a_1$.
\end{enumerate}

\begin{example}\label{ex:nonrect_prom}
 Below we perform promotion on standard Young tableau $T$ with alphabet $\cA=(1<2<\dots < 9)$ to obtain $\promotion(T)\in \SYT(\lambda)$.
 \begin{align*}T=\ytableaushort{1346,259,78}\longrightarrow\ytableaushort{\bullet346,259,78}&\longrightarrow\ytableaushort{2346,\bullet59,78}\longrightarrow\ytableaushort{2346,5\bullet9,78}\longrightarrow\ytableaushort{2346,589,7\bullet}\\ &\longrightarrow\ytableaushort{1235,478,6\bullet}\longrightarrow \ytableaushort{1235,478,69}=\promotion(T)\end{align*}
 If we instead perform gromotion on $T$, we would arrive at $\gromotion(T)\in\SYT^{\grow(\cA)}(\lambda)$.

 \pushQED{\qed}
 \[\gromotion(T)=\ytableaushort{2346,589,71}\]  
 The lattice word for $T$ is $\lattice(T)=121121332$.
\end{example}

\begin{example}\label{ex:rect_prom}
We illustrate promotion on $U\in\SYT(3\times 3)$ with alphabet $\mathcal{B}=(1<2<\ldots<9)$ below, ending with $\promotion(U)\in\SYT(3\times 3)$. The result of gromotion $\gromotion(U)\in\SYT^{\grow(\mathcal{B})}(3\times 3)$ is shown below. We also compute $\lattice(U)=112321323$.
    \begin{align*}
U=\ytableaushort{126,358,479}&\longrightarrow\ytableaushort{\bullet26,358,479}\longrightarrow\ytableaushort{2\bullet6,358,479}\longrightarrow\ytableaushort{256,3\bullet8,479}\longrightarrow\ytableaushort{256,378,4\bullet9}\\ &\longrightarrow\ytableaushort{256,378,49\bullet}\longrightarrow\ytableaushort{145,267,38\bullet}\longrightarrow\ytableaushort{145,267,389}=\promotion({U})
    \end{align*}
    \pushQED{\qed}
    \[\gromotion(U)=\ytableaushort{256,378,491} \qedhere\]
    \let\qed\relax
\end{example}

\begin{definition}\label{def:prom_path}
    We define the \newword{flow path} of $T\in\SYT(\lambda)$ to be the set of boxes of $\lambda$ where the entries of $T$ and $\gromotion(T)$ differ.
\end{definition}
The flow path has in the past sometimes been called the \emph{jeu de taquin path}, the \emph{promotion path}, the \emph{sliding subposet}, or the \emph{Sch\"utzenberger path}. We prefer the phrase ``flow path'' (introduced in \cite{Pechenik:CSP}) in this context because we will be able to use the same name in the increasing tableaux setting in \Cref{sec:increasing}.
Note that by definition of gromotion, the flow path of a standard Young tableau is a maximal chain on the poset whose elements are boxes of $\lambda$ and whose unique minimal element is the upper left box; we will use this perspective in Definition~\ref{def:prom_path_inc} when generalizing to increasing tableaux, where this extra complication becomes necessary.

\begin{example}\label{ex:flowpath}
Below are the flow paths for the tableau $T$ from \Cref{ex:nonrect_prom} and the tableau $U$ from \Cref{ex:rect_prom}.
\[
\begin{ytableau}
*(gray) & & & \\
*(gray) & *(gray) & \\
\ & *(gray)
\end{ytableau}\hspace{1.5in}
\begin{ytableau}
   *(gray) & *(gray) & \\
   \  & *(gray) & \\
   \  & *(gray)& *(gray) \\
\end{ytableau} \qedhere
\]
\end{example}

We now describe the action of tableau promotion on the corresponding lattice words. Let $w = w_1 \ldots w_k$ be a lattice word. An \newword{$i$-balance point} is an index $j$ such that, in the subword $w_1 \ldots w_j$, the number of $i$'s equals the number of $(i+1)$'s. If we think of a standard tableau as a sequence of nested partitions, then an $i$-balance point $j$ corresponds to the $j$th partition having rows $i$ and $i+1$ of equal length.

The rectangular case of the following result is \cite[Proposition~4.9]{Gaetz.Pechenik.Pfannerer.Striker.Swanson:SL4}. 

\begin{proposition}\label{prop:balancing}
    Let $T \in \SYT(\lambda)$ and let $w = w_1 \ldots w_n = \lattice(T)$. Let the bottommost box of the flow path of $T$ lie in row $k + 1$. For $1 \leq i \leq k$, let $j_i \geq 2$ be the unique value such that promotion moves label $j_i$ from row $i+1$ to row $i$. Set $j_0 \coloneqq 1$. Then, for $i \leq k$, we have that $j_i$ is the first $i$-balance point of $w$ after $j_{i-1}$. In particular, the sequence
    \[
    1 < j_1 < j_2 < \dots < j_k
    \]
    is strictly increasing.
\end{proposition}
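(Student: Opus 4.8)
The plan is to reduce the whole statement to a single static observation about $T$ and then induct on the row index $i$.

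\textbf{Reducing to the flow path of $T$.} Promotion and gromotion share the same jeu de taquin phase, and the final relabeling has no effect on \emph{which box} travels between rows, so it suffices to analyze the trajectory of the hole $\bullet$ along the flow path. The key preliminary observation is that this trajectory is determined by $T$ alone: when the hole sits in a box $(i,c)$, its right neighbor $(i,c+1)$ and lower neighbor $(i+1,c)$ have not yet been disturbed, so they still carry the original entries $a_{i,c+1}$ and $a_{i+1,c}$, and the hole moves to the smaller of the two (a missing box counting as $+\infty$). Thus the path steps right from $(i,c)$ when $a_{i,c+1}<a_{i+1,c}$ and steps down when $a_{i+1,c}<a_{i,c+1}$, and the label that promotion moves from row $i+1$ to row $i$ is precisely the entry at the box where the path descends out of row $i$. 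Hence $j_i$ exists if and only if the flow path descends out of row $i$, i.e.\ if and only if $i<m$, where $m$ is the row in which the flow path terminates; this already gives $k=m-1$ with $m\le\ell(\lambda)$.

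\textbf{Translating to balance points.} Encode $T$ by its chain of shapes $\varnothing=\mu^{(0)}\subset\mu^{(1)}\subset\cdots\subset\mu^{(n)}=\lambda$, where $\mu^{(j)}$ is occupied by the entries $\le j$, so that $w_j$ is the row grown at step $j$. Writing $\delta_i(j)$ for the length of row $i$ minus the length of row $i+1$ in $\mu^{(j)}$, we have that $\delta_i(j)$ equals the number of $i$'s minus the number of $(i+1)$'s in $w_1\cdots w_j$; the lattice property gives $\delta_i(j)\ge 0$, and the $i$-balance points are exactly the indices $j$ with $\delta_i(j)=0$.

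\textbf{The induction.} I will show by induction on $i$ that, with $c_0\coloneqq 1$, the hole enters row $i$ at column $c_{i-1}$ at step $j_{i-1}$ with $a_{i,c_{i-1}}=j_{i-1}$, and then descends out of row $i$ at the first $i$-balance point $j_i$ after $j_{i-1}$, at a column $c_i\ge c_{i-1}$ with $a_{i+1,c_i}=j_i$. The crucial point, and the main obstacle, is the strict inequality $\delta_i(j_{i-1})>0$: one must rule out that rows $i$ and $i+1$ are already balanced when the hole arrives, which would let it descend immediately with a too-small label and destroy the increasing order. This is exactly where standardness is essential: the balancing box added at step $j_{i-1}$ is $(i,c_{i-1})$, and the box $(i+1,c_{i-1})$ directly below it carries a strictly larger entry, hence is filled strictly later, so row $i+1$ is strictly shorter than row $i$ at step $j_{i-1}$. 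Since $j_i$ is the first zero of $\delta_i$ after $j_{i-1}$, this yields $\delta_i(j)>0$ for all $j_{i-1}\le j<j_i$.

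\textbf{Closing the step.} Feeding the monotonicity back into the comparison rule finishes the argument. For each column $c_{i-1}\le c'<c_i$, the box $(i+1,c')$ is filled at a step strictly between $j_{i-1}$ and $j_i$ where $\delta_i>0$, so at that moment row $i$ is already strictly longer, giving $a_{i,c'+1}<a_{i+1,c'}$ and a rightward step; at column $c_i$ the balancing box $(i+1,c_i)$ satisfies $a_{i+1,c_i}=j_i<a_{i,c_i+1}$ (as row $i$ has length exactly $c_i$ at step $j_i$), so the hole steps down, carrying the label $j_i$ up from row $i+1$ to row $i$. If instead no $i$-balance point follows $j_{i-1}$, the identical estimate gives $\delta_i>0$ to the end, the hole runs off the right end of row $i$ at an outer corner, and $j_i$ does not exist, so $m=i$. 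Finally, strict monotonicity $1=j_0<j_1<\cdots<j_k$ is automatic from $j_i=a_{i+1,c_i}>a_{i,c_i}\ge a_{i,c_{i-1}}=j_{i-1}$, using column- and row-strictness together with $c_i\ge c_{i-1}$. This establishes that $j_i$ exists exactly for $i\le k=m-1\le\ell(\lambda)-1$ and that each $j_i$ is the first $i$-balance point after $j_{i-1}$.
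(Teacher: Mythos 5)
Your proof is correct and takes essentially the same route as the paper's, which simply asserts the claim is ``straightforward from looking at the flow path''; you supply the details the paper leaves implicit (the rightward-step criterion $a_{i,c'+1}<a_{i+1,c'}$ via undisturbed neighbors, the strict positivity of $\delta_i$ on $[j_{i-1},j_i)$, and the column-strictness argument giving $\delta_i(j_{i-1})>0$). Note that your conclusion $k=m-1\le\ell(\lambda)-1$ is the correct bound: the statement's ``$k<\ell(\lambda)-1$'' is evidently a typo for ``$k\le\ell(\lambda)-1$'', since the paper's own \Cref{ex:balance} exhibits $k=\ell(\lambda)-1$.
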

\begin{proof}
    This is straightforward from looking at the flow path of promotion. We have that $j_i$ is the entry of $T$ such that 
    \begin{itemize}
        \item $j_i$ appears in row $i+1$ of the flow path of $T$, and
        \item the box immediately above is also in the flow path of $T$. \qedhere
    \end{itemize} 
\end{proof}

\begin{example}\label{ex:balance}
 In \Cref{ex:rect_prom}, we have that $\lattice(U)=112321323$. We see that the first $1$-balance point is $5$ because the initial subword $11232$ has an equal number of $1$s and $2$s. We also see that $5$ is the label that moves from row $2$ to row $1$ when promoting $T$. The first $2$-balance point after $5$ is $7$ because $1123213$ is the first initial subword of $\lattice(U)$ of length greater than $5$ that has an equal number of $2$s and $3$s. We also see that entry $7$ moves from the third row to the second row when promoting $T$. Note that here $k=\ell(\lambda)-1=2$ because the flow path reaches the bottom row of the tableau (as indeed always happens for $\lambda$ rectangular); for an example where $k < \ell(\lambda) - 1$, see \Cref{ex:lattice_words_in_promotion} or consider the tableau $T_1$ from \Cref{ex:alphabet}.
\end{example}

The rectangular case of the following result is a special case of \cite[Proposition~4.10]{Gaetz.Pechenik.Pfannerer.Striker.Swanson:SL4} (and in different language \cite[Prop.~8.22]{Gaetz.Pechenik.Pfannerer.Striker.Swanson:fluctuating}).  We refer to it as the ``first balance point characterization'' of promotion, which is often implicitly used in the literature (see e.g.~\cite{Patrias:webs,Petersen.Pylyavskyy.Rhoades,Pfannerer.Rubey.Westbury,Tymoczko}). As this characterization is extremely useful, we take this opportunity to put a clear statement in the literature. This characterization will also be used in the proof of \Cref{prop:2rowrectdigraphs}.

\begin{proposition}\label{prop:promotion_via_balance_points}
    Let $T \in \SYT(\lambda)$ and let $w = w_1 \ldots w_n = \lattice(T)$. For $0 \leq i \leq k$, let $j_i$ be as in \Cref{prop:balancing}. Then $\lattice(\promotion(T))$ is obtained from $w$ by decrementing each entry $w_{j_i}$ by one, deleting the first letter of $w$, and appending a $k+1$ to the end.
\end{proposition}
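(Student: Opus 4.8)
The plan is to track the lattice word through the three steps of promotion, using the flow-path description already established in \Cref{prop:balancing}. Recall that the lattice word $\lattice(T) = w_1 \dots w_n$ records in position $\ell$ the row of the box holding entry $\ell$. The key observation is that promotion changes the row of an entry exactly along the flow path, and \Cref{prop:balancing} has already identified precisely which entries change rows: for each $0 \le i \le k$, the entry $j_i$ moves from row $i+1$ to row $i$ (with $j_0 = 1$ being the deleted corner). So I would first argue that, before the relabeling step (3), the effect of the jeu de taquin slide on the lattice word is exactly to decrement each $w_{j_i}$ by one, since moving entry $j_i$ from row $i+1$ to row $i$ replaces the letter $i+1$ in position $j_i$ by the letter $i$. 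Every box off the flow path keeps its entry and hence its row, so no other position of the word changes.

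Next I would handle the relabeling in step (3). Decrementing every entry $a_\ell \mapsto a_{\ell-1}$ shifts the label sequence so that what was in position $\ell$ now describes entry $\ell - 1$; concretely, the corner entry $a_1$ has been deleted, so the word loses its first letter, and all remaining positions shift left by one. This accounts for the ``deleting the first letter of $w$'' operation in the statement. I should be careful that this left-shift is consistent with the decrementing of the $w_{j_i}$: since both operations act on positions rather than on the row-values themselves, and decrementing a row-value commutes with re-indexing the positions, the two operations can be performed in either order, matching the phrasing of the proposition.

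Finally I would treat the newly created box, where the $\bullet$ ends up. The flow path is a maximal chain starting at the upper-left corner, and by \Cref{prop:balancing} it descends through rows $1, 2, \dots, k+1$, terminating in row $k+1$ (this is exactly what the definition of $k$ encodes: $j_i$ exists for $i \le k$ but not beyond). The $\bullet$ thus lands in a box of row $k+1$, and after step (3) it receives the largest label $a_n$, i.e.\ the entry $n$ in the new tableau. Hence the lattice word of $\promotion(T)$ has a $k+1$ in its final position, which gives the ``appending a $k+1$'' operation.

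I expect the main obstacle to be the bookkeeping around the endpoint of the flow path, specifically verifying that the terminal box of the slide lies in row $k+1$ rather than in some higher or lower row. This requires knowing that the flow path genuinely stops descending exactly after the last balance point $j_k$: the slide continues as long as there is a smaller neighbor below, and the balance-point condition of \Cref{prop:balancing} controls precisely when the path turns from going down to going right. I would argue this by combining the characterization of $j_i$ as the first $i$-balance point after $j_{i-1}$ with the fact that the absence of a $(k+1)$-balance point after $j_k$ means no further downward move is forced, so the $\bullet$ settles into row $k+1$. Once this is pinned down, the remaining steps are routine verifications that the three word-operations correspond exactly to the three tableau steps.
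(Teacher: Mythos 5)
Your proposal is correct and follows essentially the same route as the paper: the paper's proof is a one-line appeal to \Cref{prop:balancing} and the definition of promotion, noting that the appended $k+1$ records the row in which the flow path terminates (immediate from the nonexistence of $j_{k+1}$). Your write-up simply fills in the same bookkeeping in more detail, including the correct resolution of your flagged ``obstacle.''
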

\begin{proof}
    This is straightforward from \Cref{prop:balancing} and the definition of promotion. The appending of $k+1$ reflects that the flow path ended in row $k+1$. 
\end{proof}

\begin{example}\label{ex:lattice_words_in_promotion}
We continue our running example from \Cref{ex:balance}. To find  $\lattice(\promotion(U))$ from $\lattice(U)$, we first decrement $w_5$ and $w_7$, since $5$ is the first $1$-balance point and $7$ is the first $2$-balance point after $5$. (These balance points are shown in bold.) We then delete the $1$ at the beginning and append a $3$, since $k=2$ in the sequence $1<j_1<\cdots<j_k$ of balance points.
 \begin{align*}
     \lattice(U)&=1\ 1\ 2\ 3\ \mathbf{2}\ 1\ \mathbf{3}\ 2\ 3
     \\
     \lattice(\promotion(U))&=\ \,  \ 1 \ 2\ 3\ \mathbf{1}\ 1\ \mathbf{2}\ 2\ 3\ 3
 \end{align*}

We similarly illustrate the first-balance-point computation of several iterations of promotion with our tableau $T$ from \Cref{ex:nonrect_prom}. Note that the last letter of a lattice word is the number of bold entries of the prior lattice word plus one. 
\begin{align*}
    \lattice(T)&= 1\ \mathbf{2}\ 1\ 1\ 2\ 1\ 3\ \mathbf{3}\ 2\\
  \lattice(\promotion(T))&=\ \ \,  1\ 1\ 1\ 2\ 1\ 3\ 2\ 2\ 3\\ \lattice(\promotion^2(T))&=\ \ \, \ \ \,  1\ 1\ 2\ 1\ 3\ 2\ \mathbf{2}\ 3\ 1\\
\lattice(\promotion^3(T))&=\ \ \, \ \ \, \ \ \,  1\ \mathbf{2}\ 1\ \mathbf{3}\ 2\ 1\ 3\ 1\ 2\\
\lattice(\promotion^4(T))&=\ \ \, \ \ \, \ \ \, \ \ \, 1\ 1\ 2\ \mathbf{2}\ 1\ 3\ 1\ 2\ \mathbf{3}\\
\vdots \ \ \, \ \ &= \ \ \, \ \ \, \ \ \, \ \ \, \ \ \, \ \ \, \ \ \, \vdots \qedhere 
\end{align*} 
\end{example}

\subsection{Properties of promotion digraphs}\label{sec:standard_prom_digraph}

The following is derived from the standard tableau case of \cite[Proposition~6.9]{Gaetz.Pechenik.Pfannerer.Striker.Swanson:fluctuating}, which we modify to take as our definition. For a summary of the results on promotion digraphs proved in this and the next two subsections, see \Cref{thm:std_digraph_summary}.

\begin{definition}\label{def:promotion_digraph}
    Let $T \in \SYT(\lambda)$ with $|\lambda|=n$. For $i\geq 1$, we construct a directed graph $\prom_i(T)$ with vertex set $[n]$. Suppose that $\beta$ moves from row $i+1$ to $i$ in the application of gromotion to $\gromotion^{\alpha-1}(T)$. Then we draw an arrow $\alpha \to \beta$ in $\prom_i(T)$. We call $\prom_i(T)$ the \newword{$i$th promotion digraph} of the tableau $T$.
\end{definition}

Note that for $i \geq \ell(\lambda)$, \Cref{def:promotion_digraph} defines a graph that  has no edges.
Using gromotion instead of promotion simplifies the bookkeeping in \Cref{def:promotion_digraph}. If we instead used promotion, we would have an arrow $\alpha\to \alpha+\beta-1$ when $\beta$ moves from row $i+1$ to row $i$ in the application of promotion to $\promotion^{\alpha-1}(T)$; see \cite[Proposition~6.9]{Gaetz.Pechenik.Pfannerer.Striker.Swanson:fluctuating}.

\begin{example}\label{ex:nonrect_digraph}
    For $T$ the tableau from \Cref{ex:nonrect_prom}, we have that 
    \[\prom_1(T) = \raisebox{-1.1cm}{\begin{tikzpicture}[node distance=1cm]
    \node (1) at (0, 2) {1};
    \node (9) at (-1, 2) {9};
    \node (2) at (1, 2) {2};
    \node (3) at (-2, 2) {3};
    \node (6) at (-1, 1) {6};
    \node (7) at (0, 1) {7};
    \node (4) at (-1, 0) {4};
    \node (5) at (0, 0) {5};
    \node (8) at (1, 0) {8};
    \draw[->, thick] (3) -- (9);
    \draw[->, thick] (9) -- (1);
    \draw[->, thick] (1) -- (2);
    \draw[->, thick] (6) -- (7);
    \draw[->, thick] (4) -- (5);
    \draw[<->, thick] (5) -- (8); 
\end{tikzpicture}}
\quad \text{and} \quad
\prom_2(T) = \raisebox{-1.1cm}{\begin{tikzpicture}[node distance=1cm]
    \node (1) at (0, 2) {1};
    \node (6) at (-1, 2) {6};
    \node (8) at (1, 2) {8};
    \node (9) at (-2, 2) {9};
    \node (4) at (-1, 1) {4};
    \node (7) at (0, 1) {7};
    \node (2) at (1, 1) {2};
    \node (3) at (-1, 0) {3};
    \node (5) at (0, 0) {5};
    \draw[->, thick] (9) -- (6);
    \draw[->, thick] (6) -- (1);
    \draw[->, thick] (1) -- (8);
    \draw[->, thick] (4) -- (7);
\end{tikzpicture}}
\]

Below is the gromotion orbit used to compute these promotion digraphs above. Note that we keep track of the alphabet for each tableau by considering all bold numbers to be larger than all non-bold numbers. 
\pushQED{\qed}
\ytableausetup{centertableaux}
\begin{align*}\ytableaushort{1346,259,78}&\rightarrow\ytableaushort{2346,589,7{\mathbf{1}}}\rightarrow\ytableaushort{346{\mathbf{2}},589,7{\mathbf{1}}}\rightarrow\ytableaushort{469{\mathbf{2}},58{\mathbf{3}},7{\mathbf{1}}}\rightarrow\ytableaushort{569{\mathbf{2}},78{\mathbf{3}},{\mathbf{1}}{\mathbf{4}}}\rightarrow\ytableaushort{689{\mathbf{2}},7{\mathbf{3}}{\mathbf{5}},{\mathbf{1}}{\mathbf{4}}}\\
    &\rightarrow\ytableaushort{789{\mathbf{2}},{\mathbf{1}}{\mathbf{3}}{\mathbf{5}},{\mathbf{4}}{\mathbf{6}}}\rightarrow\ytableaushort{89{\mathbf{2}}{\mathbf{7}},{\mathbf{1}}{\mathbf{3}}{\mathbf{5}},{\mathbf{4}}{\mathbf{6}}}\rightarrow \ytableaushort{9{\mathbf{2}}{\mathbf{5}}{\mathbf{7}},{\mathbf{1}}{\mathbf{3}}{\mathbf{8}},{\mathbf{4}}{\mathbf{6}}}\rightarrow \ytableaushort{{\mathbf{1}}{\mathbf{2}}{\mathbf{5}}{\mathbf{7}},{\mathbf{3}}{\mathbf{6}}{\mathbf{8}},{\mathbf{4}}{\mathbf{9}}} \qedhere\end{align*}
\end{example}

\begin{example}\label{ex:rect_digraph}
    For $U$ the rectangular tableau from \Cref{ex:rect_prom}, we have that 
    \[
    \prom_1(U) = 
\raisebox{-2.1cm}{\begin{tikzpicture}[scale=2, every node/.style={minimum size=10mm}]
    \node (1) at (90:1) {1};   
    \node (5) at (30:1) {5};  
    \node (8) at (-30:1) {8};  
    \node (9) at (-90:1) {9};  
    \node (6) at (-150:1) {6}; 
    \node (7) at (150:1) {7};  
    \node (2) at (0,-0.4) {2};
    \node (3) at (-0.43, 0.3) {3};
    \node (4) at (0.43, 0.3) {4};
    \draw[->, thick] (1) -- (5);
    \draw[->, thick] (5) -- (8);
    \draw[->, thick] (8) -- (9);
    \draw[->, thick] (9) -- (6);
    \draw[->, thick] (6) -- (7);
    \draw[->, thick] (7) -- (1);
    \draw[->, thick] (2) -- (3);
    \draw[->, thick] (3) -- (4);
    \draw[->, thick] (4) -- (2);
\end{tikzpicture}}
    \quad \text{and} \quad
    \prom_2(U) = 
    \raisebox{-2.1cm}{\begin{tikzpicture}[scale=2, every node/.style={minimum size=10mm}]
  \node (1) at (90:1) {1};   
    \node (5) at (30:1) {5};  
    \node (8) at (-30:1) {8};  
    \node (9) at (-90:1) {9};  
    \node (6) at (-150:1) {6}; 
    \node (7) at (150:1) {7};  
    \node (2) at (0,-0.4) {2};
    \node (3) at (-0.43, 0.3) {3};
    \node (4) at (0.43, 0.3) {4};
    \draw[<-, thick] (1) -- (5);
    \draw[<-, thick] (5) -- (8);
    \draw[<-, thick] (8) -- (9);
    \draw[<-, thick] (9) -- (6);
    \draw[<-, thick] (6) -- (7);
    \draw[<-, thick] (7) -- (1);
    \draw[<-, thick] (2) -- (3);
    \draw[<-, thick] (3) -- (4);
    \draw[<-, thick] (4) -- (2);
\end{tikzpicture}}
    \]
\pushQED{\qed}

    \begin{align*}
    \ytableausetup{centertableaux}
\ytableaushort{126,358,479}&\rightarrow\ytableaushort{256,378,49{\mathbf{1}}}\rightarrow\ytableaushort{356,478,9{\mathbf{1}}{\mathbf{2}}}\rightarrow\ytableaushort{456,78{\mathbf{2}},9{\mathbf{1}}{\mathbf{3}}}\rightarrow\ytableaushort{56{\mathbf{2}},78{\mathbf{3}},9{\mathbf{1}}{\mathbf{4}}}\\ &\rightarrow\ytableaushort{68{\mathbf{2}},7{\mathbf{1}}{\mathbf{3}},9{\mathbf{4}}{\mathbf{5}}}\rightarrow\ytableaushort{78{\mathbf{2}},9{\mathbf{1}}{\mathbf{3}},{\mathbf{4}}{\mathbf{5}}{\mathbf{6}}}\rightarrow\ytableaushort{8{\mathbf{1}}{\mathbf{2}},9{\mathbf{3}}{\mathbf{6}},{\mathbf{4}}{\mathbf{5}}{\mathbf{7}}}\rightarrow\ytableaushort{9{\mathbf{1}}{\mathbf{2}},{\mathbf{3}}{\mathbf{5}}{\mathbf{6}},{\mathbf{4}}{\mathbf{7}}{\mathbf{8}}}\rightarrow\ytableaushort{{\mathbf{1}}{\mathbf{2}}{\mathbf{6}},{\mathbf{3}}{\mathbf{5}}{\mathbf{8}},{\mathbf{4}}{\mathbf{7}}{\mathbf{9}}} \qedhere
    \end{align*}
\end{example}
\ytableausetup{nocentertableaux}

We now describe some properties of promotion digraphs.

\begin{proposition}\label{prop:no_loops}
 Let $T \in \SYT(\lambda)$. Then the digraph $\prom_i(T)$ has no loops, and each vertex has outdegree either $0$ or $1$.

\end{proposition}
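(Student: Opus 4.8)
The plan is to treat the two assertions separately, reducing both to the monotone structure of the flow path together with careful bookkeeping of the alphabet under iterated gromotion. Throughout I write $\cA = (1 < 2 < \cdots < n)$ for the standard alphabet of $T \in \SYT(\lambda)$.

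For the outdegree bound, I would first note that gromotion and promotion perform the same jeu de taquin slides in steps $(1)$--$(2)$, differing only in the relabeling step. Thus applying gromotion to $\gromotion^{\alpha-1}(T)$ slides the $\bullet$ along the flow path of $\gromotion^{\alpha-1}(T)$, which by the note following \Cref{def:prom_path} is a maximal chain in the box poset of $\lambda$. Such a chain advances only by rightward and downward cover steps, so it crosses the boundary between rows $i$ and $i+1$ at most once, and each downward crossing lifts exactly one entry from row $i+1$ into row $i$. Hence a single gromotion moves at most one label $\beta$ from row $i+1$ to row $i$; this is exactly the uniqueness of $j_i$ in \Cref{prop:balancing}, whose argument applies verbatim here since the underlying slides agree. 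Since by \Cref{def:promotion_digraph} the edges leaving $\alpha$ in $\prom_i(T)$ are precisely the arrows recording such moves, vertex $\alpha$ has outdegree $0$ or $1$.

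For the absence of loops, the crux is to pin down the value deleted at the start of the gromotion that produces the edges out of $\alpha$. Since $\gromotion^{\alpha-1}(T)$ has alphabet $\grow^{\alpha-1}(\cA) = (\alpha < \alpha+1 < \cdots < n < 1 < \cdots < \alpha-1)$, its minimum is $\alpha$, so step $(1)$ deletes $\alpha$ from the upper-left corner, i.e.\ from row $1$; the value $\alpha$ then travels down the flow path and is reinserted at its bottom by step $(3^*)$. On the other hand, any label $\beta$ recorded by an edge $\alpha \to \beta$ begins in row $i+1$, and $i \geq 1$ forces $i+1 \geq 2$. Because $\gromotion^{\alpha-1}(T)$ is a bijective filling, its row-$1$ entry $\alpha$ differs from every entry in row $i+1$, so $\beta \neq \alpha$ and there is no loop at $\alpha$.

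The one point requiring care is the alphabet computation showing that the entry deleted by the $\alpha$-th gromotion is exactly $\alpha$; once this is in hand, everything else follows from the monotonicity of the flow path and the bijectivity of the filling. I expect this to be the main (though modest) obstacle, and I would settle it by a short induction on $\alpha$ using $\grow(a_1 < \cdots < a_n) = (a_2 < \cdots < a_n < a_1)$, which advances the minimum from $a_1$ to $a_2$ at each application.
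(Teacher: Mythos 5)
Your proof is correct and follows essentially the same route as the paper's: the outdegree bound comes from the flow path being a maximal chain, so it crosses the boundary between rows $i$ and $i+1$ at most once, and the absence of loops comes from observing that the $\alpha$th gromotion deletes the label $\alpha$ from row $1$, so $\alpha$ cannot be the label moving up from row $i+1$. The only difference is that you spell out the alphabet bookkeeping (that the minimum of $\grow^{\alpha-1}(\cA)$ is $\alpha$) which the paper leaves implicit.
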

\begin{proof}
    In the construction of promotion digraphs,  during the $j$th application of gromotion, there is either a unique element of the alphabet that moves from row $i+1$ to $i$ or there is no such element. Hence, there is at most one arrow in $\prom_i(T)$ with tail at vertex $j$. If this arrow exists, its head cannot also be at vertex $j$, for at the $j$th application of gromotion, the tableau has label $j$ in its upper left corner; clearly, the label $j$ cannot move from row $i+1$ to row $i$, since it is in row $1$.
\end{proof}

\begin{corollary} 
\label{cor:std_fixed_pt_free}
For $T\in \SYT(\lambda)$ with $|\lambda| = n$, the promotion digraph
$\prom_i(T)$ is a fixed-point-free partial function $\prom_i(T) : [n] \to [n]$.  \qed
\end{corollary}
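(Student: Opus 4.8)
The plan is to read the statement directly off of \Cref{prop:no_loops}, which already does all of the substantive work; the corollary merely translates its two conclusions into the vocabulary of partial functions. Recall that a digraph on vertex set $[n]$ in which every vertex has outdegree at most $1$ is precisely the functional digraph of a partial function $f \colon [n] \to [n]$: for each $\alpha \in [n]$ one sets $f(\alpha) = \beta$ when the unique edge with tail $\alpha$ has head $\beta$, and leaves $f(\alpha)$ undefined when $\alpha$ has outdegree $0$. So the task is just to verify that $\prom_i(T)$ meets the two graph-theoretic hypotheses needed for this dictionary and that the resulting function is fixed-point-free.

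First I would invoke the outdegree conclusion of \Cref{prop:no_loops}, which guarantees that every vertex of $\prom_i(T)$ has outdegree $0$ or $1$. This is exactly what is required for the edge relation to define a partial function $\prom_i(T) \colon [n] \to [n]$ (rather than a multivalued relation). Then I would invoke the no-loops conclusion of the same proposition: since $\prom_i(T)$ has no edge of the form $\alpha \to \alpha$, we have $\prom_i(T)(\alpha) \neq \alpha$ wherever the function is defined, which is precisely the assertion that it is fixed-point-free.

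There is essentially no obstacle to overcome here; the corollary is a restatement of \Cref{prop:no_loops} under the standard correspondence between bounded-outdegree digraphs and partial functions. The only point to take care with is matching the two graph conditions (outdegree at most $1$, and absence of loops) to the two function-theoretic conditions (being a well-defined partial function, and having no fixed points), so that nothing is silently assumed beyond what \Cref{prop:no_loops} supplies.
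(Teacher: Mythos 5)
Your proposal is correct and matches the paper exactly: the corollary is stated with an immediate \qed precisely because it is the translation of the two conclusions of \Cref{prop:no_loops} (outdegree at most one, no loops) into the language of fixed-point-free partial functions, which is just the dictionary you describe.
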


We have the following relation between the promotion digraph of $T$ and the promotion digraphs of $\promotion(T)$, which tells us partial information about how to create $\prom_i(\promotion(T))$ from $\prom_i(T)$.

\begin{lemma}[{\cite[Lemma~6.4]{Gaetz.Pechenik.Pfannerer.Striker.Swanson:fluctuating}}]\label{lem:prom.P}
  Let $T \in \SYT(\lambda)$ with $|\lambda| = n$. Then we have
    \[ \prom_i(T)(\alpha+1) = \beta+1 \qquad\Longleftrightarrow\qquad \prom_i(\promotion(T))(\alpha) = \beta \]
  for all $1 \leq \alpha,\beta \leq n-1$ and all $i \geq 1$. 
\end{lemma}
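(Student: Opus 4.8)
The plan is to realize the two gromotion orbits---that of $T$ and that of $\promotion(T)$---as essentially one orbit, related by a cyclic relabeling together with a one-step shift, and then to transport the ``which entry crosses from row $i+1$ to row $i$'' data across. First I would record the precise relationship between the two operators. By construction $\promotion$ and $\gromotion$ perform the identical jeu de taquin in steps $(1)$--$(2)$ and differ only in the final relabeling; comparing step $(3)$ with step $(3^*)$ shows that $\promotion(T)$ is exactly the standardization of $\gromotion(T)$ with respect to its alphabet $\grow([n])$. Concretely, writing $c\colon [n]\to[n]$ for the cyclic shift with $c(v)=v+1$ for $v\le n-1$ and $c(n)=1$, one gets $\gromotion(T)=c\cdot\promotion(T)$, meaning the filling of $\gromotion(T)$ is obtained from that of $\promotion(T)$ by applying $c$ to every entry. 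The reason $c$ is the right map is that $\grow^{m}([n])$ is the cyclic order $(m{+}1<\cdots<n<1<\cdots<m)$, so $c$ is an order-isomorphism from $\grow^{m}([n])$ to $\grow^{m+1}([n])$ for every $m$.

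Next I would propagate this identity along the whole orbit. Let $F_m$ and $G_m$ denote the fillings underlying $\gromotion^m(T)$ and $\gromotion^m(\promotion(T))$, each carrying the alphabet $\grow^m([n])$. I claim $F_{m+1}=c\cdot G_m$ for all $m\ge 0$, the base case $m=0$ being the identity from the previous step. For the inductive step, the key is that gromotion commutes with order-isomorphisms of alphabets: it is defined purely in terms of the order, so relabeling the entries by an order-isomorphism relabels the output by the same map (grow changes only the order, not the underlying relabeling bijection). Since $c$ is an order-isomorphism $\grow^{m}([n])\to\grow^{m+1}([n])$ carrying $G_m$ to $F_{m+1}$, applying gromotion to both sides yields $F_{m+2}=c\cdot G_{m+1}$, using that $c$ is again the intertwining order-isomorphism $\grow^{m+1}([n])\to\grow^{m+2}([n])$.

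Finally I would read off the lemma. An edge $\alpha\to\beta$ in $\prom_i(S)$ records that entry $\beta$ crosses from row $i+1$ to row $i$ when gromotion is applied to $\gromotion^{\alpha-1}(S)$; since the jeu de taquin depends only on the order, the identity $F_\alpha=c\cdot G_{\alpha-1}$ together with $c$ being an order-isomorphism of the relevant alphabets shows that the entry crossing in the $(\alpha{+}1)$-st gromotion step of the $T$-orbit is exactly $c$ applied to the entry crossing in the $\alpha$-th gromotion step of the $\promotion(T)$-orbit. Thus $\prom_i(\promotion(T))(\alpha)=\beta$ if and only if $\prom_i(T)(\alpha+1)=c(\beta)$, and for $1\le\beta\le n-1$ we have $c(\beta)=\beta+1$, which is the asserted equivalence; the restriction $\beta\le n-1$ is precisely what avoids the wrap-around $c(n)=1$, and the range $\alpha\le n-1$ is just the common domain of the two gromotion steps being compared. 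I expect the main obstacle to be the bookkeeping in the middle step---pinning down the alphabets $\grow^m([n])$ and verifying that $c$ is the correct order-isomorphism intertwining consecutive gromotions---rather than any conceptual difficulty; once that commutation is in hand, the equivalence follows immediately from the fact that $\prom_i$ is a partial function and $c$ is a bijection.
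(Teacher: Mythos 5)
Your argument is correct. Note that the paper does not actually prove this lemma --- it imports it by citation from \cite[Lemma~6.4]{Gaetz.Pechenik.Pfannerer.Striker.Swanson:fluctuating} --- so there is no internal proof to compare against; what you have written is a valid self-contained derivation. The two pillars of your proof both hold up: (i) comparing step $(3)$ with step $(3^*)$ does give $\gromotion(T) = c \cdot \promotion(T)$ for the cyclic shift $c$, and $c$ is indeed an order-isomorphism $\grow^{m}([n]) \to \grow^{m+1}([n])$ for every $m$; (ii) gromotion is defined purely in terms of the order on the alphabet (delete the minimum, slide by comparing entries, refill with the old minimum), so it commutes with order-isomorphisms, which gives the induction $F_{m+1} = c \cdot G_m$ along the whole orbit. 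Reading off which entry crosses from row $i+1$ to row $i$ then transports correctly because, for standard tableaux, at most one entry crosses that boundary per gromotion step (the flow path is a chain), so $\prom_i$ is a partial function and your equivalence, including the undefined-on-both-sides case, follows. In fact your argument proves slightly more than the stated lemma: it yields the full relation $\prom_i(T)(\alpha+1) = c\bigl(\prom_i(\promotion(T))(\alpha)\bigr)$ for $1 \leq \alpha \leq n-1$, including the wrap-around value $\beta = n$ (where $c(n)=1$), which is consistent with the paper's subsequent example and with \Cref{cor:rect_rotate}; only the case $\alpha = n$ versus $\alpha+1 = 1$ is genuinely outside the reach of this orbit comparison, matching the paper's remark that the rectangular hypothesis is needed there.
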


\begin{example}
    Let $T$ be the tableau from \Cref{ex:nonrect_digraph}. Then we can compute that
     \[\prom_1(\promotion(T)) = \raisebox{-1.1cm}{\begin{tikzpicture}[node distance=1cm]
    \node (1) at (0, 2) {9};
    \node (9) at (-1, 2) {8};
    \node (2) at (1, 2) {1};
    \node (3) at (-2, 2) {2};
    \node (6) at (-1, 1) {5};
    \node (7) at (0, 1) {6};
    \node (4) at (-1, 0) {3};
    \node (5) at (0, 0) {4};
    \node (8) at (1, 0) {7};
    \draw[->, thick] (3) -- (9);
    \draw[->, thick] (9) -- (1);
    \draw[->, thick] (6) -- (7);
    \draw[->, thick] (4) -- (5);
    \draw[<->, thick] (5) -- (8); 
\end{tikzpicture}}
\quad \text{and} \quad
\prom_2(\promotion(T)) = \raisebox{-1.1cm}{\begin{tikzpicture}[node distance=1cm]
    \node (1) at (0, 2) {9};
    \node (6) at (-1, 2) {5};
    \node (8) at (1, 2) {7};
    \node (9) at (-2, 2) {8};
    \node (4) at (-1, 1) {3};
    \node (7) at (0, 1) {6};
    \node (2) at (1, 1) {1};
    \node (3) at (-1, 0) {2};
    \node (5) at (0, 0) {4};
    \draw[->, thick] (9) -- (6);
    \draw[->, thick] (6) -- (1);
    \draw[->, thick] (4) -- (7);
\end{tikzpicture}}, 
\]
while we also have
    \[\prom_1(\promotion^2(T)) = \raisebox{-1.1cm}{\begin{tikzpicture}[node distance=1cm]
    \node (1) at (0, 2) {8};
    \node (9) at (-1, 2) {7};
    \node (2) at (1, 2) {9};
    \node (3) at (-2, 2) {1};
    \node (6) at (-1, 1) {4};
    \node (7) at (0, 1) {5};
    \node (4) at (-1, 0) {2};
    \node (5) at (0, 0) {3};
    \node (8) at (1, 0) {6};
    \draw[->, thick] (3) -- (9);
    \draw[->, thick] (9) -- (1);
     \draw[->, thick] (2.north) to [out=150,in=30] (3.north);
    \draw[->, thick] (6) -- (7);
    \draw[->, thick] (4) -- (5);
    \draw[<->, thick] (5) -- (8); 
\end{tikzpicture}}
\quad \text{and} \quad
\prom_2(\promotion^2(T)) = \raisebox{-1.1cm}{\begin{tikzpicture}[node distance=1cm]
    \node (1) at (0, 2) {8};
    \node (6) at (-1, 2) {4};
    \node (8) at (1, 2) {6};
    \node (9) at (-2, 2) {7};
    \node (4) at (-1, 1) {2};
    \node (7) at (0, 1) {5};
    \node (2) at (1, 1) {9};
    \node (3) at (-1, 0) {1};
    \node (5) at (0, 0) {3};
    \draw[->, thick] (9) -- (6);
    \draw[->, thick] (6) -- (1);
     \draw[->, thick] (2.north) to [out=150,in=30] (4.north);
    \draw[->, thick] (4) -- (7);
\end{tikzpicture}}.
\]
The reader should compare these digraphs to those in \Cref{ex:nonrect_digraph}. 
\end{example}

\begin{remark}
\Cref{lem:prom.P} says that we decrement all vertex labels and keep the outgoing arrows from all vertices except the vertex labeled $1$ before decrementing; the lemma does not say anything about what outgoing arrows there should be from this vertex in the new digraph. 
    Note that \Cref{lem:prom.P} does not make any claims in the case where $\alpha$ or $\beta$ equals $n$; see the above example. We will see in \Cref{cor:rect_rotate} (where $n = cr$) that whenever $T$ is rectangular, we have an analogous property for $\alpha$ or $\beta$ equaling $n$. This gives us complete information about how to construct $\prom_i(\promotion(T))$ from $\prom_i(T)$ for rectangular $T$.
\end{remark}

We now describe properties of promotion digraphs that determine the entries of the corresponding tableau.

\begin{definition}\label{def:excedance}
    Let $\mathcal{D}$ be a digraph on vertex set $[n]$. A number $j \in [n]$ is an \newword{excedance} of $\mathcal{D}$ if there is an edge $i \to j$ for some $i < j$; similarly, $j$ is an \newword{antiexcedance} if there is an edge $i \to j$ for some $i > j$. We say $j$ is a \newword{nonexcedance} if it is not an excedance. 
\end{definition} 

Note that a value $j$ can be simultaneously an excedance and an antiexcedance.

\begin{theorem}
\label{thm:antiexcedance}
If $T \in \SYT(\lambda)$ and $i \geq 1$, 
then the nonexcedances of $\prom_i(T)$ are exactly the numbers in the first $i$ rows of $T$.
\end{theorem}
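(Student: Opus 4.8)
The plan is to fix a value $\beta \in [n]$ and follow the entire trajectory of the box containing $\beta$ through the gromotion orbit $T, \gromotion(T), \gromotion^2(T), \dots$, reading off from this trajectory exactly which edges point into $\beta$ in each $\prom_i(T)$.

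First I would record the bookkeeping of gromotion, tracking the alphabet as in \Cref{ex:nonrect_digraph}. The alphabet of $\gromotion^{\alpha-1}(T)$ is $\grow^{\alpha-1}(\cA)$, whose least element is the actual value $\alpha$ (for $1 \le \alpha \le n$); thus at the $\alpha$th step it is precisely the value $\alpha$ that is deleted from the upper-left corner and recycled to the end of the flow path. Next I would observe that in steps (1)--(2) of gromotion every entry lying on the flow path is moved one box backward (up or left) along that path, while every entry off the flow path stays fixed. In particular, a value moves from row $i+1$ to row $i$ during a gromotion step if and only if it sits on a \emph{vertical} step of that flow path, and by \Cref{def:promotion_digraph} this is exactly the event producing an edge into that value in $\prom_i(T)$.

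The heart of the argument is to follow $\beta$ through the steps $\alpha = 1, 2, \dots, \beta-1$, i.e.\ up to but not including the step at which $\beta$ is itself recycled. Let $\rho$ denote the row of $\beta$ in $T$. During these steps $\beta$ is never the recycled value, so it only ever moves backward along flow paths, i.e.\ weakly up and weakly left; in particular its row is weakly decreasing. At the start of step $\beta$ the value $\beta$ is the least element of the current alphabet and so occupies the upper-left corner, hence lies in row $1$. Therefore over steps $1, \dots, \beta-1$ the value $\beta$ travels monotonically from row $\rho$ to row $1$, and since each move along a flow path decreases the row by at most one, it makes exactly one vertical move from row $k+1$ to row $k$ for each $k \in \{1, \dots, \rho-1\}$ and no other vertical moves. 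Each such move occurs at a step $\alpha < \beta$ and produces an edge $\alpha \to \beta$ in $\prom_k(T)$ with $\alpha < \beta$; conversely, no vertical move from row $i+1$ to row $i$ with $i \ge \rho$ can occur during these steps, since $\beta$ never reaches a row exceeding $\rho$ before being recycled.

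Combining these observations, $\beta$ receives an incoming edge from a strictly smaller vertex in $\prom_i(T)$ if and only if $i \le \rho - 1$, that is, if and only if $\rho > i$. Equivalently, $\beta$ is a nonexcedance of $\prom_i(T)$ exactly when $\rho \le i$, i.e.\ exactly when $\beta$ lies in one of the first $i$ rows of $T$, which is the claim. I expect the main point needing care to be the justification that \emph{all} edges into $\beta$ from smaller vertices arise from this pre-recycling trajectory: one must note that any edge $\alpha \to \beta$ created after step $\beta$, when $\beta$ has become the largest letter, necessarily has $\alpha > \beta$ and is therefore irrelevant to the excedance condition, so the monotone trajectory above captures every relevant edge. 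The degenerate case $\beta = 1$ (no prior steps, $\rho = 1$, always a nonexcedance) fits the pattern and should be noted explicitly.
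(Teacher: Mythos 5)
Your proposal is correct and follows essentially the same route as the paper: track the entry $\beta$ through the first $\beta-1$ gromotion steps, note that it must travel weakly upward from its starting row $\rho$ to row $1$ (where it sits just before being recycled), and conclude that an edge $\alpha\to\beta$ with $\alpha<\beta$ exists in $\prom_i(T)$ precisely when $\rho>i$. Your write-up is more detailed on the alphabet bookkeeping, the monotonicity of the trajectory, and the irrelevance of edges created after step $\beta$, but the underlying argument is the one the paper gives.
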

\begin{proof}
Fix $i$ and $T$. Consider an entry $\beta$ in $T$. After $\beta-1$ steps of gromotion, the entry $\beta$ will occupy the upper left corner of the tableau. In particular, if $\beta$ begins in row $R> i$, then at some point during the first $\beta-1$ steps of gromotion, $\beta$ moves from row $i+1$ to row $i$. If this happens during step $\alpha \leq \beta-1$, then $\prom_i(T)$ has a directed edge $\alpha \to \beta$ and so $\beta$ is an excedance.

If instead $\beta$ begins in row $R' \leq i$, then $\beta$ remains weakly north of row $i$ throughout the first $\beta-1$ steps of gromotion. Hence, $\prom_i(T)$ has no directed edge $\alpha \to \beta$ for any $\alpha < \beta$, and so $\beta$ is a nonexcedance.
\end{proof}

\begin{remark}
The rectangular case of \Cref{thm:antiexcedance} coincides with the standard tableau case of \cite[Theorem 6.12]{Gaetz.Pechenik.Pfannerer.Striker.Swanson:fluctuating}, which is phrased in terms of {antiexcedances} rather than nonexcedances. In the rectangular case, these notions are equivalent since the promotion digraph is the functional digraph of a fixed-point-free permutation \cite[Theorem 6.7]{Gaetz.Pechenik.Pfannerer.Striker.Swanson:fluctuating}. 
\end{remark}

\begin{corollary}\label{cor:uniquely_determined}
    Suppose $T\in\SYT(\lambda)$. Then $T$ is uniquely determined by its set of promotion digraphs. \qed
\end{corollary}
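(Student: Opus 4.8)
The plan is to read off from the promotion digraphs, for each row index $i$, the set of entries of $T$ lying in that row, and then to observe that this row-by-row data reconstructs $T$. First I would apply \Cref{thm:antiexcedance}: for each $i \geq 1$, the set $N_i$ of nonexcedances of $\prom_i(T)$ is exactly the set of entries occupying rows $1$ through $i$ of $T$. Adopting the convention $N_0 \coloneqq \emptyset$, the set difference $N_i \setminus N_{i-1}$ is then precisely the set of entries appearing in row $i$ of $T$.

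Next I would recover the shape and the filling from this data. The common vertex set $[n]$ records $n = |\lambda|$; the number of (nonempty) rows $\ell(\lambda)$ is the smallest $i$ with $N_i = [n]$; and the length $\lambda_i$ of row $i$ equals $|N_i \setminus N_{i-1}|$. Finally, since $T$ is standard, its rows strictly increase from left to right, so row $i$ of $T$ must be the increasing rearrangement of the set $N_i \setminus N_{i-1}$. This determines every entry of $T$, giving the claimed uniqueness.

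There is essentially no serious obstacle here: the substance is entirely contained in \Cref{thm:antiexcedance}, and what remains is the routine fact that a standard Young tableau is determined by the partition of $[n]$ into its row-sets (each row being the increasing listing of its set). Two minor points deserve a word of care. First, \Cref{thm:antiexcedance} is stated only for $i \geq 1$, so the row-$1$ extraction relies on the convention $N_0 = \emptyset$. Second, if one reads ``set of promotion digraphs'' as an \emph{unordered} collection, the index $i$ is still recoverable: because each row of $\lambda$ is nonempty, the sets $N_i$ form a strictly increasing chain $N_1 \subsetneq N_2 \subsetneq \cdots \subsetneq N_{\ell(\lambda)-1} \subsetneq [n]$ under inclusion, so they can be linearly ordered and matched to their indices.
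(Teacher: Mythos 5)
Your proposal is correct and is exactly the argument the paper intends: the corollary is stated with an immediate \qed because it follows directly from \Cref{thm:antiexcedance}, and your reconstruction via the nonexcedance sets $N_i$ (taking successive differences to get the row contents and then sorting each row increasingly) is precisely that deduction, with the added care about unordered digraph collections being a nice touch.
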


The reader can see \Cref{ex:nonrect_digraph} and \Cref{ex:rect_digraph} as illustrations of the previous corollary. Note that in \Cref{ex:nonrect_digraph}, $1,3,4,$ and $6$ are the nonexcedances of $\prom_1(T)$, and so comprise the first row of $T$. All the vertices except $7$ and $8$ are nonexcedances of $\prom_2(T)$ and are the content of the first two rows of $T$.

\subsection{Rectangular tableaux}
\label{sec:rect_std}
It is well known that the $n$th power of promotion on rectangular standard Young tableaux with $n$ cells is the identity (see, e.g., \cite{Haiman,Stanley:PandE}).

\begin{theorem}\label{thm:ft.prom_evac}
Let $T \in \SYT(r \times c)$ be a rectangular standard Young tableau. Then
$\promotion^{cr}(T) = T$. 
\end{theorem}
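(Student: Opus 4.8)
The plan is to reduce the claim to a statement about gromotion, where the alphabet bookkeeping cleanly exposes the periodicity, and then to extract the order from the rotational symmetry of a rectangle.

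First I would record the precise relationship between promotion and gromotion. Since $\grow(\cA) = (a_2 < \cdots < a_n < a_1)$ is a cyclic shift of the sequence $\cA$, we have $\grow^n(\cA) = \cA$ for any $n$-element alphabet. Promotion and gromotion perform identical jeu de taquin slides (steps $(1)$ and $(2)$), differing only in the final relabeling; step $(3)$ of promotion is exactly the order isomorphism that relabels the entries of $\gromotion(T)$ by their rank in $\grow(\cA)$. Writing $\rho_\mathcal{B} \colon \mathcal{B} \to [n]$ for the rank isomorphism of an $n$-element alphabet $\mathcal{B}$, a one-step check gives
\[
\promotion(\rho_\mathcal{B} \cdot S) = \rho_{\grow(\mathcal{B})} \cdot \gromotion(S)
\]
for every $S \in \SYT^\mathcal{B}(\lambda)$, because slides depend only on relative order and so commute with any order isomorphism. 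Iterating this identity (induction on $m$) yields $\promotion^m(T) = \rho_{\grow^m(\cA)} \cdot \gromotion^m(T)$ for all $m$. Taking $m = n$ and $\cA = (1 < \cdots < n)$, the equality $\grow^n(\cA) = \cA$ makes $\rho_{\grow^n(\cA)}$ the identity, so
\[
\promotion^{n}(T) = \gromotion^{n}(T), \qquad n = |\lambda|,
\]
for an arbitrary shape $\lambda$. This reduces the theorem to proving $\gromotion^{kr}(T) = T$ when $\lambda = r \times k$.

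For this core claim I would track which value is slid at each step. In any standard tableau over an alphabet $\mathcal{B}$, the minimum of $\mathcal{B}$ occupies the upper-left corner, and gromotion slides exactly that minimum along its flow path before reinserting it as the new maximum of $\grow(\mathcal{B})$. Since the successive alphabets are $\grow^{m}(\cA)$ with minima $1, 2, \ldots, n$, the $m$-th gromotion slides the value $m$, so over $n$ steps each value is slid exactly once. The assertion $\gromotion^{n}(T) = T$ is then that this full sweep returns every value to its original box. Here I would invoke the rotational symmetry peculiar to rectangles: the shape $r \times k$ is invariant under $180^\circ$ rotation, so the rotation-complement map $\epsilon$, which rotates a tableau by $180^\circ$ and replaces each value $v$ by $n+1-v$, is a shape-preserving involution on $\SYT(r \times k)$ (this map is Schützenberger evacuation $\evacuation$ in the rectangular case). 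The key lemma to establish is that $\epsilon$ conjugates gromotion to its inverse, with the alphabet correspondingly reversed; combined with the fact that for a rectangle every flow path is a maximal chain reaching the bottom row, this forces the gromotion orbit to close up after exactly $n$ steps.

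I expect the main obstacle to be precisely this last lemma: pinning down the intertwining of gromotion and $\epsilon$ at the level of alphabets, and deducing that the order is exactly $n$ rather than merely a divisor of some larger number. This is the one place where rectangularity is genuinely used — the reduction $\promotion^{n} = \gromotion^{n}$ holds for every shape, but $\gromotion^{n} = \id$ fails in general (e.g.\ for the shape $(4,3,2)$ tableau of \Cref{ex:nonrect_prom}), so the symmetry argument must exploit both the invariance of $r \times k$ under rotation and the fact that the flow path always reaches row $r$. Once $\gromotion^{n}(T) = T$ is in hand, the theorem follows immediately from the reduction, and one recovers — via \Cref{cor:uniquely_determined} and \Cref{lem:prom.P} — that iterating promotion cyclically rotates every promotion digraph $\prom_i(T)$ with period $n$.
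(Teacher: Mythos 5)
The paper offers no proof of \Cref{thm:ft.prom_evac}; it cites it as a classical result (Haiman, Stanley's \emph{Promotion and evacuation}), so the question is whether your sketch would stand as a complete proof. It would not. Your reduction $\promotion^{n}=\gromotion^{n}$ for $n=|\lambda|$ is correct but is pure bookkeeping valid for every shape; it repackages the statement without touching the reason rectangularity matters. The entire content of the theorem sits in what you yourself label ``the key lemma to establish,'' and the route you indicate does not reach it. The relation $\evacuation\circ\promotion\circ\evacuation=\promotion^{-1}$ holds for evacuation on standard tableaux of \emph{every} shape, so conjugating promotion to its inverse by an involution cannot by itself force $\promotion^{n}=\mathrm{id}$; it only places promotion inside an infinite dihedral group. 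The classical argument needs two further nontrivial inputs: the shape-independent identity expressing $\promotion^{n}$ as the composite of evacuation and dual evacuation (Sch\"utzenberger; proved via the triangular growth-diagram array), and the fact that for rectangular shapes evacuation coincides with dual evacuation, i.e.\ with rotate-and-complement. Your sketch supplies neither, and the appeal to ``flow paths reaching the bottom row'' does not bridge the gap. (Alternatives such as Haiman's dual-equivalence argument would also do, but none is sketched here.)

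Two further problems. Your stated goal of deducing that ``the order is exactly $n$'' is false: the element of $\SYT(2\times 2)$ with first row $\{1,2\}$ has promotion order $2$, not $4$, and in general the order is only a divisor of $n$ --- which is precisely what \Cref{thm:ft.prom_evac} asserts. And the parenthetical claim that your rotation-complement map ``is Sch\"utzenberger evacuation in the rectangular case'' is itself a nontrivial theorem requiring proof, not a definition available for free. In short, the framing and the gromotion reduction are fine, but the mathematical core of the theorem is left unproven.
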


The following is the main result of  \cite{Gaetz.Pechenik.Pfannerer.Striker.Swanson:fluctuating}. Let $\sigma = (1\,2\,\cdots\,cr)$ be the long cycle in the symmetric group $\mathfrak{S}_{cr}$.

\begin{theorem}[{\cite[Theorem~6.7 \& Corollary~6.8]{Gaetz.Pechenik.Pfannerer.Striker.Swanson:fluctuating}}]\label{thm:prom_perms}
  Let $T \in \SYT(r \times c)$. Then for all $1 \leq i \leq r-1$:
  \begin{enumerate}
    \item $\prom_i(T)$ is a permutation,
    \item $\prom_i(T) = \prom_{r-i}(T)^{-1}$,
    \item $\prom_i(\promotion(T)) = \sigma^{-1} \prom_i(T) \sigma$,
  \end{enumerate}
  Moreover, if $r$ is even, then $\prom_{r/2}(T)$ is an involution. \qed
\end{theorem}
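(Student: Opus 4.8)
The plan is to treat the three numbered claims in the order (1), (3), (2), since (1) and (3) are essentially self-contained while (2) carries the genuine symmetry content, and then to read off the involution statement from (2). Throughout I work with the gromotion orbit $A_m \coloneqq \gromotion^m(T)$, viewing each tableau as a filling of the fixed underlying set $[n]$, so that the ``value that moves from row $i+1$ to row $i$'' is a well-defined element of $[n]$.

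First I would prove (1). By \Cref{prop:no_loops} each vertex of $\prom_i(T)$ has outdegree at most $1$, so it suffices to show outdegree and indegree are both exactly $1$. For outdegree, recall (the note after \Cref{def:prom_path}) that the flow path of each gromotion is a maximal chain in the box poset; for rectangular $\lambda$ this chain runs from the upper-left box to the lower-right box, hence crosses the boundary between rows $i$ and $i+1$ exactly once, in an upward step. Thus at every gromotion exactly one value moves from row $i+1$ to row $i$, so every vertex has outdegree exactly $1$ and $\prom_i(T)$ is a genuine function $[n]\to[n]$. For indegree I would run a conservation argument over the full cycle. By \Cref{thm:ft.prom_evac} we have $\promotion^{n}(T)=T$, and the relabeling relation below gives $\gromotion^{n}(T)=T$, so each value traces a closed trajectory of period dividing $n$. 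In a single gromotion every value on the flow path moves up or left by one box, while the unique deleted value reappears at the lower-right corner, a ``teleport'' from row $1$ to row $r$. Over the $n$ gromotions the minimum of the alphabet runs through all of $[n]$ (since $\grow$ rotates the order by one), so each value teleports exactly once, down-crossing the boundary between rows $i$ and $i+1$ exactly once; as the trajectory is closed and all non-teleport moves go up or left, each value must up-cross that boundary exactly once as well. Hence each $\beta\in[n]$ is the head of exactly one edge, the indegree is $1$, and $\prom_i(T)$ is a permutation.

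Next I would prove (3). The key observation is that, as fillings by $[n]$, promotion and gromotion are related entrywise by $\promotion(S)=\sigma^{-1}\!\cdot\gromotion(S)$, because promotion's relabeling $a_j\mapsto a_{j-1}$ and $\bullet\mapsto a_n$ is precisely the cyclic decrement $\sigma^{-1}$ applied to the filling produced by $\gromotion$. Writing $B_m=\gromotion^{m}(\promotion(T))$, the equivariance of gromotion under order-isomorphisms of the alphabet propagates this relation along the orbit, giving $B_m=\sigma^{-1}\!\cdot A_{m+1}$ for all $m$. Consequently the flow path and the crossing box at step $s$ of $\promotion(T)$'s orbit coincide with those at step $s+1$ of $T$'s orbit, while the crossing value is relabeled by $\sigma^{-1}$; in digraph terms $\prom_i(\promotion(T))(s)=\sigma^{-1}\bigl(\prom_i(T)(s+1)\bigr)=\bigl(\sigma^{-1}\prom_i(T)\sigma\bigr)(s)$. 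The wraparound index $s=n$, which is exactly the case not covered by \Cref{lem:prom.P}, is handled automatically by the periodicity $A_{n}=A_{0}$ from \Cref{thm:ft.prom_evac}, so the identity holds on all of $[n]$.

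Finally, (2) is where the real symmetry lives, and I expect its bookkeeping to be the main obstacle. The target is the reciprocity that $\beta=\prom_i(T)(\alpha)$ if and only if $\alpha=\prom_{r-i}(T)(\beta)$, i.e.\ $\prom_{r-i}(T)=\prom_i(T)^{-1}$. I would derive it from the evacuation symmetry of rectangular tableaux: for rectangular shapes $\evacuation(T)$ is the $180^{\circ}$-rotation of $T$ with entries complemented $v\mapsto n+1-v$, and evacuation conjugates gromotion to inverse gromotion, so the forward gromotion orbit of $\evacuation(T)$ is the reversed gromotion orbit of $T$. Under the $180^{\circ}$ rotation the boundary between rows $i$ and $i+1$ becomes the boundary between rows $r-i$ and $r-i+1$, and an upward crossing becomes a downward one; the time-reversal supplied by evacuation turns that downward crossing back into an upward one, so an edge recorded by $\prom_i$ is matched with an edge recorded by $\prom_{r-i}$. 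The delicate point, which I would treat carefully, is that the value-complementation $v\mapsto n+1-v$ and the induced complementation of the step index, pinned down by $\gromotion^{n}(T)=T$ and by the fact (used in (1)) that the value teleporting at a given step equals that step's index, conspire to interchange the tail $\alpha$ with the head $\beta$ along the ``diagonal'' where value $s$ teleports at step $s$. This produces $\prom_{r-i}(T)=\prom_i(T)^{-1}$ for the same tableau $T$. The involution claim is then immediate: when $r$ is even and $i=r/2$ we have $r-i=i$, so $\prom_{r/2}(T)=\prom_{r/2}(T)^{-1}$.
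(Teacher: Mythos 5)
First, for context: this paper does not prove \Cref{thm:prom_perms} at all --- it is quoted directly from the cited source (Gaetz--Pechenik--Pfannerer--Striker--Swanson, Theorem~6.7 and Corollary~6.8), so your attempt is a from-scratch reconstruction rather than a parallel to anything in this paper. Your arguments for (1) and (3) are correct and self-contained. For (1), the flow path of a rectangular tableau is a maximal chain from the top-left to the bottom-right box, so it crosses each row boundary exactly once and every vertex has outdegree exactly one; your conservation argument (closed trajectories over $n$ gromotions by \Cref{thm:ft.prom_evac}, exactly one downward ``teleport'' crossing of each boundary per value, hence exactly one upward crossing) correctly gives indegree one. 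For (3), the relation $\gromotion^m(\promotion(T))=\sigma^{-1}\cdot\gromotion^{m+1}(T)$ together with equivariance of gromotion under alphabet order-isomorphisms does yield the conjugation formula, including the wraparound index via $\gromotion^n(T)=T$.

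Part (2) is where the proof breaks down, and the gap sits exactly at the point you flag as ``delicate.'' The symmetry you set up --- evacuation as $180^\circ$ rotation plus complementation, conjugating gromotion to its inverse --- when carried out carefully produces a relation of the form $\prom_i(\evacuation(T))(\alpha)=n+1-\prom_{r-i}(T)(n+1-\alpha)$, i.e.\ $\prom_i(\evacuation(T))=w_0\,\prom_{r-i}(T)\,w_0$. This matching sends tails to (complemented) tails and heads to (complemented) heads; nothing in the construction interchanges the step index $\alpha$ with the value $\beta$, and nothing removes $\evacuation(T)$ from the statement in favor of $T$. What you obtain is (a form of) the separate evacuation symmetry appearing in the cited source, and it yields (2) only if you already know how $\prom_i(\evacuation(T))$ compares to $\prom_i(T)^{-1}$ --- which is equivalent to (2) itself. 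The genuine content of (2) is a duality exchanging the ``time'' axis (which gromotion step) with the ``value'' axis (which entry crosses the row boundary), and in the source this comes from a transpose-type symmetry of the cylindrical growth diagram underlying the promotion matrices, not from evacuation; your sketch supplies no mechanism for that exchange. Since the involution statement for $r$ even is read off from (2) with $i=r/2$, it inherits the same gap, while (1) and (3) stand.
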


The following is a more explicit version of \Cref{thm:prom_perms}(3). See \Cref{ex:rect_digraph} for an illustration.
\begin{corollary}\label{cor:rect_rotate}
 For $T \in \SYT(r \times c)$, we have
 \[ \prom_i(T)(\alpha+1) = \beta+1 \qquad\Longleftrightarrow\qquad \prom_i(\promotion(T))(\alpha) = \beta \]
  for all $1 \leq \alpha, \beta \leq cr$ and all $1 \leq i \leq r-1$. (Here, we interpret the number $cr+1$ as $1$.)
  \end{corollary}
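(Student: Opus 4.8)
The plan is to derive this corollary directly from the conjugation formula in \Cref{thm:prom_perms}(3), using the explicit description of the long cycle $\sigma$. Since $T$ is rectangular, part~(1) of that theorem guarantees that $\prom_i(T)$ is a genuine permutation of $[kr]$, so both sides of the asserted biconditional are meaningful statements about a bijection of $[kr]$, and every value $\prom_i(T)(\alpha+1)$ can be written uniquely in the form $\beta+1$.

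First I would record that $\sigma = (1\,2\,\cdots\,kr)$ acts as the cyclic shift, namely $\sigma(\gamma) = \gamma + 1$ and $\sigma^{-1}(\gamma) = \gamma - 1$, where throughout we read indices modulo $kr$ with representatives in $[kr]$ (so $kr+1 \equiv 1$, matching the convention in the statement). This is precisely the ``$+1$'' appearing in the corollary. Next I would evaluate the conjugation $\prom_i(\promotion(T)) = \sigma^{-1}\,\prom_i(T)\,\sigma$ at an arbitrary $\alpha$:
\[
\prom_i(\promotion(T))(\alpha) = \sigma^{-1}\bigl(\prom_i(T)(\sigma(\alpha))\bigr) = \sigma^{-1}\bigl(\prom_i(T)(\alpha+1)\bigr).
\]
Writing $\prom_i(T)(\alpha+1) = \beta+1$, the right-hand side becomes $\sigma^{-1}(\beta+1) = \beta$. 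Hence $\prom_i(T)(\alpha+1) = \beta+1$ forces $\prom_i(\promotion(T))(\alpha) = \beta$; conversely, applying $\sigma$ to both sides of $\prom_i(\promotion(T))(\alpha) = \beta$ recovers $\prom_i(T)(\alpha+1) = \beta+1$. This yields the biconditional for all $1 \le \alpha, \beta \le kr$ and all $1 \le i \le r-1$.

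Since the entire argument is a mechanical unpacking of an identity already established in \Cref{thm:prom_perms}(3), there is no genuine obstacle. The one point that requires care is the wraparound bookkeeping: one must verify that the ``$+1$'' operations in the corollary are literally the action of $\sigma$, including the boundary cases $\alpha = kr$ or $\beta = kr$. This is exactly where the general-shape analogue \Cref{lem:prom.P} is silent (compare the remark following that lemma, which covers only $1 \le \alpha,\beta \le n-1$); invoking the conjugation formula, which is available precisely because $T$ is rectangular, sidesteps this boundary analysis entirely.
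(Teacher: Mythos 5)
Your proof is correct, but it takes a different route from the paper's. The paper proves \Cref{cor:rect_rotate} by invoking \Cref{lem:prom.P} for the cases $\alpha,\beta \neq kr$ and then using \Cref{thm:prom_perms}(1) --- the fact that $\prom_i(T)$ and $\prom_i(\promotion(T))$ are permutations --- to force the remaining boundary assignments; you instead unpack the conjugation identity $\prom_i(\promotion(T)) = \sigma^{-1}\prom_i(T)\sigma$ of \Cref{thm:prom_perms}(3) directly, which handles all of $[kr]$ uniformly and avoids any case analysis at the wraparound. Both arguments are legitimate since \Cref{thm:prom_perms} is quoted from prior work, and your version is arguably cleaner; what the paper's route buys is independence from the composition convention implicit in part (3). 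That convention is the one place your argument needs care: you compute $(\sigma^{-1}\prom_i(T)\sigma)(\alpha) = \sigma^{-1}\bigl(\prom_i(T)(\sigma(\alpha))\bigr)$, i.e., permutations acting on the left with $(fg)(x)=f(g(x))$; under the opposite (right-action) convention the same identity would yield the reversed shift $\prom_i(\promotion(T))(\alpha+1)=\beta+1 \Leftrightarrow \prom_i(T)(\alpha)=\beta$. You should confirm the intended convention, e.g.\ by checking consistency with \Cref{lem:prom.P} on the non-boundary cases, which pins it down to the standard one you used. With that one sentence added, your proof is complete.
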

  \begin{proof}
      For $\alpha,\beta \neq cr$, this is the rectangular case of \Cref{lem:prom.P}. The remaining cases then follow from \Cref{thm:prom_perms}(1).
  \end{proof}

\begin{corollary}\label{cor:uniquely_determined_rect}
    Suppose $T\in\SYT(r \times c)$ is rectangular. Then $T$ is uniquely determined by its promotion permutations $(\prom_i(T))_{i=1}^{\lfloor r/2 \rfloor}$.
\end{corollary}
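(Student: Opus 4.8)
The plan is to deduce this from the already-established fact (\Cref{cor:uniquely_determined}) that any $T \in \SYT(\lambda)$ is determined by the full collection of its promotion digraphs $(\prom_i(T))_{i \geq 1}$, combined with the inversion symmetry \Cref{thm:prom_perms}(2) that is special to the rectangular case. The essential point is that in the rectangular setting the ``lower half'' of the promotion permutations determines the ``upper half'' by taking inverses, so the entire collection---and hence $T$---can be recovered from just the first $\lfloor r/2 \rfloor$ of them.

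Concretely, I would first observe that only finitely many of the digraphs carry information: since $\ell(r \times k) = r$, the remark following \Cref{def:promotion_digraph} shows $\prom_i(T)$ is edgeless for $i \geq r$. Thus \Cref{cor:uniquely_determined} already tells us that $T$ is determined by the finite list $\prom_1(T), \dots, \prom_{r-1}(T)$.

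Next I would invoke \Cref{thm:prom_perms}(2), which gives $\prom_{r-i}(T) = \prom_i(T)^{-1}$. Hence every permutation $\prom_j(T)$ with $\lceil r/2 \rceil \leq j \leq r-1$ is the inverse of $\prom_{r-j}(T)$, and the index $r-j$ satisfies $1 \leq r-j \leq \lfloor r/2 \rfloor$. Therefore the whole list $\prom_1(T), \dots, \prom_{r-1}(T)$ is recoverable from the sublist $\prom_1(T), \dots, \prom_{\lfloor r/2 \rfloor}(T)$, provided the two index ranges jointly exhaust $\{1, \dots, r-1\}$.

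Finally I would verify exactly this index bookkeeping: the directly-given indices $\{1, \dots, \lfloor r/2 \rfloor\}$ together with the inversely-recovered indices $\{\lceil r/2 \rceil, \dots, r-1\}$ cover $\{1, \dots, r-1\}$, since $\lfloor r/2 \rfloor$ and $\lceil r/2 \rceil$ are equal when $r$ is even and consecutive when $r$ is odd, so no index is missed. Combining this with the previous step shows $T$ is determined by $(\prom_i(T))_{i=1}^{\lfloor r/2 \rfloor}$. I do not expect any genuine obstacle here: the only content beyond citing \Cref{cor:uniquely_determined} is the symmetry \Cref{thm:prom_perms}(2) and the elementary check that the two index ranges abut without a gap.
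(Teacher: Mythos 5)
Your proposal is correct and is exactly the paper's argument: the paper's proof is simply ``combine \Cref{thm:prom_perms} with \Cref{cor:uniquely_determined},'' and you have spelled out the same two ingredients together with the (routine but worth stating) index check that $\{1,\dots,\lfloor r/2\rfloor\}$ and $\{\lceil r/2\rceil,\dots,r-1\}$ jointly cover $\{1,\dots,r-1\}$.
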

\begin{proof}
    This follows by combining \Cref{thm:prom_perms} with \Cref{cor:uniquely_determined}.
\end{proof}

However, we have the following in the case that $T$ has at most two rows. (This was claimed without proof in \cite{Gaetz.Pechenik.Pfannerer.Striker.Swanson:SL4}.)

\begin{definition}
    A \newword{noncrossing matching} of $[n]=\{1,\ldots,n\}$ is a partition of $[n]$ into pairs $(a,b)$ with $a<b$ such that there are no pairs $(i, k)$ and $(j,\ell)$ such that $i<j<k<\ell$.
\end{definition}
 We often draw a noncrossing matching as a graph with vertices $1,\ldots,n$ (drawn linearly or circularly) and edges between paired vertices (drawn so that none of the edges intersect).

By \Cref{thm:prom_perms}, $\prom_1$ of a $2$-row rectangular tableau is a fixed-point-free involution. In fact, we can say something much stronger.

\begin{proposition}[cf.\ {\cite[Theorem~9.3 ($r=2$)]{Gaetz.Pechenik.Pfannerer.Striker.Swanson:SL4}}]\label{prop:2rowrectdigraphs}
If $T \in \SYT(2\times c)$, then $\prom_1(T)$ consists of exactly $c$ $2$-cycles that form a noncrossing matching. (For $i > 1$, $\prom_i(T)$ has no edges.) Moreover, every noncrossing matching of $[2c]$  arises as $\prom_1(U)$ for some $U \in \SYT(2 \times c)$.
\end{proposition}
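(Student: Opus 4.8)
The plan is to break the proposition into three claims: that $\prom_1(T)$ is a union of $c$ disjoint $2$-cycles (with $\prom_i(T)$ edgeless for $i>1$), that these $2$-cycles form a noncrossing matching, and that every noncrossing matching of $[2c]$ is hit. The first claim is essentially formal. For $i>1$ we have $i \ge \ell(\lambda)=2$, so $\prom_i(T)$ has no edges by the observation following \Cref{def:promotion_digraph}. For $i=1$, \Cref{thm:prom_perms}(1) makes $\prom_1(T)$ a permutation, and since $r=2$ is even the same theorem makes it an involution; \Cref{prop:no_loops} makes it fixed-point-free. A fixed-point-free involution of $[2c]$ is exactly a perfect matching, i.e.\ $c$ disjoint $2$-cycles.

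For the noncrossing claim --- which I expect to be the crux --- I would identify $\prom_1(T)$ with the \emph{arc matching} $M(T)$ of the ballot word $w=\lattice(T)$: reading $w$ left to right with $1$ an opening and $2$ a closing bracket, $M(T)$ pairs each closing bracket with its matching opening bracket. Since $w$ is a genuine ballot sequence, $M(T)$ is noncrossing by construction, so it suffices to prove $\prom_1(T)=M(T)$. Two ingredients set this up. First, \Cref{thm:antiexcedance} shows that in any $2$-cycle $(\alpha,\beta)$ with $\alpha<\beta$ the smaller label $\alpha$ is a nonexcedance, hence lies in row $1$ (so $w_\alpha=1$), while $\beta$ is an excedance in row $2$ (so $w_\beta=2$); thus every arc joins an opening bracket to a later closing bracket, as in $M(T)$. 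Second, \Cref{prop:balancing} computes the arc leaving vertex $1$: in the first step of gromotion the label moving from row $2$ to row $1$ is the first $1$-balance point $m_1$ of $w$, and $m_1$ is precisely the partner of $1$ in $M(T)$ (the first return of the bracket height to $0$). This computation applies verbatim to the arc out of vertex $1$ of $\prom_1(S)$ for every $S\in\SYT(2\times c)$.

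It remains to propagate agreement at vertex $1$ to all vertices, and this propagation is the main obstacle. Using rotation-equivariance from \Cref{cor:rect_rotate} repeatedly gives $\prom_1(T)(v)\equiv m_1\bigl(\promotion^{v-1}(T)\bigr)+(v-1)\pmod{2c}$, so every arc of $\prom_1(T)$ is the first-balance-point arc of a rotated tableau. To conclude $\prom_1(T)=M(T)$ I need the arc matching to satisfy the same recursion, i.e.\ that promotion rotates $M$ in the sense $M(\promotion(T))=\sigma^{-1}M(T)\sigma$. I would deduce this from \Cref{prop:promotion_via_balance_points}, which describes $\lattice(\promotion(T))$ as $w$ with its first balance point decremented (turning the bracket at $m_1$ from closing to opening), its first letter deleted, and a closing bracket appended; one checks this operation cyclically rotates the bracket matching by one position. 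Equivalently, one can avoid rotation-equivariance and argue by induction on $c$ via the first-return decomposition: the arc $(1,m_1)$ separates $\{2,\dots,m_1-1\}$ from $\{m_1+1,\dots,2c\}$, and the real content is showing that every other arc stays within one of these two blocks. Granting this, $\prom_1(T)=M(T)$ is noncrossing.

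Finally, surjectivity follows by counting. The number of noncrossing perfect matchings of $[2c]$ equals the Catalan number $C_c=|\SYT(2\times c)|$. By \Cref{cor:uniquely_determined_rect} (here $\lfloor r/2\rfloor=1$), the map $T\mapsto\prom_1(T)$ is injective, so it is an injection between finite sets of equal cardinality and hence a bijection; in particular every noncrossing matching of $[2c]$ equals $\prom_1(U)$ for some $U\in\SYT(2\times c)$.
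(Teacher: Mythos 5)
Your proposal is correct and follows essentially the same route as the paper: both identify $\prom_1(T)$ with the bracket/arc matching of $\lattice(T)$ by computing the arc out of vertex $1$ via the first balance point (\Cref{prop:balancing}) and then propagating to every vertex using the promotion--rotation equivariance extracted from \Cref{prop:promotion_via_balance_points} and \Cref{cor:rect_rotate}. The only divergence is minor and confined to surjectivity, where the paper directly inverts the word-to-matching construction while you count noncrossing matchings against $|\SYT(2\times c)| = C_c$ and invoke injectivity from \Cref{cor:uniquely_determined_rect}; both arguments are valid.
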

\begin{proof}
Let $w = \lattice(T)$. We construct a noncrossing matching of $[2c]$  by repeatedly reading $w$ from left to right and pairing any $b_1$ with $b_2$ if $w_{b_1} = 1$, $w_{b_2} = 2$ and all numbers between $b_1$ and $b_2$ are already paired. 
 This produces the usual noncrossing matching $\tograph(T)$ associated with the $2$-row standard Young tableau (see, for example, \cite{Stanley:Catalan,Tymoczko,Patrias.Pechenik:evacuation}). Note that this operation is invertible; given a noncrossing matching on $2c$ vertices, we recover the lattice word by reading a $1$ for each vertex that is paired with a larger indexed vertex and reading a $2$ for each vertex paired with a smaller indexed vertex. 

It was observed by Dennis White (see e.g.~\cite[Proof of Theorem~8.2]{Rhoades:thesis}) that this bijection intertwines promotion with the rotation of noncrossing matchings. Our proof below recovers this fact in the context of promotion digraphs.

 As in \Cref{prop:balancing}, let $j_1$ be the unique entry of $T$ that moves from row $2$ to row $1$ when applying promotion to $T$. Recall from \Cref{prop:promotion_via_balance_points}, that $\lattice(\promotion(T))$ is obtained from $w$ by decrementing $w_{j_1}$, deleting the first letter, and appending $2$ to the end. Note also that $\prom_1(T)(1) = j_1$ and that $j_1$ is matched with $1$ in $\tograph(T)$.

 Consider the noncrossing matching $\tograph(T)$ as a graph with vertices arranged clockwise around a circle. If we rotate $\tograph(T)$ counterclockwise by an angle of $2\pi/{2c}$, the effect on the corresponding lattice word is to relabel vertex $j_1$ from $2$ to $1$, move the first vertex to the end, relabel it from $1$ to $2$. This matches the effect on lattice words of applying promotion, so we see that the bijection $\tograph$ intertwines promotion with rotation. By $\prom_1(T)(1) = j_1$ and this promotion equivariance, we then obtain that $\prom_1(T)(m)$ is the number paired with $m$ in $\tograph(T)$. Thus $\prom_1(T)$ consists of $2$-cycles where the two elements are paired in the noncrossing matching $\tograph(T)$.
\end{proof}

\begin{example}\label{ex:2rowrectdigraphs}
Let
\begin{center}
    $T=$ \ytableaushort{1246,3578}.\\
    Then $\lattice(T)=$
\raisebox{-.11cm}{\begin{tikzpicture}[node distance=1cm]
    \node (1) at (0, 0) {1};
    \node (2) at (1,0) {1};
    \node (3) at (2,0) {2};
    \node (4) at (3,0) {1};
    \node (5) at (4,0) {2};
    \node (6) at (5,0) {1};
    \node (7) at (6,0) {2};
    \node (8) at (7,0) {2};
     \draw[-] (2.north) to [out=30,in=150] (3.north);
          \draw[-] (4.north) to [out=30,in=150] (5.north);
            \draw[-] (6.north) to [out=30,in=150] (7.north);
             \draw[-] (1.north) to [out=30,in=150] (8.north);
\end{tikzpicture}}\\

\vspace{.2in}
and
$\prom_1(T):$
\raisebox{-0.6cm}{\begin{tikzpicture}[node distance=1cm]
    \node (1) at (0, 0) {1};
    \node (2) at (1,0) {8};
    \node (3) at (2,0) {2};
    \node (4) at (3,0) {3};
    \node (5) at (0,1) {4};
    \node (6) at (1,1) {5};
    \node (7) at (2,1) {6};
    \node (8) at (3,1) {7};
          \draw[<->, thick] (1) to (2);
            \draw[<->, thick] (3) to (4);
             \draw[<->, thick] (5) to (6);
             \draw[<->, thick] (7) to (8);
\end{tikzpicture}}
\end{center}
\end{example}

We do not know of a characterization of the sets of promotion digraphs that can arise from standard Young tableaux $T$ in general, even in the case that $T$ has two rows. 
\begin{example}
Let $T =  \ytableaushort{134,25}$. Then iterated gromotion gives
\begin{align*}
  \ytableaushort{134,25} \rightarrow \ytableaushort{234,5{\mathbf{1}}} \rightarrow \ytableaushort{34{\mathbf{2}},5{\mathbf{1}}}\rightarrow\ytableaushort{4{\mathbf{1}}{\mathbf{2}},5{\mathbf{3}}}\rightarrow
  \ytableaushort{5{\mathbf{1}}{\mathbf{2}},{\mathbf{3}}{\mathbf{4}}}\rightarrow\ytableaushort{{\mathbf{1}}{\mathbf{2}}{\mathbf{5}},{\mathbf{3}}{\mathbf{4}}},
  \end{align*}
so that
  \[\prom_1(T) = \raisebox{-.6cm}{\begin{tikzpicture}[node distance=1cm]
    \node (1) at (0, 2) {2};
    \node (9) at (-1, 2) {1};
    \node (3) at (-2, 2) {3};
    \node (6) at (-1, 1) {4};
    \node (7) at (0, 1) {5};
    \draw[->, thick] (3) -- (9);
    \draw[->, thick] (9) -- (1);
    \draw[->, thick] (6) -- (7);
\end{tikzpicture}}.\]
Note that the promotion digraph for this nonrectangular tableau is very far from being a noncrossing matching.
\end{example}

\subsection{Summary of the preceeding results}\label{sec:summary_standard}

Here, we summarize the various results obtained regarding the promotion digraphs of standard tableaux in \Cref{sec:standard_prom_digraph,sec:rect_std}. 

\begin{theorem}
\label{thm:std_digraph_summary}
Let $T\in\SYT(\lambda)$, where $|\lambda|=n$ and $\lambda$ has $r$ rows. We have the following.

\begin{enumerate}
\item Each digraph $\prom_i(T)$ has no loops, and each vertex has outdegree either $0$ or $1$. 
\item $\prom_i(T)$ is a fixed-point-free partial function $\prom_i(T) : [n] \to [n]$. 
\item  $\prom_i(T)(\alpha+1) = \beta+1 \Longleftrightarrow \prom_i(\promotion(T))(\alpha) = \beta$
  for all $1 \leq \alpha,\beta \leq n-1$ and all $1\leq i \leq r-1$. 
\item The nonexcedances of $\prom_i(T)$ are exactly the numbers in the first $i$ rows of $T$. 
\end{enumerate}

Now assume that $\lambda=r\times c$ is rectangular. Then we additionally have the following.

\begin{enumerate}[resume]
\item $\prom_i(T)$ is a permutation. 
\item $\prom_i(T) = \prom_{r-i}(T)^{-1}$.
\item
$\prom_i(\promotion(T)) = \sigma^{-1} \prom_i(T) \sigma$.
\item If $r$ is even, then $\prom_{r/2}(T)$ is an involution. 
\item $\prom_i(T)(\alpha+1) = \beta+1 \Longleftrightarrow \prom_i(\promotion(T))(\alpha) = \beta$
  for all $1 \leq \alpha, \beta \leq cr$ and all $1 \leq i \leq r-1$. (Here, we interpret the number $cr+1$ as $1$.) 
\item $T$ is uniquely determined by its promotion permutations $(\prom_i(T))_{i=1}^{\lfloor r/2 \rfloor}$. 
\item  When $r=2$, $\prom_1(T)$ consists of exactly $c$ $2$-cycles that form a noncrossing matching. 
Moreover, every noncrossing matching of $[2c]$  arises as $\prom_1(U)$ for some $U \in \SYT(2 \times c)$. 

\end{enumerate}
\end{theorem}
\begin{proof}
    This theorem collects together the statements of \Cref{prop:no_loops,cor:std_fixed_pt_free,lem:prom.P,thm:antiexcedance,thm:prom_perms,cor:rect_rotate,cor:uniquely_determined_rect,prop:2rowrectdigraphs}. Note that these include both new results and results from prior papers, as noted in each statement.
\end{proof}

\section{Increasing tableaux}\label{sec:increasing}
\subsection{\texorpdfstring{$K$}{K}-promotion and gromotion}\label{sec:general_inc}
Let $\mathcal{A}$ be a finite alphabet.
An \newword{increasing tableau} of shape $\lambda$ and alphabet $\cA$ is a filling of the Young diagram $\lambda$ with elements of $\mathcal{A}$ such that rows are strictly increasing left-to-right and columns are strictly increasing top-to-bottom. We say an increasing tableau $T$ on the alphabet $\mathcal{A}$ is \newword{packed} if every element of $\mathcal{A}$ appears in $T$. 
Let $\inc^\mathcal{A}(\lambda)$ denote the set of increasing tableaux of shape $\lambda$ on the alphabet $\mathcal{A}$ and let $\packinc^\mathcal{A}(\lambda)$ denote the subset of packed tableaux.
We write $\inc^q(\lambda)$ when $\mathcal{A}$ is the alphabet $\{1, \dots, q\}$ with its standard ordering.

Suppose $\lambda$ has $r$ rows. Let $2^{[r]}$ be the collection of all subsets of $[r]$. The \newword{lattice word} of an increasing tableau $T \in \inc^q(\lambda)$ is a word $\lattice(T) = w_1 \ldots w_{q}$ in the alphabet $2^{[r]}$ where $w_i$ is the set of rows in which $T$ has an entry $i$. If we write the elements of each subset in some order, then the resulting word in the alphabet $[r]$ is a lattice word.

\begin{example}\label{ex:inc_latticeword}
    Let 
    \[
    T = \ytableaushort{1 2 3 5, 2 4 5 8, 4 6 7 9, 6 8 {10} {11}}.
    \]
    Then 
    \[
    \lattice(T) =w =  1 \{1, 2\} 1 \{2, 3\} \{1, 2\} \{3, 4\} 3 \{2, 4\} 3 4 4
    \]
    (where we have suppressed brackets around singleton sets for better readability). If we write the elements of each subset in decreasing order, we obtain the word $1211322143342344$, which is a lattice word. Choosing different orders for the elements of subsets, we can instead obtain $1121321243324344$, which is also a lattice word.
\end{example}

We next recall the definition of \newword{$K$-promotion} \cite{Pechenik:CSP} for  increasing tableaux $T$ on the alphabet $\mathcal{A}$. 
Given $T\in\inc^\cA(\lambda)$ with $\cA = (a_1 < \dots < a_n)$ and $\bullet \notin \cA$, we form the $K$-promotion $\promotion(T)\in\inc^\cA(\lambda)$ by doing the following:
\begin{enumerate}
        \item  First, if the entry $a_1$ exists in $T$ (necessarily in the upper left corner), delete it and fill its box with the symbol $\bullet$.
        Set $i=2$.
        \item Let $\mathfrak{B}$ be the set (possibly empty) of boxes with $\bullet$ in them. 
        Let $\mathfrak{C}$ be the set of boxes that have entry $a_i$ and are adjacent to a box in $\mathfrak{B}$. Fill all boxes in $\mathfrak{C}$ with $\bullet$ and all boxes in $\mathfrak{B}$ that are adjacent to a box in $\mathfrak{C}$ with $a_i$. If $i = n$, proceed to the next step; otherwise, increment $i$ and repeat this step.      
    \item Replace each entry $a_j$ with $a_{j-1}$ and replace all $\bullet$s with $a_n$.
\end{enumerate}

As before, \newword{$K$-gromotion} $\gromotion(T) \in \inc^{\grow(\cA)}(\lambda)$ is the minor variant of $K$-promotion where we replace step (3) by 
\begin{enumerate}
    \item[$(3^*)$] Replace all $\bullet$s with $a_1$.
\end{enumerate}

\begin{example}\label{ex:KproandKgro}
\ytableausetup{boxsize=1.3em,centertableaux}
Below, we perform $K$-promotion on an increasing tableau $T \in \inc^\cA(5,4,3)$ with alphabet $\cA = (1 < 2 < \dots <  9)$ to obtain $\promotion(T) \in \inc^\cA(5,4,3)$. 
 \begin{align*}   
T&=\ytableaushort{12357,2369,468}\rightarrow\ytableaushort{\bullet2357,2369,468}\rightarrow\ytableaushort{2\bullet357,\bullet369,468}\rightarrow\ytableaushort{23\bullet57,3\bullet69,468}\rightarrow \ytableaushort{235\bullet7,3\bullet69,468}\\ &\rightarrow\ytableaushort{235\bullet7,36\bullet9,4\bullet8}\rightarrow\ytableaushort{2357\bullet,36\bullet9,4\bullet8}\rightarrow\ytableaushort{2357\bullet,3689,48\bullet}\rightarrow\ytableaushort{1246\bullet,2578,37\bullet}\rightarrow\ytableaushort{12469,2578,379}=\promotion(T)
 \end{align*}
 If instead we had performed $K$-gromotion, we would have obtained
 $\gromotion(T)\in \inc^{\grow(\cA)}(5,4,3)$ below.
     \[\gromotion(T)=\ytableaushort{23571,3689,481} \qedhere\]
\ytableausetup{boxsize=normal,nocentertableaux}
\end{example}

For $\bbb$ a box of a Young diagram, we write $\bbb^\downarrow$ for the box immediately below $\bbb$ and $\bbb^\rightarrow$ for the box immediately right of $\bbb$.

 We say that $\beta$ \newword{moves from row $i+1$ to $i$} during $K$-gromotion of $T$ if during an iteration of step (2), there is a box $\bbb \in \mathfrak{B}$ of row $i$ such that $\bbb$ gets filled with $\beta$ and $\bbb^\downarrow \in \mathfrak{C}$ gets filled with $\bullet$. For instance, in \Cref{ex:KproandKgro}, $2$ and $3$ move from row $2$ to row $1$, $6$ and $8$ move from row $3$ to row $2$. If $\beta$ moves from row $i+1$ to $i$ for some $i$, we say that $\beta$ \newword{moves up}.

As in the previous section, we define the flow path of an increasing tableau. 
\begin{definition}\label{def:prom_path_inc}
    We define the \newword{flow path} of $T\in\inc^q(\lambda)$ to be the subposet $P$ of $\lambda$ 
    constructed as follows: \begin{itemize}
    \item If $T$ has no entry $1$, $P$ is the empty poset.
    \item Otherwise, begin to build $P$ by including the minimal element of $\lambda$ (which has entry $1$). For each box $\bbb$ in $P$, include the cover relation $\bbb < \bbb'$ and the box $\bbb'$ where $\bbb' \in \{\bbb^\rightarrow, \bbb^\downarrow \}$ is the adjacent box to the right or below with the smallest entry; if both entries are equal, include both elements $\bbb^\rightarrow, \bbb^\downarrow$ and both cover relations.  Repeat this process until no new elements or cover relations are added.
    \end{itemize}
\end{definition}

\begin{example}
The increasing tableau $U$ below has flow path shown.
\[U= \ytableaushort{12358,2379,468,5}\hspace{1in}
    \raisebox{-2.5cm}{\begin{tikzpicture}
        \node (1) at (0,0) {$\mathbf{1}$};
        \node (2) at (1,0) {$\mathbf{2}$};
        \node (3) at (2,0) {$\mathbf{3}$};
        \node (5) at (3,0) {$\mathbf{5}$};
        \node (7) at (4,0) {$\mathbf{8}$};
        \node (2second) at (0,-1) {$\mathbf{2}$};
        \node (3second) at (1,-1) {$\mathbf{3}$};
        \node (6) at (2,-1) {\textcolor{gray}{7}};
        \node (9) at (3,-1) {\textcolor{gray}{9}};
        \node (4) at (0,-2) {\textcolor{gray}{4}};
        \node (6third) at (1,-2) {$\mathbf{6}$};
        \node (8) at (2,-2) {$\mathbf{8}$};
        \node (5fourth) at (0,-3) {\textcolor{gray}{5}};
        \draw[flow] (1) to (2);
        \draw[flow] (1) to (2second);
        \draw[noflow] (2) to (3) to (5) to (7);
        \draw[noflow] (2second) to (3second) to (6) to (9);
        \draw[noflow] (4) to (6third) to (8);
        \draw[noflow] (2second) to (4);
        \draw[-] (2) to (3second) to (6third);
        \draw[noflow] (3) to (6) to (8);
        \draw[noflow] (5) to (9);
        \draw[noflow] (4) to (5fourth);
        \draw[flow] (2) to (3);
        \draw[flow] (2) to (3second);
        \draw[flow] (2second) to (3second);
        \draw[flow] (3) to (5);
        \draw[flow] (5) to (7);
        \draw[flow] (3second) to (6third);
        \draw[flow] (6third) to (8);
    \end{tikzpicture}}\]
    Note that in this case, the flow path is not an induced subposet of $\lambda$. However, flow paths of increasing tableaux often are induced subposets, and those of standard tableaux always are.
\end{example}

\begin{example}
Recall the tableau $T$ from \Cref{ex:inc_latticeword}. Its flow path is shown to its right. This example shows that the flow path is not determined by its set of boxes; all the boxes are in the flow path, but this does not indicate which arrows are included.
\[
    T = \ytableaushort{1 2 3 5, 2 4 5 8, 4 6 7 9, 6 8 {10} {11}}\hspace{1in}
    \raisebox{-2.25cm}{\begin{tikzpicture}
        \node (1) at (0,0) {$\mathbf{1}$};
        \node (2) at (1,0) {$\mathbf{2}$};
        \node (3) at (2,0) {$\mathbf{3}$};
        \node (5) at (3,0) {$\mathbf{5}$};
        \node (2second) at (0,-1) {$\mathbf{2}$};
        \node (4) at (1,-1) {$\mathbf{4}$};
        \node (5second) at (2,-1) {$\mathbf{5}$};
        \node (6) at (3,-1) {$\mathbf{8}$};
        \node (4third) at (0,-2) {$\mathbf{4}$};
        \node (6third) at (1,-2) {$\mathbf{6}$};
        \node (7) at (2,-2) {$\mathbf{7}$};
        \node (9) at (3,-2) {$\mathbf{9}$};
        \node (6fourth) at (0,-3) {$\mathbf{6}$};
        \node (8fourth) at (1,-3) {$\mathbf{8}$};
        \node (10) at (2,-3) {$\mathbf{10}$};
        \node (11) at (3,-3) {$\mathbf{11}$};
        \draw[noflow] (1) to (2) to (3) to (5) to (6) to (5second) to (4) to (2second) to (1) to (2second) to (4third) to (6third) to (4) to (6third) to (7) to (5second) to (7) to (9) to (6);
        \draw[noflow] (4third) to (6fourth) to (8fourth) to (6third) to (8fourth) to (10) to (7) to (9) to (11) to (10);
        \draw[noflow] (2) to (4) to (5second) to (3);
        \draw[flow] (1) to (2);
        \draw[flow] (2) to (3);
        \draw[flow] (3) to (5);
        \draw[flow] (3) to (5second);
        \draw[flow] (1) to (2second);
        \draw[flow] (2second) to (4);
        \draw[flow] (4) to (5second);
        \draw[flow] (5) to (6);
        \draw[flow] (2second) to (4third);
        \draw[flow] (4third) to (6third);
        \draw[flow] (4third) to (6fourth);
        \draw[flow] (5second) to (7);
        \draw[flow] (6third) to (7);
        \draw[flow] (7) to (9);
        \draw[flow] (9) to (11);
        \draw[flow] (6) to (9);
        \draw[flow] (6fourth) to (8fourth);
        \draw[flow] (8fourth) to (10);
        \draw[flow] (10) to (11);
    \end{tikzpicture}}
    \]
\end{example}

\begin{proposition}
    The flow path of $T\in\inc^q(\lambda)$ is the poset whose elements are all boxes that contain $\bullet$ at some point during promotion and whose cover relations are of the form $\bbb< \bbb^\downarrow$ whenever the entry in $\bbb^\downarrow$ moves up and $\bbb < \bbb^\rightarrow$ whenever the entry in $\bbb^\rightarrow$ moves left.
\end{proposition}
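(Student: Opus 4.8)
The plan is to prove that the two descriptions of the flow path coincide: the greedy combinatorial construction of \Cref{def:prom_path_inc} on one hand, and the dynamical description via the trajectory of the symbol $\bullet$ on the other. The whole argument rests on one preliminary observation: since $T$ is increasing, every box $\bbb$ has strictly smaller entries in its left and upper neighbors than in $\bbb$ itself. Processing values in increasing order during $K$-promotion, I would first argue that $\bullet$ can only ever enter a box from the left or from above; equivalently, $\bullet$ never moves left or up, only right or down. Indeed, at the step processing a value $i$, any box currently carrying $\bullet$ held some entry $<i$ before being vacated, so a box of entry $i$ lying in $\mathfrak{C}$ (adjacent to a $\bullet$-box) must sit strictly to the right of, or below, that $\bullet$-box, never to its left or above.

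Next I would set up the main induction. Write $v(\bbb)$ for the entry of $T$ in $\bbb$, and let $m(\bbb)$ be the smallest entry among the existing right and lower neighbors of $\bbb$ (with $m(\bbb)=\infty$ if $\bbb$ has none). Let $P$ be the poset of \Cref{def:prom_path_inc}. The key claim, proved by induction on $i$, is that at the start of the step processing value $i$, the boxes currently containing $\bullet$ are exactly $\{\bbb\in P : v(\bbb)<i\le m(\bbb)\}$. For the inductive step I would verify two directions. First (forward), a box $\bbb$ of entry $i$ in $\mathfrak{C}$ is adjacent to some $\bullet$-box $\bbb'$; by the preliminary observation $\bbb$ is a right or lower neighbor of $\bbb'$, and since the inductive hypothesis gives $v(\bbb')<i\le m(\bbb')$ while $\bbb$ realizes entry $i$ as such a neighbor, we get $m(\bbb')\le i$, hence $m(\bbb')=i$; thus $\bbb$ is a minimizing neighbor of $\bbb'$, i.e.\ a cover $\bbb'\lessdot\bbb$ in $P$, and in particular $\bbb\in P$. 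Second (backward), if $\bbb\in P$ has $v(\bbb)=i$, it was added as a minimizing right/lower neighbor of some $\bbb'\in P$ with $\bbb'\lessdot\bbb$, so $v(\bbb')<i$ and $m(\bbb')=i$; by the inductive hypothesis $\bbb'$ holds $\bullet$, so $\bbb$ is filled with $\bullet$ at this step. Finally, the refill rule of step~(2) assigns entry $i$ to exactly those $\bullet$-boxes $\bbb'$ adjacent to such a $\bbb$, i.e.\ exactly those with $m(\bbb')=i$; removing these and inserting the new entry-$i$ boxes turns $\{\bbb\in P: v(\bbb)<i\le m(\bbb)\}$ into $\{\bbb\in P: v(\bbb)<i+1\le m(\bbb)\}$, completing the induction.

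With the invariant in hand the conclusion is immediate. Every element of $P$ receives $\bullet$ (at the step processing its own entry), and conversely every box that ever holds $\bullet$ lies in $P$, so the two underlying sets of boxes agree. The induction further shows that $\bullet$ travels from $\bbb'$ to a neighbor exactly along the covers of $P$: a move of $\bullet$ from $\bbb'$ down to $\bbb'^\downarrow$ is precisely the entry of $\bbb'^\downarrow$ moving up into $\bbb'$, giving the cover $\bbb'\lessdot\bbb'^\downarrow$, and a move of $\bullet$ from $\bbb'$ right to $\bbb'^\rightarrow$ is the entry of $\bbb'^\rightarrow$ moving left, giving $\bbb'\lessdot\bbb'^\rightarrow$. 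These are exactly the cover relations asserted in the proposition. Since $K$-promotion and $K$-gromotion differ only in step~(3), the set of boxes ever containing $\bullet$, and the directions of its moves, are the same for both.

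The main obstacle is the $K$-theoretic bookkeeping of simultaneous moves. Because a single entry $i$ can be adjacent to several $\bullet$-boxes (a ``merge'') and a $\bullet$ can split into two equal neighbors $\bbb'^\rightarrow,\bbb'^\downarrow$ (a ``split''), I must confirm that the per-box invariant survives these coincidences: a merge produces a box $\bbb\in P$ covering several elements $\bbb'_1,\dots,\bbb'_t$ each with $m(\bbb'_s)=i$, while a split produces a box $\bbb'$ covered by both $\bbb'^\rightarrow$ and $\bbb'^\downarrow$. Since the invariant is stated box-by-box in terms of $v(\bbb)$ and $m(\bbb)$, and since step~(2) acts on all of $\mathfrak{B}$ and $\mathfrak{C}$ at once, these cases are handled uniformly; the only real care needed is to check that ``minimizing right/lower neighbor'' in \Cref{def:prom_path_inc} and ``entry $i$ adjacent to a current $\bullet$-box'' in step~(2) select the same boxes, and the same cover relations, even in the presence of ties and of multiple predecessors.
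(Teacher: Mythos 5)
Your proof is correct and follows the same route as the paper, which disposes of this proposition in one line by observing that step~(2) of $K$-promotion refills each $\bullet$ with the smallest entry in an adjacent box, exactly matching the greedy construction of \Cref{def:prom_path_inc}. Your explicit invariant (that the $\bullet$-boxes at the start of step $i$ are precisely the $\bbb\in P$ with $v(\bbb)<i\le m(\bbb)$), together with the preliminary observation that $\bullet$ only travels right or down, simply supplies the details --- including the merge/split bookkeeping --- that the paper leaves to the reader.
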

\begin{proof}
    This follows from step (2) of the definition of $K$-promotion; during this step, $\bullet$ is filled with the smallest entry in an adjacent box. 
\end{proof}

We now introduce some definitions that will be useful in \Cref{sec:2row}. Although these definitions make sense for arbitrary shapes $\lambda$, an analogue of \Cref{prop:promotion_via_balance_points} is not available in this generality. We will however prove an analogue for the case $\lambda = (\lambda_1, \lambda_2)$ in \Cref{thm:Kpromotion_2row_via_balance_points}. We give the general definition here as we expect it to be useful in future investigations.

\begin{definition}\label{def:Kbalance_point}
    Let $T \in \inc^q(\lambda)$ with lattice word $\lattice(T) = w = w_1 \ldots w_q$. An \newword{$i$-balance point} of $w$ is a $j$ such that there is an equal number of $i$'s and $(i+1)$'s appearing in the subsets $w_1, w_2, \dots, w_j$. We say that $j$ is an \newword{$i$-teetering point} of $w$ if $i$ and $(i+1)$ both appear in $w_j$ and the number of $i$'s appearing in the subsets $w_1, w_2, \dots, w_j$ is exactly one more than the number of $(i+1)$'s.
\end{definition}

\begin{remark}\label{rem:balance_and_teetering_on_the_tab}
We can also locate $i$-balance points and $i$-teetering points of $w=\lattice(T)$ by looking at $T$. Here, $j$ is an $i$-teetering point if $j$ appears in box $\bbb$ of row $i+1$ and also appears in box $\bbb^{\uparrow\rightarrow}$ of row $i$.
Similarly, $j$ is an $i$-balance point if $j$ appears in box $\bbb$ of row $i+1$ and the box $\bbb^{\uparrow\rightarrow}$ either does not exist or contains some $j' > j$.
\end{remark}

\begin{remark}
If we replace $\lattice(T) = w$ with a word $w'$ in the alphabet $[r]$ by writing the elements of each subset in increasing order, then the balance points of $w'$ correspond to the balance points of $w$. If we instead produce a word $w''$ by writing the elements of each subset in decreasing order, then the balance points of $w''$ correspond to both the balance and teetering points of $w$.
\end{remark}

\begin{example}\label{ex:inc_balance}
Recall the lattice word
    \[
    \lattice(T) =w =  1 \{1, 2\} 1 \{2, 3\} \{1, 2\} \{3, 4\} 3 \{2, 4\} 3 4 4
    \]
    from \Cref{ex:inc_latticeword}. 
    There is a $1$-balance point of $w$ at $8$ because there are four instances of $2$ and four instances of $1$ among the sets $w_1, \dots, w_8$. The reader may check that there is no earlier $1$-balance point; however, $2$ and $5$ are $1$-teetering points. 
    
    The first $2$-balance point is $7$ with an earlier $2$-teetering point $4$ and a later $2$-balance point $9$. The only $3$-balance point is $11$, but there is also a $3$-teetering point $6$.  

    The reader may also check the characterization of balance and teetering points from \Cref{rem:balance_and_teetering_on_the_tab} against the tableau illustrated in \Cref{ex:inc_latticeword}.
\end{example}

\subsection{Promotion digraphs of increasing tableaux}
\label{sec:inc_prom_digraph}
We now introduce promotion digraphs for increasing tableaux, extending the promotion digraphs for standard tableaux defined in \Cref{sec:standard_prom_digraph}. For a summary of the results on promotion digraphs proved in this and the next two subsections, see \Cref{thm:inc_digraph_summary}.
\begin{definition}
        Let $T \in \inc^q(\lambda)$. For $i\geq 1$, we construct a directed graph $\prom_i(T)$ with vertex set $[q]$. Suppose that $\beta$ moves from row $i+1$ to $i$ in the application of gromotion to $\gromotion^{\alpha-1}(T)$. Then we draw an arrow $\alpha \to \beta$ in $\prom_i(T)$. We call $\prom_i(T)$ the \newword{$i$th promotion digraph} of the increasing tableau $T$.
\end{definition}

\begin{example}\label{ex:inc_promotion_digraph}
    Let $T \in \inc^9(5,4,3)$ be as in \Cref{ex:KproandKgro}. Gromotion proceeds as follows.

    \begin{align*}
    &\ytableaushort{12357,2369,468}\rightarrow\ytableaushort{2357{\mathbf{1}},3689,48{\mathbf{1}}}\rightarrow\ytableaushort{3579{\mathbf{1}},468{\mathbf{2}},8{\mathbf{1}}{\mathbf{2}}}\rightarrow\ytableaushort{4579{\mathbf{1}},68{\mathbf{2}}{\mathbf{3}},8{\mathbf{1}}{\mathbf{3}}}\rightarrow\ytableaushort{579{\mathbf{1}}{\mathbf{4}},68{\mathbf{2}}{\mathbf{3}},8{\mathbf{1}}{\mathbf{3}}}
        \rightarrow\\ &\ytableaushort{679{\mathbf{1}}{\mathbf{4}},8{\mathbf{1}}{\mathbf{2}}{\mathbf{3}},{\mathbf{1}}{\mathbf{3}}{\mathbf{5}}}\rightarrow\ytableaushort{79{\mathbf{1}}{\mathbf{3}}{\mathbf{4}},8{\mathbf{1}}{\mathbf{2}}{\mathbf{6}},{\mathbf{1}}{\mathbf{3}}{\mathbf{5}}}\rightarrow\ytableaushort{89{\mathbf{1}}{\mathbf{3}}{\mathbf{4}},{\mathbf{1}}{\mathbf{2}}{\mathbf{5}}{\mathbf{6}},{\mathbf{3}}{\mathbf{5}}{\mathbf{7}}}\rightarrow\ytableaushort{9{\mathbf{1}}{\mathbf{3}}{\mathbf{4}}{\mathbf{8}},{\mathbf{1}}{\mathbf{2}}{\mathbf{5}}{\mathbf{6}},{\mathbf{3}}{\mathbf{5}}{\mathbf{7}}}
        \rightarrow \ytableaushort{{\mathbf{1}}{\mathbf{2}}{\mathbf{3}}{\mathbf{4}}{\mathbf{8}},{\mathbf{2}}{\mathbf{5}}{\mathbf{6}}{\mathbf{9}},{\mathbf{3}}{\mathbf{7}}{\mathbf{9}}}
    \end{align*}

Thus we have the following promotion digraphs.
\[
\raisebox{-3cm}{
    \begin{tikzpicture}[scale=2, every node/.style={minimum size=3mm}]
    \node (label) at (-1.4,-0.6) {$\prom_1(T) =$};
    \node (1) at (0.4, -0.2) {1};   
    \node (2) at (1.2, -0.2) {2}; 
    \node (3) at (0.8, -1) {3};  
    \node (4) at (1.6, -1) {4};   
    \node (5) at (-0.8, -1) {5};    
    \node (6) at (-0.0, -1) {6};  
    \node (7) at (0, -1.5) {7};     
    \node (8) at (0.8, -1.5) {8};  
    \node (9) at (0.8, 0.6) {9};   
    \draw[->, thick] (1) -- (2);   
    \draw[->, thick] (1) -- (3);   
    \draw[->, thick] (2) -- (3);   
    \draw[->, thick] (3) -- (4);   
    \draw[->, thick] (9) -- (1);   
    \draw[->, thick] (2) to (9);   
    \draw[->, thick] (9) to (2);   
    \draw[->, thick] (5) -- (6);   
    \draw[->, thick] (6) -- (3);   
    \draw[->, thick] (7) -- (8);   
\end{tikzpicture}}\hspace{.2in}
\raisebox{-3cm}{
    \begin{tikzpicture}[scale=2, every node/.style={minimum size=3mm}]
    \node (label) at (-1.4,-0.6) {$\prom_2(T) =$};
    \node (1) at (1.1, -0.5) {1};   
    \node (7) at (0.7, 0) {7}; 
    \node (8) at (0.7, -1) {8};  
    \node (9) at (-0.5, -0.5) {9};   
    \node (5) at (0.3, -0.5) {5};    
    \node (6) at (1.9, -0.5) {6};  
    \node (3) at (0, -1.5) {3};     
    \node (2) at (0.8, -1.5) {2};  
    \node (4) at (1.6, -1.5) {4};   
    \draw[->, thick] (9) -- (5);
    \draw[->, thick] (5) -- (1);
    \draw[->, thick] (1) -- (6);
    \draw[->, thick] (7) -- (5);   
    \draw[->, thick] (7) -- (1);  
    \draw[->, thick] (5) -- (8);  
    \draw[->, thick] (1) -- (8);     
    \draw[->, thick] (3) -- (2);   
    \draw[->, thick] (2) -- (4);   
\end{tikzpicture}\qedhere}
\]
\end{example}

\begin{example}\label{ex:inc_promotion_digraph_rectangle}
    Consider $U\in\inc^{10}(3\times 4)$ shown below. \[U=\ytableaushort{1236,4569,789{10}}\]
    We compute $10$ iterations of gromotion on $U$ and use this to construct $\prom_1(U)$ and $\prom_2(U)$. 
    \begin{align*}
        \ytableaushort{1236,4569,789{10}}&\rightarrow \ytableaushort{2369, 459{10},78{10}{\mathbf{1}}}\rightarrow\ytableaushort{3569,489{10},7{10}{\mathbf{1}}{\mathbf{2}}}\rightarrow\ytableaushort{4569,789{10},{10}{\mathbf{1}}{\mathbf{2}}{\mathbf{3}}}\rightarrow\ytableaushort{569{10},78{10}{\mathbf{3}},{10}{\mathbf{1}}{\mathbf{2}}{\mathbf{4}}}\rightarrow\ytableaushort{689{10},7{10}{\mathbf{2}}{\mathbf{3}},{10}{\mathbf{1}}{\mathbf{4}}{\mathbf{5}}}\\ &\rightarrow \ytableaushort{789{10},{10}{\mathbf{1}}{\mathbf{2}}{\mathbf{3}},{\mathbf{1}}{\mathbf{4}}{\mathbf{5}}{\mathbf{6}}}\rightarrow\ytableaushort{89{10}{\mathbf{3}},{10}{\mathbf{1}}{\mathbf{2}}{\mathbf{6}},{\mathbf{1}}{\mathbf{4}}{\mathbf{5}}{\mathbf{7}}}\rightarrow\ytableaushort{9{10}{\mathbf{2}}{\mathbf{3}},{10}{\mathbf{1}}{\mathbf{5}}{\mathbf{6}},{\mathbf{1}}{\mathbf{4}}{\mathbf{7}}{\mathbf{8}}}\rightarrow\ytableaushort{{10}{\mathbf{1}}{\mathbf{2}}{\mathbf{3}},{\mathbf{1}}{\mathbf{4}}{\mathbf{5}}{\mathbf{6}},{\mathbf{4}}{\mathbf{7}}{\mathbf{8}}{\mathbf{9}}}\rightarrow\ytableaushort{{\mathbf{1}}{\mathbf{2}}{\mathbf{3}}{\mathbf{6}},{\mathbf{4}}{\mathbf{5}}{\mathbf{6}}{\mathbf{9}},{\mathbf{7}}{\mathbf{8}}{\mathbf{9}}{\mathbf{10}}}
    \end{align*}
  \[ \raisebox{.5in}{ $\prom_1(U)=$}\begin{tikzpicture}
        \node (1) at (0,0) {$1$};
        \node (6) at (1,0) {$6$};
        \node (7) at (2,0) {$7$};
        \node (9) at (1, -1) {$9$};
        \node (10) at (0,-2) {$10$};
        \node (4) at (1,-2) {$4$};
        \node (3) at (2,-2) {$3$};
        \node (2) at (3,-.5) {$2$};
        \node (5) at (2.5,-1.5) {$5$};
        \node (8) at (3.5,-1.5) {$8$};
        \draw[->, thick] (1) to (6);
        \draw[->, thick] (6) to (7);
        \draw[->, thick] (7) to (3);
        \draw[->, thick] (1) to (9);
        \draw[->, thick] (9) to (1);
        \draw[->, thick] (9) to (10);
        \draw[->, thick] (4) to (9);
        \draw[->, thick] (4) to (10);
        \draw[->, thick] (3) to (4);
        \draw[->, thick] (10) to (1);
        \draw[->, thick] (10.north) to [out=140,in=140, looseness=1.8] (6.north);
        \draw[->, thick] (5) to (8);
        \draw[->, thick] (8) to (2);
        \draw[->, thick] (2) to (5);
    \end{tikzpicture}\hspace{.3in}
\raisebox{.5in}{$\prom_2(U)=$}
\begin{tikzpicture}
        \node (1) at (0,0) {$1$};
        \node (6) at (1,0) {$6$};
        \node (7) at (2,0) {$7$};
        \node (9) at (1, -1) {$9$};
        \node (10) at (0,-2) {$10$};
        \node (10b) at (-.1,-1.8) {};
        \node (4) at (1,-2) {$4$};
        \node (3) at (2,-2) {$3$};
        \node (2) at (3,-.5) {$2$};
        \node (5) at (2.5,-1.5) {$5$};
        \node (8) at (3.5,-1.5) {$8$};
        \draw[<-, thick] (1) to (6);
        \draw[<-, thick] (6) to (7);
        \draw[<-, thick] (7) to (3);
        \draw[<-, thick] (1) to (9);
        \draw[<-, thick] (9) to (1);
        \draw[<-, thick] (9) to (10);
        \draw[<-, thick] (4) to (9);
        \draw[<-, thick] (4) to (10);
        \draw[<-, thick] (3) to (4);
        \draw[<-, thick] (10) to (1);
        \draw[<-, thick] (10b.north) to [out=140,in=140, looseness=1.8] (6.north);
        \draw[<-, thick] (5) to (8);
        \draw[<-, thick] (8) to (2);
        \draw[<-, thick] (2) to (5);
    \end{tikzpicture} \qedhere\]
\end{example}

We next establish some properties of promotion digraphs of increasing tableaux.
\begin{lemma}\label{lem:inc_no_loops}
 Let $T \in \inc^q(\lambda)$. Then the digraph $\prom_i(T)$ has no loops.
\end{lemma}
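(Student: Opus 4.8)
The plan is to mimic the proof of \Cref{prop:no_loops} from the standard case, adapting it to account for the fact that in $K$-gromotion an entry $\beta$ can move up more than once (and hence $\prom_i(T)$ need not be a partial function). The core observation is unchanged: a loop $\alpha \to \alpha$ in $\prom_i(T)$ would require that, during the $\alpha$th application of $K$-gromotion (i.e.\ the application to $\gromotion^{\alpha-1}(T)$), the entry $\alpha$ moves from row $i+1$ to row $i$. So I first recall what the entry $\alpha$ is doing at the start of that application. By the construction of the promotion digraph, we track labels through $K$-gromotion while recording the alphabet, and the label $\alpha$ is precisely the one deleted from the upper-left corner in step (1) of the $\alpha$th application: it is the smallest element of the current alphabet $\grow^{\alpha-1}(\cA)$, occupying box $\mathfrak{c}$ at position $(1,1)$.

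The key step is then to argue that a label sitting in row $1$ cannot ``move from row $i+1$ to row $i$'' for any $i \geq 1$. I would unpack the definition of \emph{moves from row $i+1$ to $i$}: this requires a box $\bbb$ in row $i$ that gets filled with $\beta$ while $\bbb^\downarrow$, lying in row $i+1$, gets filled with $\bullet$ during an iteration of step (2). For $i \geq 1$ this means $\beta$ occupied a box in row $i+1 \geq 2$ immediately before that iteration. But the label $\alpha$ is deleted in step (1) and replaced by $\bullet$; it never re-enters the tableau as an entry during steps (1)--(2), since those steps only relocate the $\bullet$s and the surviving entries $a_2, \dots, a_n$, and $\alpha = a_1$ is reinstated (as the new largest letter) only in step $(3^*)$. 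Hence $\alpha$ is never present in any row during the sliding phase, so in particular it is never present in row $i+1$, and therefore $\alpha$ cannot move from row $i+1$ to row $i$. This contradicts the existence of the loop.

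I would phrase this cleanly by separating two cases or, more economically, by noting directly that whatever label moves up from row $i+1$ must be one of the \emph{surviving} entries $a_2, \dots, a_n$, never the deleted minimal label $a_1 = \alpha$. The only subtlety relative to the standard case is that step (2) of $K$-gromotion is a more elaborate simultaneous slide (a whole set $\mathfrak{C}$ of boxes can be involved), so I want to be careful that ``$\beta$ moves from row $i+1$ to $i$'' is read off the formal definition rather than from a single-box intuition; but since that definition still only ever fills boxes with entries that are already present in the tableau at the start of the slide, the argument goes through verbatim. I expect the main (and really the only) obstacle to be purely expository: making sure the reader sees that the deleted corner entry genuinely disappears from the filling for the entire duration of steps (1)--(2), so that it is unavailable to be the entry $\beta$ that moves up. Everything else is a direct transcription of the no-loops argument for standard tableaux.
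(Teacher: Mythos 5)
Your proof is correct and follows essentially the same route as the paper's: both arguments reduce to the observation that at the $\alpha$th application of gromotion the label $\alpha$ is the minimal letter of the current alphabet, occurring only in the upper-left corner, and hence cannot be the entry that moves from row $i+1$ to row $i$. Your extra care about the deleted entry being literally absent during step (2) is a slightly more explicit phrasing of the same point the paper makes by saying the only label $\alpha$ is in row $1$.
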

\begin{proof}
 If an arrow in $\prom_i(T)$ with tail at vertex $j$ exists, its head cannot also be at vertex $j$. This is because immediately before the $j$th application of gromotion, the tableau has a label $j$ in its upper left corner and no label $j$ in any other part of the tableau. Clearly, the label $j$ cannot move from row $i+1$ to row $i$, since the only label $j$ is in row $1$.
\end{proof}

Compare \Cref{lem:inc_no_loops} with \Cref{prop:no_loops}. In the standard case, each vertex has outdegree $0$ or $1$, but for increasing tableaux, there can be higher outdegrees since more than one number can move from row $i$ to row $i+1$; see \Cref{ex:inc_promotion_digraph}. 

Unlike the case for standard tableaux (see \Cref{cor:uniquely_determined}), it is possible for distinct increasing tableaux to share the same promotion digraph. In particular, while \Cref{thm:antiexcedance} makes it easy to read off the top row of a standard tableau $T$ from $\prom_1(T)$, the top row of an increasing tableau $U$ is not generally determined by $\prom_1(U)$.

\begin{example}\label{ex:cant_get_shape}
    Consider the tableaux below in the alphabet $1 < \dots < 5$.
    \medskip
    \[T_1=\ytableaushort{135,2}\hspace{.2in}T_2=\ytableaushort{145,2}\hspace{.2in}T_3=\ytableaushort{15,2}\hspace{.2in}T_4=\ytableaushort{125,2}\]
    They all share the following.
    \[
    \begin{tikzpicture}[scale=2, every node/.style={minimum size=3mm}]
    \node (label) at (-2.6,-0.5) {$\prom_1(T_1) =\prom_1(T_2) =\prom_1(T_3) =\prom_1(T_4) =$};
    \node (5) at (-0.5, -0.5) {5};   
    \node (1) at (0.3, -0.5) {1};  
    \node (2) at (1.1, -0.5) {2};  
    \node (3) at (1.9, -0.5) {3}; 
    \node (4) at (2.7, -0.5) {4}; 
    \draw[->, thick] (5) -- (1);
    \draw[->, thick] (1) -- (2);
\end{tikzpicture}
\]
Since the tableaux have only two rows, there are no other nontrivial promotion digraphs. 
\end{example}

\begin{example}\label{ex:cant_get_top_row}
    In the previous example, the tableaux have different sets of entries or have different shapes. However, even packed increasing tableaux of the same shape can have the same tuple of promotion digraphs. Let
\[  T=\ytableaushort{12,2,3}\rightarrow\ytableaushort{2{\mathbf{1}},3,{\mathbf{1}}}\rightarrow\ytableaushort{3{\mathbf{1}},{\mathbf{1}},{\mathbf{2}}}\rightarrow\ytableaushort{{\mathbf{1}}{\mathbf{3}},{\mathbf{2}},{\mathbf{3}}}
\]
and let
\[
T' = \ytableaushort{13,2,3}\rightarrow\ytableaushort{23,3,{\mathbf{1}}}\rightarrow\ytableaushort{3{\mathbf{2}},{\mathbf{1}},{\mathbf{2}}}\rightarrow\ytableaushort{{\mathbf{1}}{\mathbf{2}},{\mathbf{2}},{\mathbf{3}}}.
\]
Then we have the following promotion digraphs.
\[
\begin{tikzpicture}[scale=2, every node/.style={minimum size=3mm}]
    \node (label) at (-1.0,0.6) {$\prom_1(T) = \prom_1(T') =$};
    
    \node (1) at (0, 0) {1};             
    \node (2) at (1, 0) {2};             
    \node (3) at (0.5, {sqrt(3)/2}) {3}; 
    \draw[->, thick] (1) -- (2);
    \draw[->, thick] (2) -- (3);
    \draw[->, thick] (3) -- (1);
\end{tikzpicture}
\quad  \quad 
\begin{tikzpicture}[scale=2, every node/.style={minimum size=3mm}]
    \node (label) at (-1.0,0.6) {$\prom_2(T) = \prom_2(T') =$};
    \node (1) at (0, 0) {1};             
    \node (2) at (1, 0) {2};             
    \node (3) at (0.5, {sqrt(3)/2}) {3}; 
    \draw[<-, thick] (1) -- (2);
    \draw[<-, thick] (2) -- (3);
    \draw[<-, thick] (3) -- (1);
\end{tikzpicture} \qedhere
\]
\end{example}

By the preceding examples, promotion digraphs do not determine the top row of their corresponding tableau. However, they do determine the bottom row of the tableau. Indeed, we have the following. Recall the definition of excedances from \Cref{def:excedance}.

\begin{lemma}
\label{lem:bottom_i_rows}
    Suppose $T \in \inc^q(\lambda)$ where $\lambda$ has $r$ rows. Then $\ell$ appears in the union of the bottom $i$ rows if and only if $\ell$ is an excedance of $\prom_{r-i}(T)$.
\end{lemma}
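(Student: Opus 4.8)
The plan is to set $j \coloneqq r-i$, so that we work with $\prom_j(T)$ and the bottom $i$ rows are exactly the rows of index $>j$, and to reinterpret both sides of the claimed equivalence in terms of the trajectory of the single value $\ell$ through the gromotion orbit $T,\gromotion(T),\gromotion^2(T),\dots$. By \Cref{def:excedance}, $\ell$ is an excedance of $\prom_j(T)$ exactly when some edge $\alpha\to\ell$ has $\alpha<\ell$, i.e.\ when $\ell$ moves from row $j+1$ to row $j$ during the application of gromotion to $\gromotion^{\alpha-1}(T)$ for some $\alpha\le\ell-1$. Thus the lemma reduces to: $\ell$ occupies some row $>j$ in $T$ if and only if $\ell$ makes such a move during one of the first $\ell-1$ gromotions. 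This parallels the standard-tableau argument of \Cref{thm:antiexcedance} (the two statements in fact coincide under $i\leftrightarrow r-i$); the real content is to control the repeated values and the $K$-theoretic merging and splitting absent from the standard case.

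I would isolate three structural facts about the value $\ell$. First, \emph{persistence}: at the $\alpha$th gromotion the deleted minimum is $\alpha$ (the minimum of $\grow^{\alpha-1}([q])$), which is $\ne\ell$ for every $\alpha\le\ell-1$; hence $\ell$ is neither removed in step (1) nor created by step $(3^*)$ during these steps, and since step (2) alters $\ell$ only by swapping an occurrence of $\ell$ with an adjacent $\bullet$ while simultaneously turning an adjacent $\bullet$ into $\ell$, an occurrence is never destroyed without being replaced, so $\ell$ present at time $t$ remains present at time $t+1$ for all $t\le\ell-2$. Second, \emph{monotonicity}: by the standard theory of $K$-jeu de taquin \cite{Thomas.Yong:K,Pechenik:CSP}, a corner slide moves every occurrence of $\ell$ weakly up and to the left, so the lowest (largest-indexed) row $b_t$ containing $\ell$ is non-increasing in $t$, drops by at most $1$ per step, and no occurrence of $\ell$ ever lands in a strictly lower row than one it previously occupied. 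Third, \emph{migration}: at time $\ell-1$ the minimum of $\grow^{\ell-1}([q])$ is $\ell$ itself, and the minimum of an increasing tableau occupies only the corner box (as in the proof of \Cref{lem:inc_no_loops}), so $b_{\ell-1}=1$.

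For the forward direction, assume $\ell$ lies in some row $>j$ of $T$, so $b_0\ge j+1$. By persistence $\ell$ is present throughout, and by migration $b_{\ell-1}=1\le j$. Since $b_t$ is non-increasing and drops by at most $1$, there is a step $\alpha\le\ell-1$ with $b_{\alpha-1}=j+1$ and $b_\alpha=j$: at time $\alpha-1$ some occurrence of $\ell$ sits in row $j+1$ with none below it, and at time $\alpha$ none remain in rows $>j$. Because an occurrence of $\ell$ cannot move down and a leftward move keeps it in row $j+1$, the only way to vacate row $j+1$ is an upward move into row $j$; so during gromotion $\alpha$ some box $\bbb$ of row $j$ receives $\ell$ while $\bbb^\downarrow$ becomes $\bullet$, which is exactly a move of $\ell$ from row $j+1$ to row $j$ and yields the edge $\alpha\to\ell$ with $\alpha<\ell$. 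Conversely, an excedance is witnessed by an edge $\alpha\to\ell$ with $\alpha\le\ell-1$, forcing an occurrence of $\ell$ in row $j+1$ at time $\alpha-1$, hence $b_{\alpha-1}\ge j+1$. Were every occurrence of $\ell$ in $T$ confined to rows $\le j$, then monotonicity—together with the fact that during gromotions $1,\dots,\ell-1$ no $\ell$ is created by step $(3^*)$ and no split sends an occurrence to a strictly lower row—would keep all occurrences in rows $\le j$ for all $t\le\ell-1$, contradicting $b_{\alpha-1}\ge j+1$. Thus $\ell$ appears in some row $>j$, i.e.\ in the bottom $i$ rows.

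The main obstacle is fact (b), the monotonicity of the $K$-jeu de taquin corner slide: one must verify that the $\bullet$'s propagate only downward and rightward (equivalently, entries move weakly up and left) even under the merging and splitting peculiar to $K$-gromotion. I would either invoke this from \cite{Thomas.Yong:K,Pechenik:CSP} or prove it by induction on the processed value $a_k$, maintaining the invariant that when $a_k$ is processed no $\bullet$ has an $a_k$ immediately above or immediately to its left; granting this, the remaining bookkeeping (persistence, migration, and the ``exit only upward'' observation) is routine.
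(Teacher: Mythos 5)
Your proposal is correct and follows essentially the same route as the paper's proof: both arguments track the value $\ell$ through the gromotion orbit, use that $\ell$ occupies only the upper-left corner of $\gromotion^{\ell-1}(T)$, and conclude that $\ell$ must cross from row $r-i+1$ to row $r-i$ during one of the first $\ell-1$ gromotions exactly when it starts in the bottom $i$ rows. You simply make explicit the persistence and weak-upward-motion facts that the paper treats as evident.
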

\begin{proof}
    Suppose that $\ell$ appears in the union of the bottom $i$ rows in $T$, say in row $k$ (and possibly in other rows as well).
    The label $\ell$ appears in only the upper left corner of $\gromotion^{\ell-1}(T)$. Hence, $\ell$ must pass through each intervening row $2, 3, \dots, k-1$ during an earlier iteration of gromotion. In particular, there is an earlier iteration of gromotion where an $\ell$ moves from row $r-i+1$ to row $r-i$. Thus $\prom_{r-i}(T)$ has an incoming arrow from a vertex $j < \ell$, so $\ell$ is an excedance of $\prom_{r-i}(T)$.

    Conversely, if $\ell$ is an excedance of $\prom_{r-i}(T)$, then $\ell$ passed from row $r-i+1$ to row $r-i$ during an earlier iteration of gromotion. Therefore $\ell$ must have begun in the bottom $i$ rows in $T$.
\end{proof}

\begin{corollary}\label{cor:recover_bottom}
    Suppose $T \in \inc^q(\lambda)$ where $\lambda$ has $r$ rows. Then $\prom_{r-1}(T)$ determines the bottom row of~$T$. Specifically, the entries of the bottom row of $T$ are exactly the excedances of $\prom_{r-1}(T)$. \qed
\end{corollary}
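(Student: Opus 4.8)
The plan is to specialize \Cref{lem:bottom_i_rows} to the case $i = 1$. With $i = 1$, the ``union of the bottom $i$ rows'' is simply the bottom row (row $r$) of $T$, and $r - i = r - 1$. Thus the lemma states directly that a value $\ell \in [q]$ appears in the bottom row of $T$ if and only if $\ell$ is an excedance of $\prom_{r-1}(T)$. Consequently the set of excedances of $\prom_{r-1}(T)$ coincides exactly with the set of entries appearing in the bottom row of $T$, which is the second assertion of the corollary.

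It then remains only to observe that this set of entries recovers the bottom row as an ordered filling, not merely as an unordered set of values. Since $T$ is an increasing tableau, the entries of any single row are strictly increasing from left to right; hence listing the excedances of $\prom_{r-1}(T)$ in increasing order reconstructs the bottom row uniquely, and in particular the number $\lambda_r$ of boxes in the bottom row equals the number of such excedances. Because the excedances of $\prom_{r-1}(T)$ depend only on the digraph $\prom_{r-1}(T)$ (via \Cref{def:excedance}), this shows that $\prom_{r-1}(T)$ determines the bottom row of $T$, establishing the first assertion.

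I do not anticipate any genuine obstacle: all of the substantive content is already packaged in \Cref{lem:bottom_i_rows}, whose proof tracks how each value must pass upward through every intervening row during iterated gromotion before reaching the upper-left corner. The corollary is precisely the $i = 1$ instance of that lemma, supplemented by the trivial remark that strict increase along a row converts the recovered set of values into a uniquely determined ordered row.
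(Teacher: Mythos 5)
Your proposal is correct and matches the paper's intended argument exactly: the corollary is stated with an immediate \qed precisely because it is the $i=1$ specialization of \Cref{lem:bottom_i_rows}, and your added remark that strict row-increase turns the recovered set of values into the ordered bottom row is a harmless (and accurate) elaboration.
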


\begin{corollary}\label{cor:new_in_row}
 Suppose $T \in \inc^q(\lambda)$. Then if vertex $\ell$ has positive indegree in any $\prom_i$, then the value $\ell$ appears in $T$.
If furthermore $\ell$ is a nonexcedance of $\prom_1(T)$, then $\ell$ appears in the first row of $T$ and in no other row.
\end{corollary}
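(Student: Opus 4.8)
The plan is to prove the two assertions separately, deriving the first from the mechanics of $K$-gromotion and the second as an immediate consequence of \Cref{lem:bottom_i_rows}.

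For the first assertion, I would argue that a value can only move up if it is present, and that the set of symbols occurring in a tableau is invariant under a single $\gromotion$. By definition, positive indegree of $\ell$ in some $\prom_i(T)$ means that $\ell$ moves from row $i+1$ to row $i$ during the gromotion applied to $\gromotion^{\alpha-1}(T)$ for some $\alpha$; in particular the symbol $\ell$ occurs in $\gromotion^{\alpha-1}(T)$. It then suffices to show that the set of symbols appearing in a tableau is unchanged by a single application of $\gromotion$, since iterating this backward gives that $\ell$ already occurs in $T = \gromotion^0(T)$. To establish the invariance I would examine the steps of $K$-gromotion. The smallest symbol $a_1$ is deleted in step $(1)$, but (provided it occurred) a $\bullet$ survives to the end and is refilled with $a_1$ in step $(3^*)$, so $a_1$ persists; if $a_1$ did not occur, nothing happens. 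For a symbol $a_j$ with $j \geq 2$, the only stage of step $(2)$ that can affect it is the stage with index $i = j$, and there, whenever a box holding $a_j$ becomes a $\bullet$ (i.e.\ enters $\mathfrak{C}$), an adjacent box of $\mathfrak{B}$ is simultaneously refilled with $a_j$; hence at least one copy of $a_j$ survives. Thus no symbol is destroyed, and since the only symbol ever written is one that was already deleted, none is created either, giving the invariance.

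For the second assertion, I would specialize \Cref{lem:bottom_i_rows} to $i = r-1$, so that $\prom_{r-i}(T) = \prom_1(T)$. That lemma then says $\ell$ occurs in the union of rows $2, \dots, r$ if and only if $\ell$ is an excedance of $\prom_1(T)$. Taking the contrapositive, $\ell$ being a nonexcedance of $\prom_1(T)$ is equivalent to $\ell$ occurring in none of rows $2, \dots, r$. Combining this with the first assertion, which guarantees that $\ell$ occurs somewhere in $T$, forces every occurrence of $\ell$ to lie in the first row; and since rows of an increasing tableau are strictly increasing, this is a single occurrence in row $1$.

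The bookkeeping for the second assertion is routine, being essentially a restatement of \Cref{lem:bottom_i_rows}. The step that needs genuine care is the value-set invariance underpinning the first assertion: one must confirm that the $K$-theoretic merging in step $(2)$, which can lower the multiplicity of a symbol, can never reduce a multiplicity to zero, and that the single deleted copy of $a_1$ is always restored by a surviving $\bullet$. I expect this verification—tracking the $\bullet$'s through the $K$-jeu-de-taquin and confirming at least one reaches step $(3^*)$—to be the main obstacle, though it should follow cleanly from the local description of step $(2)$.
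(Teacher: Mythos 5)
Your proposal is correct and follows essentially the same route as the paper: positive indegree forces an occurrence of $\ell$ in some $\gromotion^{\alpha-1}(T)$, which propagates back to $T$ because gromotion preserves the set of labels, and the nonexcedance claim is the $i = r-1$ case of \Cref{lem:bottom_i_rows}. The only difference is that you verify the label-set invariance in detail where the paper simply asserts it; that verification is sound.
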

\begin{proof}
If $\prom_i(T)$ has an arrow $j \to \ell$ for some $i$ and $j$, then some iteration of gromotion moves a label $\ell$ from row $i+1$ to row $i$. Since gromotion does not change the set of labels that appear, this means that $T$ has an instance of $\ell$.
By \Cref{lem:bottom_i_rows}, if $\ell$ is a nonexcedance of $\prom_1(T)$, then $\ell$ cannot appear outside the first row of $T$. 
\end{proof}

\begin{example}
Recall the tableau $T$ from \Cref{ex:inc_promotion_digraph} and its promotion digraphs. We see that the excedances of $\prom_2(T)$ are $4$, $6$, and $8$, which make up the bottom row of $T$ by \Cref{cor:recover_bottom}. The excedances of $\prom_1(T)$ are $2$, $3$, $4$, $6$, $8$, and $9$, meaning that the bottom two rows are $T$ are comprised of these entries by \Cref{lem:bottom_i_rows}. We then know that $2$, $3$, and $9$ appear in the second row of $T$---because we know they appear in the bottom two rows but not in the bottom row---but we do not know whether $4$, $6$, and/or $8$ appear in the second row. Since $1$ and $5$ are nonexcedances of $\prom_1(T)$ that have incoming arrow in $\prom_2(T)$, we know that these entries appear only in the top row of $T$ by \Cref{cor:new_in_row}.
\end{example}

In the case that $\lambda$ is a rectangle, we consider stronger versions of these results in the next subsection.

\subsection{Rectangular increasing tableaux}\label{sec:rect_inc}
$K$-promotion is best understood in the rectangular case. In this case, moreover, we can say much more about the structure of the associated promotion digraphs. In particular, we propose in \Cref{conj:inc_tab_determined} that a rectangular increasing tableau is completely determined by its tuple of promotion digraphs. We use the following two lemmas.


\begin{lemma}
\label{lem:next_row_up}
    Suppose $T \in \inc^q(r\times c)$.
    Suppose that value $v$ appears in the union of the bottom $k$ rows of $T$. 
    Let $t$ be minimal such that there is an arrow $t\to v$ in $\prom_{r-k}$. 
    Consider the following three conditions: 
    \begin{enumerate}
        \item[$(1)$] $v$ also appears in row $r-k$;
        \item[$(2)$] $\prom_{r-k-1}(T)$ has an arrow $t' \to v$ with $t' < t$;  
        \item[$(3)$] $\prom_{r-k}(T)$ has an arrow $t \rightarrow w$ with $v < w$.
    \end{enumerate}
    Then 
    \begin{itemize}
        \item condition $(1)$ implies that either condition $(2)$ or $(3)$ holds, \item condition $(2)$ implies condition $(1)$, and 
    \item condition $(3)$ implies condition $(1)$ if $k = r-1$.
    \end{itemize}
\end{lemma}
\begin{proof}
    Since $v$ is in the union of the bottom $k$ rows, $v$ is an excedance of $\prom_{r-k}(T)$ by \Cref{lem:bottom_i_rows}. Fix $t$ minimal such that there is an arrow $t \to v$ in $\prom_{r-k}(T)$. Note that $t < v$.

    Now suppose (1) holds, so that the value $v$ also appears in row $r-k$. Then one or both of the following two things happens. If this $v$ moves up to row $r-k-1$ before iteration $t$ of gromotion, then $\prom_{r-k-1}(T)$ has an arrow $t' \to v$ with $t' < t$ and so (2) holds. 
    Otherwise, the $v$ does not move up earlier and so iteration $t$ of gromotion involves the $v$ from the bottom $k$ rows merging with the $t$ in row $r-k$ so that the flow path of this iteration continues to the right in row $r-k$ and another value $w > v$ must also enter row $r-k$ during this gromotion.
    Thus $t \rightarrow w$ in $\prom_{r-k}(T)$ with $v < w$, and so (3) holds.

    Suppose instead that (2) holds. By definition of $t$, before iteration $t'$ of gromotion, no $v$ has moved into row $r-k$. But (2) implies that $v$ moves from row $r-k$ to row $r-k-1$ at iteration $t'$. Hence, $v$ appeared in row $r-k$ in the original tableau $T$.

    Finally, suppose that (3) holds with $k = r-1$ and fix $w$ satisfying the hypothesis. We wish to show that $T$ has $v$ also appearing in row $1$. By (3), iteration $t$ of gromotion transports both $v$ and $w$ from row $2$ to row $1$. Say that $v$ moves up in column $c$ while $w$ moves up in column $d > c$. Considering the flow path of this iteration, we see that it must contain the segment of the first row between columns $c$ and $d$. Since the flow path also contains the edge from $(2,c)$ to $(1,c)$, this implies that there must have been a merge into the cell $(1,c)$. Thus, $v$ was also in position $(1,c+1)$ before this iteration. By choice of $t$, $v$ must therefore have also been in row $1$ in the initial tableau $T$. 
\end{proof}

\begin{remark}
    One might hope that \Cref{lem:next_row_up} could instead say that $(3)$ implies $(1)$ for general $k$. The following example shows this does not hold. Let 
    \[
    T = \ytableaushort{12,24,35}
    \]
    and consider the value $v=3$.
    Then $\prom_{3-1}(T)$ has an arrow $1 \to 3$.  However, $\prom_2(T)$ also has an arrow $1 \to 5$. 
\end{remark}


\begin{lemma}\label{lem:inc_is_it_packed}
    Suppose $T \in \inc^q(r \times c)$ and $1 \leq i < r$. Then $T$ is packed if and only if every vertex in $\prom_i(T)$ has outdegree at least $1$.  Indeed, a vertex $v$ in $\prom_i(T)$ has outdegree $0$ exactly if the value $v$ does not appear in $T$.
    \end{lemma}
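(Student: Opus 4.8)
The plan is to prove the sharper second sentence, from which the stated equivalence is immediate: $T$ is packed exactly when every value of $[q]$ appears in $T$, so if outdegree $0$ characterizes the missing values, then ``packed'' is equivalent to ``no vertex has outdegree $0$.'' I will therefore show that, for a fixed $i$ with $1 \le i < r$, the vertex $v$ has outdegree $0$ in $\prom_i(T)$ if and only if $v$ does not appear in $T$. By definition, the outdegree of $v$ in $\prom_i(T)$ counts the values that move from row $i+1$ to row $i$ during the $v$th gromotion step, that is, during the application of gromotion to $\gromotion^{v-1}(T)$.

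First I would track the alphabet: since each gromotion applies $\grow$, the alphabet of $\gromotion^{v-1}(T)$ is $(v < v+1 < \dots < q < 1 < \dots < v-1)$, whose minimum is $v$. Thus the $v$th gromotion step begins by attempting to delete $v$ from the upper-left corner. I next record that the set of integers appearing is invariant under gromotion: in an iteration of step (2), a value $a_i$ that occurs either retains an occurrence away from the $\bullet$-region, or, if all its boxes lie in $\mathfrak{C}$, is reinstated in an adjacent box of $\mathfrak{B}$; and step $(3^*)$ only reinstates the deleted minimum. Since the minima deleted in the first $v-1$ steps are $1, \dots, v-1$, the value $v$ is never deleted before step $v$ and is never created, so $v$ appears in $\gromotion^{v-1}(T)$ if and only if $v$ appears in $T$.

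If $v$ does not appear, then the upper-left corner of $\gromotion^{v-1}(T)$ holds a value larger than $v$, step (1) deletes nothing, no $\bullet$ is introduced, the flow path is empty, and $v$ has outdegree $0$ in every $\prom_i(T)$. If instead $v$ appears, then it sits in the upper-left corner, is deleted, and the flow path is nonempty. Here is where rectangularity enters: by \Cref{def:prom_path_inc}, every box of the flow path that has a right or a down neighbor contributes such a neighbor to the flow path, so following successors from the top-left box yields a chain that can only terminate at a box with neither a right nor a down neighbor; in an $r \times c$ rectangle that box is the bottom-right corner, which therefore lies in the flow path. Choosing a saturated chain in the flow path from the top-left box (row $1$) to the bottom-right corner (row $r$), the row index increases from $1$ to $r$ in steps of $0$ or $1$, so for each $i \in \{1, \dots, r-1\}$ the chain contains a down-cover $\bbb < \bbb^\downarrow$ with $\bbb$ in row $i$. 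By the proposition characterizing the flow path, such a down-cover means the entry in $\bbb^\downarrow$ moves up from row $i+1$ to row $i$; hence at least one value moves from row $i+1$ to row $i$, giving $v$ outdegree $\ge 1$ in $\prom_i(T)$.

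The main obstacle is the claim that the flow path reaches the bottom row, which must be argued for the genuinely $K$-theoretic flow (which may branch) rather than for the single jeu-de-taquin chain of the standard case; the argument above handles this by reasoning with the flow path as a poset and pinning down its unique terminal box via rectangularity (in a nonrectangular shape the flow could terminate at an inner corner of the complement in a higher row, which is exactly why the lemma is restricted to rectangles). One should also take care that the invariance of the set of appearing values is used for gromotion, where the integer labels are fixed, rather than for $\promotion$, where they are shifted.
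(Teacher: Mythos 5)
Your proof is correct and follows essentially the same route as the paper, whose own argument is just two sentences asserting that in a rectangle at least one label must cross each row boundary during every gromotion step. You supply the details the paper leaves implicit (alphabet tracking, invariance of the set of appearing values, and the fact that the flow path must terminate at the bottom-right corner of a rectangle and hence contains a down-cover at every row), which in particular cleanly establishes the stronger ``indeed'' sentence even for non-packed $T$.
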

\begin{proof}
    If the value $v$ does not appear in $T$, then in the $v$th iteration of gromotion, no labels will move and so vertex $v$ will have outdegree $0$ in each $\prom_i(T)$. On the other hand, if $v$ appears in $T$, then the $v$th application of gromotion involves a nonempty flow path; since $T$ is rectangular, this path ends at the lower right corner $(r,c)$ and crosses every boundary between rows, so $v$ has nonzero outdegree in every $\prom_i(T)$.
\end{proof}

We also have a sort of dual statement to \Cref{lem:bottom_i_rows} in the rectangular case.

\begin{lemma}\label{lem:top_i_rows}
    Suppose $T \in \inc^q(r \times c)$ and $1 \leq i < r$. Then $\ell$ appears in the union of the top $i$ rows of $\promotion^q(T)$ if and only if $\ell$ is an antiexcedance of $\prom_{i}(T)$.
\end{lemma}
\begin{proof}
    The value $\ell$ can only leave the union of the top $i$ rows when it is located in the cell $(1,1)$. This only happens immediately before iteration $\ell$ of gromotion. After this iteration, $\ell$ appears only in the cell $(r,c)$. 
    
    If $\ell$ later enters the union of the top $i$ rows at iteration $m > \ell$ of gromotion, and in particular if $\ell$ appears in the union of the top $i$ rows of $\promotion^q(T)$, then $\prom_{i}(T)$ has an arrow $m \to \ell$ and $\ell$ is an antiexcedance of $\prom_{i}(T)$. Conversely, if $\ell$ is an antiexcedance of $\prom_{i}(T)$, then $\prom_{i}(T)$ has an arrow $m \to \ell$ for some $m > \ell$, and so $\ell$ enters the union of the top $i$ rows during iteration $m$ of gromotion; as previously described, $\ell$ cannot then leave this region until iteration $q + \ell$, so $\ell$ appears in the union of the top $i$ rows of $\promotion^q(T)$.
\end{proof}

Following \cite{Pechenik:frames}, define the \newword{frame} of a rectangular tableau to be the union of the first row, last row, first column, and last column of the tableau.

\begin{lemma}
\label{lem:frame}
    Suppose $T \in \inc^q(r\times c)$ and fix a value $v$. Then the promotion digraphs determine exactly which boxes of the frame of $T$ are occupied by $v$.   
\end{lemma}
\begin{proof}
    We can determine the bottom row of $T$ by \Cref{cor:recover_bottom}.
    We can determine the leftmost column because, by increasingness, the entry in position $(i,1)$ is the least element of the union of the bottom $r+1-i$ rows; we know this union by \Cref{lem:bottom_i_rows}.
    
    We can determine if entry $v$ is in row $1$ and also in a lower row, by using Condition (3) from \Cref{lem:next_row_up}. Entry $v$ is only in the top row of $T$ exactly when it appears somewhere in the tableau as determined by \Cref{lem:inc_is_it_packed}, but does not appear in the union of the bottom $r-1$ rows as determined by \Cref{lem:bottom_i_rows}. 

    To determine the rightmost column of $T$, first note that, by increasingness, the entry in position $(i,c)$ of $\promotion^q(T)$ is the greatest element of the union of the top $i$ rows of $\promotion^q(T)$, which is determined by \Cref{lem:top_i_rows}. By \cite[Theorem~2]{Pechenik:frames}, the rightmost column of $\promotion^q(T)$ coincides with the rightmost column of $T$.
\end{proof}

\begin{corollary}
    \Cref{conj:inc_tab_determined} holds for $\inc^q(2 \times c)$ and $\inc^q(r \times 2)$. 
\end{corollary}
\begin{proof}
    Immediate from \Cref{lem:frame}.
\end{proof}

\begin{example}
   Consider the tableau $U$ and its promotion digraphs from \Cref{ex:inc_promotion_digraph_rectangle}. Since $9$ is an excedance in $\prom_2(U)$, we know that $9$ is in the bottom row of $U$ by \Cref{cor:recover_bottom}. To determine if it also appears in the second row of $U$ using \Cref{lem:next_row_up}, we first notice that $1$ is the minimal value that has an arrow pointing to $9$ in $\prom_2(U)$. We see that $1$ also has an arrow pointing to $10$ in $\prom_2(U)$, and hence we conclude that $9$ appears in the second row of $U$ using the second bullet point of the Lemma.
\end{example}

Unlike the case for general increasing tableaux (see \Cref{ex:cant_get_shape,ex:cant_get_top_row}), we believe that a rectangular increasing tableau is completely determined by its tuple of promotion digraphs.
\begin{conjecture}\label{conj:inc_tab_determined}
    Suppose that $T,T'$ are rectangular increasing tableaux with the same tuples of promotion digraphs. Then $T = T'$. 
\end{conjecture}

We have verified this conjecture for computationally for hundreds of thousands of tableaux.
\Cref{conj:inc_tab_determined} would follow from an algorithm to reconstruct an increasing tableau $T$ from its tuple of promotion digraphs. Although we do not know such an algorithm in general, in practice, the partial results established so far are highly effective for reconstruction; see below for such an example. 

\begin{example}
A rectangular tableau $S$ has the promotion digraphs below. We  reconstruct $S$.
    \[\raisebox{.8in}{$\prom_1(S)=$}\begin{tikzpicture}
        \node (1) at (0,0) {$1$};
        \node (5) at (1,0) {$5$};
        \node (7) at (2,0) {$7$};
          \node (2) at (0,-1) {$2$};
        \node (4) at (1,-1) {$4$};
        \node (6) at (2,-1) {$6$};
        \node (3) at (-1,-2) {$3$};
        \draw[<-, thick] (1) to (5);
        \draw[<-, thick] (2) to (4);
        \draw[<-, thick] (3) to (2);
        \draw[<-, thick] (4) to (1);
        \draw[<-, thick] (4) to (3);
        \draw[<-, thick] (5) to (4);
        \draw[<-, thick] (5) to (7);
        \draw[<-, thick] (5.north) to [out=120,in=120, looseness=1.5] (3.north);
        \draw[<-, thick] (6.south) to [out=220,in=330, looseness=1] (3.south);
        \draw[<-, thick] (6) to (4);
        \draw[<-, thick] (6) to (5);
        \draw[<-, thick] (7) to (6);
    \end{tikzpicture}\hspace{.5in}
    \raisebox{0.8in}{$\prom_2(S)=$}\begin{tikzpicture}
        \node (1) at (0,0) {$1$};
        \node (5) at (1,0) {$5$};
        \node (7) at (2,0) {$7$};
          \node (2) at (0,-1) {$2$};
        \node (4) at (1,-1) {$4$};
        \node (6) at (2,-1) {$6$};
        \node (3) at (-1,-2) {$3$};
        \draw[->, thick] (1) to (5);
        \draw[->, thick] (2) to (4);
        \draw[->, thick] (3) to (2);
        \draw[->, thick] (4) to (1);
        \draw[->, thick] (4) to (3);
        \draw[->, thick] (5) to (4);
        \draw[->, thick] (5) to (7);
        \draw[->, thick] (5.north) to [out=120,in=120, looseness=1.5] (3.north);
        \draw[->, thick] (6.south) to [out=220,in=330, looseness=1] (3.south);
        \draw[->, thick] (6) to (4);
        \draw[->, thick] (6) to (5);
        \draw[->, thick] (7) to (6);
    \end{tikzpicture}\]
First, notice that each vertex of each promotion digraph has positive outdegree. This tells us that $S$ is packed with entries $1,2,\ldots,7$ by \Cref{lem:inc_is_it_packed}. Since $\prom_1(S)$ and $\prom_2(S)$ are the only nontrivial promotion digraphs, we know that $S$ has $3$ rows.
    
Next, we see that the excedances of $\prom_2(S)$ are $4$, $5$, and $7$. \Cref{cor:recover_bottom} tells us that these entries make up the bottom row of $S$. The excedances in $\prom_1(S)$ are $3$, $4$, $5$, $6$, and $7$. It follows that $3$ and $6$ must appear in the second row of $S$ by \Cref{lem:bottom_i_rows}. Using the first bullet point in \Cref{lem:next_row_up}, we can deduce that $4$ must also appear in the second row of $S$. Indeed, $4$ is an excedance in $\prom_2(S)$ with $2$ being the minimal vertex pointing to $4$, and there is an arrow from $1$ to $4$ in $\prom_1(S)$.

Since $1$ and $2$ are nonexcedances in $\prom_1(S)$, they appear only in the top row of $S$ by \Cref{cor:new_in_row}. We see that $5$ also appears in the top row using the second bullet point of \Cref{lem:next_row_up}: $3$ is the smallest entry with an arrow to $5$ in $\prom_1(S)$, and $3$ also has an arrow to $6$ in $\prom_1(S)$. We conclude that 
    \[S=\ytableaushort{125,346,457}. \qedhere \]
\end{example}

\begin{definition}\label{def:complete_digraph}
    A \newword{complete digraph} is a directed graph such that there is an arrow $u \to v$ for every ordered pair $(u,v)$ of distinct vertices.
\end{definition}

\begin{example}\label{ex:rect_nonrect_same_digraph}
The following rectangular and nonrectangular tableaux have the complete digraph on vertices $1, 2$ and $3$ as their $\prom_1$. 
\[\ytableaushort{12,23}\hspace{,5in} \ytableaushort{123,23} \qedhere\]
This shows that \Cref{conj:inc_tab_determined} cannot be strengthened to say that a rectangular tableau $T$ is distinguished from \emph{all} other tableaux by its promotion digraphs; we only conjecture that $T$ is distinguished from all other \emph{rectangular} tableaux.
\end{example}

\subsection{Vertex degrees in promotion digraphs of rectangular increasing tableaux}
\label{sec:vertex_degrees_inc}
We now prove a collection of results about the degrees of vertices in promotion digraphs of rectangular increasing tableaux. These results are analogous to the much easier \Cref{prop:no_loops} in the standard case.

\begin{lemma}\label{lem:same_row}
    Suppose $T \in \inc^q(r \times c)$ and $1 \leq k' \leq k \leq r$. If value $v$ appears in row $k$ of $T$ (perhaps among other rows) and appears in row $k'$ of $\promotion^q(T)$ (perhaps among other rows), then the indegree of vertex $v$ in $\prom_i(T)$ is at least $1$ for every $i$. 
\end{lemma}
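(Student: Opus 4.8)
The plan is to split the window of $q$ gromotions that defines the promotion digraphs into the steps \emph{before} the letter $v$ is deleted and the steps \emph{after} it is reinserted, and to handle the two halves of the index range separately. Since $\prom_i(T)$ has no edges once $i\ge r$, the phrase ``every $i$'' means $1\le i\le r-1$; I would prove indegree at least $1$ for $i\le k-1$ using where $v$ sits in $T$, and for $i\ge k'$ using where $v$ sits in $\promotion^q(T)$. Because $k'\le k$, the ranges $\{1,\dots,k-1\}$ and $\{k',\dots,r-1\}$ together exhaust $\{1,\dots,r-1\}$, so this suffices.

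For the range $i\le k-1$ I would simply invoke \Cref{lem:bottom_i_rows}. As $v$ appears in row $k$ of $T$, it appears in the union of the bottom $r-k+1$ rows, hence in the union of the bottom $\rho$ rows for every $\rho\ge r-k+1$. By \Cref{lem:bottom_i_rows} this makes $v$ an excedance of $\prom_{r-\rho}(T)$ for each such $\rho$, that is, an excedance of $\prom_j(T)$ for every $1\le j\le k-1$. An excedance (\Cref{def:excedance}) has an incoming edge, so the indegree of $v$ in $\prom_i(T)$ is at least $1$ for all $i\le k-1$.

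For the range $i\ge k'$ I would track the single letter $v$ across the window. First I would argue that $v$ is deleted at gromotion step $v$: within one period this is the only step at which $v$ is the minimal letter of the current alphabet, and if $v$ were instead annihilated by $K$-theoretic merging before step $v$, then—since a vanished letter has no source and is reinserted only when it is deleted—it could never reappear, contradicting that $v$ lies in row $k'$ of $\promotion^q(T)=\gromotion^q(T)$. Being minimal, $v$ occupies only the corner of $\gromotion^{v-1}(T)$, so deleting and then reinserting it places a single $v$ at the bottom-right box $(r,c)$; here I use that in a rectangle the flow path can terminate only at $(r,c)$, its unique maximal element, so the reinserted letter lands in row $r$ alone. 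During steps $v+1,\dots,q$ the letter $v$ is neither deleted nor reinserted, so every copy of it can move only up or left; consequently the topmost row containing $v$ is non-increasing, drops by at most one per step, and each unit drop from row $i+1$ to row $i$ must be a genuine move of $v$ from row $i+1$ to row $i$ (the new top copy can only have arrived from below), hence an edge into $v$ in $\prom_i(T)$. Since this topmost row equals $r$ after step $v$ and is at most $k'$ after step $q$, it passes through every intermediate value, yielding indegree at least $1$ in $\prom_i(T)$ for all $k'\le i\le r-1$.

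The main obstacle is that $K$-promotion is not a particle system: copies of a value merge and split, so ``$v$ moves from row $i+1$ to row $i$'' cannot be read off from a single trajectory. The care goes into the monotonicity bookkeeping of the previous paragraph—checking against the definition of a move (a box $\bbb\in\mathfrak{B}$ of row $i$ filled with $v$ while $\bbb^\downarrow\in\mathfrak{C}$ becomes a hole) that a unit drop of the topmost occupied row is forced to be a true upward move rather than a horizontal slide or an annihilation (for the range $i\le k-1$ this subtlety is packaged inside \Cref{lem:bottom_i_rows})—and into confirming the two rectangle-specific inputs used above: that the reinserted letter occupies exactly $(r,c)$, and that $\gromotion^q=\promotion^q$, so that the far endpoint of the window is the hypothesized tableau.
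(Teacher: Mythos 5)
Your proof is correct and takes essentially the same route as the paper's: the paper's (much terser) argument likewise observes that $v$ appears only in the upper-left corner of $\gromotion^{v-1}(T)$, only in the lower-right corner of $\gromotion^{v}(T)$, and again in row $k'$ of $\gromotion^{q}(T)$, so that it passes through every row. Your splitting of the index range via \Cref{lem:bottom_i_rows} and the monotone bookkeeping of the topmost row containing $v$ just make explicit the details the paper leaves implicit.
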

\begin{proof}
        Suppose label $v$ exists in row $k$ of $T$. Then $v$ will appear only in the upper left-hand corner of $\gromotion^{v-1}(T)$, will appear only in the lower right-hand corner of $\gromotion^v(T)$, and will appear in the row $k'$ of $\gromotion^q(T)$.  Since $v$ passed through every row at least once, the lemma follows.
\end{proof}

\begin{corollary}\label{cor:indegree_smallorder}
    For any packed rectangular increasing tableau $T \in \packinc^q(r \times c)$, if $\promotion^q(T)=T$, then the indegree of each vertex in $\prom_i(T)$ is at least $1$. 
\end{corollary}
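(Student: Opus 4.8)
The plan is to deduce this directly from \Cref{lem:same_row}, using the two hypotheses only to guarantee that the premises of that lemma hold for every vertex. Fix an arbitrary vertex $v \in [q]$ and an arbitrary index $i$ with $1 \leq i < r$. Since $T \in \packinc^q(r\times c)$ is packed, the value $v$ genuinely appears in $T$; let $k$ be any row of $T$ containing an instance of $v$.

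Next I would invoke the hypothesis $\promotion^q(T) = T$. Because $\promotion^q(T)$ is literally equal to $T$ as a filling, the value $v$ occupies exactly the same boxes, and in particular exactly the same rows, in $\promotion^q(T)$ as in $T$. Thus $v$ appears in row $k$ of $\promotion^q(T)$ as well. I can therefore apply \Cref{lem:same_row} with $k' = k$: the constraint $1 \leq k' \leq k \leq r$ is satisfied trivially, and the lemma concludes that the indegree of vertex $v$ in $\prom_i(T)$ is at least $1$. Since $v$ and $i$ were arbitrary, every vertex of every nontrivial $\prom_i(T)$ has positive indegree.

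I do not expect a genuine obstacle here, as all of the combinatorial content is carried by \Cref{lem:same_row}; the corollary is essentially a packaging of that lemma. The two hypotheses play complementary bookkeeping roles: packedness ensures that every $v \in [q]$ occurs in $T$ (so that there is a row $k$ to choose), while $\promotion^q(T) = T$ supplies a reappearance of $v$ in a weakly higher row of $\promotion^q(T)$ (here, the \emph{same} row, giving $k'=k$). The only point deserving explicit mention is that equality of the two tableaux forces equality of the sets of rows in which $v$ occurs, which is precisely what licenses the choice $k'=k$.
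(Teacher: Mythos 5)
Your proof is correct and takes essentially the same route as the paper, which simply declares the corollary immediate from \Cref{lem:same_row}; you have merely spelled out the bookkeeping (packedness yields a row $k$ containing $v$, and $\promotion^q(T)=T$ licenses the choice $k'=k$).
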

\begin{proof}
  This is immediate from the previous \Cref{lem:same_row}. 
\end{proof}

\begin{corollary}\label{cor:frame}
    For any packed rectangular increasing tableau $T \in \packinc^q(r \times c)$, the indegree of each vertex in each of $\prom_1(T)$ and $\prom_{r-1}(T)$ is at least $1$. Also, let $v$ be any value appearing in the frame of $T$. Then the indegree of $v$ in each $\prom_i$ is at least $1$.
\end{corollary}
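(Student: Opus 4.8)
The plan is to follow each value $v \in [q]$ through the iterated $K$-gromotion orbit and to extract the required boundary crossings from where $v$ sits in the top and bottom rows. Three structural facts drive the argument. First, because $T$ is packed, in $\gromotion^{v-1}(T)$ the value $v$ is the minimum of the current alphabet $\grow^{v-1}(\cA)$, so it sits alone in the upper-left corner and is precisely the entry deleted at the $v$th gromotion. Second, $v$ is then reinserted as the new maximum, so in $\gromotion^{v}(T)$ it occupies the lower-right corner $(r,c)$; equivalently, the flow path of the $v$th gromotion runs from $(1,1)$ down to row $r$. Third, during any gromotion that does not delete $v$, every copy of $v$ moves only weakly north and weakly west, since the hole $\bullet$ propagates toward the lower-right and can pull entries only upward or leftward.

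For $\prom_1(T)$ and $\prom_{r-1}(T)$ I would use that, between its reinsertion at $(r,c)$ and its next arrival at $(1,1)$, the value $v$ climbs monotonically from row $r$ to row $1$, and therefore crosses every row boundary $1, \dots, r-1$ necessarily by an upward move from row $i+1$ into row $i$; each such move is recorded as an arrow into $v$ in $\prom_i$. The two extreme boundaries are the point: the crossing of boundary $r-1$ happens at the very start of the climb (just after reinsertion at $(r,c)$), while the crossing of boundary $1$ happens at the very end (the final approach to $(1,1)$). Reading these off within the recording window $\{1, \dots, q\}$ — using the tail of the previous climb for the near-top boundary and the start of the current climb for the near-bottom boundary — shows that every vertex has indegree at least $1$ in both $\prom_1(T)$ and $\prom_{r-1}(T)$.

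For a frame value I would invoke \Cref{lem:same_row}, which promotes a single comparable pair of rows into positive indegree in every $\prom_i$: if $v$ occurs in row $k$ of $T$ and in some row $k' \le k$ of $\promotion^q(T)$, then $v$ has indegree at least $1$ in all $\prom_i$. For $v$ in the last row this is immediate with $k = r$, since $\promotion^q(T)$ is packed and every row index is at most $r$. For $v$ in the first row, first column, or last column I would locate an occurrence of $v$ in $\promotion^q(T)$ weakly above its lowest occurrence in $T$, using that $K$-promotion sends the frame to the frame (the four sides are permuted cyclically), so a frame value of $T$ remains a frame value of $\promotion^q(T)$ and the inequality $k' \le k$ can be arranged.

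The main obstacle is reconciling the recording window with the period of the dynamics, together with the fact that a value of an increasing tableau may occupy several rows at once. The digraph $\prom_i(T)$ registers only the gromotion steps $1, \dots, q$, whereas the monotone climb of the reinserted copy of $v$ from $(r,c)$ back to $(1,1)$ spans steps $v, v+1, \dots, v+q-1$, so for $v$ near $q$ the crossing of a prescribed boundary can fall outside $\{1,\dots,q\}$; the remedy is that the promotion digraphs of the orbit members are related by a cyclic relabeling (the increasing-tableau analogue of \Cref{lem:prom.P}), which makes ``every vertex has indegree at least $1$ in $\prom_i$'' an orbit-invariant statement and lets one record each crossing in whichever orbit member places it in the window. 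I expect that carefully establishing the frame-to-frame behavior under $\promotion^q$ and this window reduction — rather than the one-line applications of \Cref{lem:same_row} — is where the genuine work lies; the corner facts and the weakly-northwest monotonicity are the tools I would carry it out with.
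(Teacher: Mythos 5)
Your treatment of the frame values is essentially the paper's: \Cref{lem:same_row} together with the frame theorem of \cite{Pechenik:frames}. One correction there: that theorem states that $T$ and $\promotion^q(T)$ have \emph{identical} frames, not that the four sides are permuted cyclically (a single $K$-promotion does not preserve the frame at all). Under your cyclic-permutation reading, ``the inequality $k' \le k$ can be arranged'' could fail, e.g.\ for a value occurring only in the top row; with the correct statement one simply takes $k' = k$ and \Cref{lem:same_row} applies directly.

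The genuine gap is in the $\prom_1(T)$ and $\prom_{r-1}(T)$ claim for a value $v$ outside the frame. You correctly observe that the climb of $v$ from $(r,c)$ back to $(1,1)$ occupies gromotion steps $v+1,\dots,v+q-1$, so the boundary-$(r-1)$ crossing may fall outside the recording window $[q]$; but your proposed remedy --- making ``every vertex has positive indegree'' orbit-invariant via a cyclic relabeling --- does not work. The increasing-tableau analogue of \Cref{lem:prom.P} only relates arrows with both endpoints in $\{1,\dots,q-1\}$; the wrap-around needed for orbit-invariance is exactly what fails when $\promotion^q(T) \neq T$, and if such an argument worked uniformly in $i$ it would contradict \Cref{ex:indegree0}, where $\prom_2(T)$ has a vertex of indegree $0$ (and promotion order $675 \neq q$). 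Moreover, the boundary-$(r-1)$ crossing need not occur ``at the very start of the climb'': a value can linger in row $r$ for many steps. The fix, which is what the paper does, is a second application of the frame theorem rather than any orbit argument: since $v$ is not in the frame of $T$, it is not in row $1$ of $T$ yet occupies $(1,1)$ in $\gromotion^{v-1}(T)$, so it crosses from row $2$ to row $1$ at some step in $[1,v-1]$; and it occupies $(r,c)$ in $\gromotion^{v}(T)$ yet is not in row $r$ of $\gromotion^{q}(T)$ (whose frame equals that of $T$), so it crosses from row $r$ to row $r-1$ at some step in $[v+1,q]$. Both crossings land inside the window, and no relabeling between orbit elements is needed.
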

\begin{proof}
For $v$ in the frame, this is immediate from \Cref{lem:same_row} combined with \cite[Theorem~2]{Pechenik:frames}, which states that $T$ and $\promotion^q(T)$ have the same frame. If $v$ appears only outside the frame, then it must pass through the first and last rows to return to being only outside the frame; this proves the statement about $\prom_1(T)$ and $\prom_{r-1}(T)$.
\end{proof}

\begin{corollary}\label{cor:row_r-1}
    Consider $T\in \packinc^q(r \times c)$, and let $v$ be any value appearing in row $r-1$ of $T$. The indegree of $v$ in any $\prom_i(T)$ is at least $1$.
\end{corollary}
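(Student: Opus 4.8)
The plan is to split on the value of $i$, using \Cref{cor:frame} to dispense with the extremal layer $i=r-1$ and an ascent-tracking argument, in the spirit of the proof of \Cref{lem:same_row}, for the remaining layers $1\le i\le r-2$. Throughout, I fix a value $v$ appearing in row $r-1$ of $T$ and recall that $\prom_i(T)$ has edges only for $1\le i\le r-1$, so these are the only cases to treat.

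First I would handle the layer $i=r-1$. Since $T$ is a packed rectangular increasing tableau, \Cref{cor:frame} already asserts that \emph{every} vertex of $\prom_{r-1}(T)$ has indegree at least $1$; in particular $v$ does. This is precisely why the genuinely new content of the corollary lies in the intermediate layers: there \Cref{cor:frame} only applies to values lying in the frame, whereas an interior entry of row $r-1$ need not be in the frame.

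For $1\le i\le r-2$ I would track the value $v$ through the gromotion orbit exactly as in the proof of \Cref{lem:same_row}. The key observation is that after $v-1$ gromotions the cycled alphabet has minimum letter $v$, and in an increasing tableau the minimum letter can occupy only the top-left corner; hence $v$ appears solely in row $1$ of $\gromotion^{v-1}(T)$. Since $v$ occupied row $r-1$ in $T=\gromotion^{0}(T)$ and each gromotion step displaces any entry by at most one row, the value $v$ must ascend from row $r-1$ to row $1$ over the first $v-1$ gromotions, crossing each inter-row boundary upward at least once. Each upward crossing from row $j+1$ to row $j$ records an incoming arrow at vertex $v$ in $\prom_j(T)$, so $v$ has positive indegree in $\prom_j(T)$ for every $j\in\{1,\dots,r-2\}$. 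Together with the previous paragraph, this shows $v$ has indegree at least $1$ in every $\prom_i(T)$.

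I expect the only delicate point to be the claim that $v$ genuinely crosses each boundary upward, since $K$-gromotion can merge or split equal entries, so one cannot literally follow a single cell. As in \Cref{lem:same_row}, the robust formulation is about the event ``some label $v$ moves from row $j+1$ to row $j$'': because a copy of $v$ sits in row $r-1$ at time $0$ while the unique copy of $v$ has reached row $1$ by time $v-1$, such an event must occur for each intermediate $j$, and the $K$-theoretic merging can only realize such an upward move earlier rather than prevent it. This is the same level of bookkeeping already accepted in the proof of \Cref{lem:same_row}, so no genuinely new difficulty arises; the conceptual work is simply the recognition that the descent of $v$ to the corner supplies the layers $i\le r-2$ for free, while \Cref{cor:frame} supplies the last layer $i=r-1$.
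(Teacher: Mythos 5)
Your argument is correct and follows essentially the same route as the paper's: both reduce the boundary between rows $r-1$ and $r$ to the frame-invariance results (via \Cref{cor:frame}) and obtain the remaining boundaries from the forced ascent of $v$ from row $r-1$ to the top-left corner by time $v-1$. The only difference is organizational---you split on the layer $i$ while the paper splits on whether $v$ lies in the frame---and you make explicit the intermediate-layer bookkeeping that the paper leaves implicit.
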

\begin{proof} If $v$ also appears in the frame of $T$, we are done by \Cref{cor:frame}. Otherwise, we know $v$ appears only in the first row of $\gromotion^{v-1}(T)$, and only in the last row of $\gromotion^v(T)$. Since we know that the frame of $\promotion^q(T)$ is the same as the frame of $T$ by \cite[Theorem~2]{Pechenik:frames}, $v$ cannot be in row $r$ of $\gromotion^q(T)$. Thus the entry $v$ must reenter row $r-1$ at some point between the $v$th gromotion and the $q$th gromotion.
\end{proof}

\begin{example}\label{ex:indegree0}
Despite the previous sequence of results showing that vertices in promotion digraphs of packed rectangular increasing tableaux must have positive indegree in a large variety of cases, the indegree can in general be $0$.
For example, consider
\[T=
    \ytableaushort{135789{10}{13}{15}{18},2478{11}{14}{15}{17}{21}{22},68 {11} {16}{17}{20} {21}{22}{25}{26},8{10}{12}{17}{19}{22}{23}{24}{26}{27}} \in \packinc^{27}(4 \times 10).
    \]
    Here, the assiduous reader may check that $\prom_2(T)$ has indegree $0$ at vertex $14$. This means the $14$ does not make it back to row $2$ within $27$ iterations of gromotion. The promotion order of $T$ is $25 \cdot 27 = 675$. Examples of this behavior are challenging to locate. In particular, it is not feasible to search systematically, as for example $|\packinc^{27}(4 \times 10)| = 6,719,792,506,818,014,520$ by \cite[Theorem~2.4]{Pressey.Stokke.Visentin}.
\end{example}

The following shows that \Cref{ex:indegree0} is minimal with respect to its number of rows.

\begin{corollary}\label{cor:3row_indegree}
    Let $T\in \packinc^q(3 \times c)$. For each $i$, the indegree of each vertex $v$ is at least $1$.
\end{corollary}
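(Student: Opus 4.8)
The plan is to obtain this as an immediate consequence of \Cref{cor:frame}, after checking that a three-row shape simply leaves no room for an ``interior'' promotion digraph. First I would observe that $\prom_i(T)$ can only carry edges when $\lambda$ has a row $i+1$: by definition an edge in $\prom_i$ records a value moving from row $i+1$ to row $i$. Hence for $T \in \packinc^q(3\times c)$ the only indices $i$ for which $\prom_i(T)$ is possibly nontrivial are $i=1$ and $i=2$; for $i\geq 3$ the digraph is edgeless and there is nothing to prove. Since $r=3$, we have $r-1=2$, so the two-element set $\{1,\,r-1\}$ is exactly $\{1,2\}$, i.e.\ precisely the set of nontrivial indices.

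Next I would invoke \Cref{cor:frame} directly. That corollary asserts, for \emph{any} packed rectangular increasing tableau and with no hypothesis on the promotion order, that every vertex has indegree at least $1$ in both $\prom_1(T)$ and $\prom_{r-1}(T)$. Applying it with $r=3$ simultaneously covers $\prom_1(T)$ and $\prom_2(T)=\prom_{r-1}(T)$, which by the previous paragraph are all the nontrivial promotion digraphs. This gives the full statement.

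There is essentially no technical obstacle here; the content lies entirely in recognizing that three rows leave no interior index. It is worth flagging \emph{why} this is the boundary case, since it explains why \Cref{ex:indegree0} requires four rows. The only mechanism producing an indegree-$0$ vertex is a value $v$ that ascends to the first row and descends again but fails to re-enter some intermediate row $i$ with $2\leq i\leq r-2$ within $q$ gromotions, forcing indegree $0$ in $\prom_i(T)$; such intermediate indices exist only once $r\geq 4$. For the extreme indices $i\in\{1,r-1\}$ this failure cannot occur, because frame-preservation (\cite[Theorem~2]{Pechenik:frames}, as used in the proof of \Cref{cor:frame}) forces any $v$ outside the frame to cross from row $2$ to row $1$ on its way up and from row $r$ to row $r-1$ on its way back, while any $v$ inside the frame is handled by \Cref{lem:same_row}. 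When $r=3$ every nontrivial index is extreme, so the obstruction disappears.
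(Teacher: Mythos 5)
Your proof is correct, but it reaches the conclusion by a different (and slightly more economical) route than the paper. The paper's own proof splits on the \emph{vertex}: if $v$ lies in the frame of $T$, it invokes the second assertion of \Cref{cor:frame} (frame values have indegree at least $1$ in every $\prom_i$); if not, then $v$ lives only in row $2 = r-1$ and it invokes \Cref{cor:row_r-1}. You instead split on the \emph{index}: for $r=3$ the only indices carrying edges are $i=1$ and $i=2=r-1$, and the first assertion of \Cref{cor:frame} already guarantees indegree at least $1$ for \emph{every} vertex in $\prom_1(T)$ and $\prom_{r-1}(T)$, with no frame hypothesis. That single observation finishes the proof, so you never need \Cref{cor:row_r-1}. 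In effect the frame/non-frame case analysis that the paper redoes here is already absorbed into the proof of the first sentence of \Cref{cor:frame}, and you are exploiting that. Your closing discussion of why $r=3$ is the boundary case --- that an indegree-$0$ vertex requires an interior index $2 \le i \le r-2$, which first exists at $r=4$ --- is consistent with \Cref{ex:indegree0}, where the failure indeed occurs at $\prom_2$ of a four-row tableau. One small point of hygiene: when you say that for $i \ge 3$ the digraph is edgeless so ``there is nothing to prove,'' note that an edgeless digraph has all indegrees equal to $0$, so the statement is only sensible under the paper's implicit convention that $i$ ranges over $1 \le i \le r-1$; the same convention is needed for \Cref{cor:frame} and \Cref{cor:row_r-1} to be literally true, so this is a feature of the paper's phrasing rather than a gap in your argument.
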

\begin{proof}
    If $v$ appears in the frame of $T$, we are done by \Cref{cor:frame}. If not, then $v$ appears only in row $2$ and we are done by \Cref{cor:row_r-1}.
\end{proof}

\Cref{cor:3row_indegree} would also follow immediately from combining \Cref{cor:indegree_smallorder} and \Cref{conj:3row_order}; it may perhaps therefore be considered further weak evidence toward that conjecture.

\subsection{Summary of the preceeding results}\label{sec:summary}

Here, we summarize the various results obtained in \Cref{sec:inc_prom_digraph,sec:rect_inc,sec:vertex_degrees_inc} regarding the promotion digraphs of increasing tableaux. 

\begin{theorem}
\label{thm:inc_digraph_summary}
Let $T\in\inc^q(\lambda)$, where $\lambda$ has $r$ rows, and write
$D_i=\prom_i(T)$. We have the following.

\begin{enumerate}
\item Each digraph $D_i$ has no loops.

\item For $1\leq s<r$, the values appearing in the union of the bottom
$s$ rows of $T$ are exactly the excedances of $D_{r-s}$. In
particular, $D_{r-1}$ determines the bottom row of $T$.

\item If a value $v$ has nonzero indegree in some $D_i$, then $v$
appears in $T$. If, moreover, $v$ is a nonexcedance of $D_1$, then
$v$ appears only in the first row of $T$.
\end{enumerate}

Now assume that $\lambda=r\times c$ is rectangular. Then we additionally have the following.

\begin{enumerate}[resume]
\item For $1\leq i<r$, a vertex $v$ has outdegree $0$ in $D_i$
if and only if $v$ does not appear in $T$. Equivalently, $T$ is
packed if and only if every vertex of some (or equivalently every) $D_i$
has positive outdegree.

\item For $1\leq s<r$, the values appearing in the union of the top
$s$ rows of $\promotion^q(T)$ are exactly the antiexcedances of
$D_s$.

\item The tuple $(D_1,\ldots,D_{r-1})$ determines the frame of $T$.
Consequently, $(D_1,\ldots,D_{r-1})$ determines $T$ for
$\inc^q(2 \times c)$ and for $\inc^q(r \times 2)$. (Conjecturally, it determines $T$ for $\inc^q(r \times c)$.)

\item If $T$ is packed, then every vertex has nonzero indegree in
$D_1$ and $D_{r-1}$. Moreover, every value appearing in the frame of
$T$, and every value appearing in row $r-1$, has nonzero indegree in
every $D_i$. In particular, if $r = 2$ or $3$, every vertex of every $D_i$
has nonzero indegree.
\end{enumerate}
\end{theorem}
\begin{proof}
    This theorem collects together the statements of \Cref{lem:inc_no_loops,lem:bottom_i_rows,cor:new_in_row,lem:inc_is_it_packed,lem:top_i_rows,lem:frame,cor:frame,cor:row_r-1,cor:3row_indegree,conj:inc_tab_determined}.
\end{proof}

\section{Two-row and three-row increasing tableaux}\label{sec:small_r}
\subsection{Two-row increasing tableaux}\label{sec:2row}
The behavior of two-row rectangular increasing tableaux is much better understood than that of general rectangular increasing tableaux; see \cite{Pechenik:CSP,Bloom.Pechenik.Saracino,Kim.Rhoades,Kim:embedding,Patrias.Pechenik.Striker,Rhoades:skein}.
In particular, we have the following.

\begin{proposition}[{\cite{Pechenik:CSP}}]
    For $T \in \inc^q(2 \times c)$, we have $\promotion^q(T) = T$.
\end{proposition}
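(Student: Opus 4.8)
The plan is to exhibit a combinatorial model for $\inc^q(2 \times c)$ that carries $\promotion$ to a genuine rotation by one position of $q$ cyclically arranged points, generalizing the standard two-row case treated in \Cref{prop:2rowrectdigraphs}. Once such a rotation-equivariant encoding is in hand, the claim is immediate: rotating $q$ points $q$ times is the identity, so $\promotion^q(T) = T$.

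Concretely, I would first encode $T$ by its lattice word $\lattice(T) = w_1 \cdots w_q$ in the alphabet $2^{[2]}$, where each $w_j \subseteq \{1,2\}$ records which rows contain the entry $j$. Reading this word cyclically around a circle with vertices $1, \ldots, q$, I would build an arc diagram: a vertex $j$ with $1 \in w_j$ ``opens'' a top-row strand, a vertex with $2 \in w_j$ ``closes'' one, and the two are matched in the canonical noncrossing fashion (pairing each closing with the nearest available opening to its left, exactly as in \Cref{prop:2rowrectdigraphs}); vertices with $w_j = \{1,2\}$ simultaneously open and close, while vertices with $w_j = \emptyset$ are isolated. This produces a noncrossing arc structure on $[q]$, and the map is invertible because the lattice word is recovered from the diagram. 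The model should coincide with the noncrossing set partitions of \cite{Pechenik:CSP} alluded to before \Cref{thm:complete_graphs}.

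The crux is to show this encoding intertwines $\promotion$ with rotation. For this I would establish the two-row balance-point description of $K$-promotion (the analogue of \Cref{prop:promotion_via_balance_points}, to appear as \Cref{thm:Kpromotion_2row_via_balance_points}): that $\lattice(\promotion(T))$ is obtained from $w$ by adjusting the label at the first $1$-balance point, deleting $w_1$, and appending a new letter at the end. I would then check, exactly as in the final paragraph of the proof of \Cref{prop:2rowrectdigraphs}, that this operation on $w$ is precisely the effect of rotating the arc diagram counterclockwise by $2\pi/q$: the deleted first letter reappears at the end, and the balance-point relabeling reflects the single strand that wraps around. Iterating $q$ times returns every vertex and strand to its starting position.

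The main obstacle is the new combinatorial phenomena absent from the standard case, namely the vertices with $w_j = \{1,2\}$ and $w_j = \emptyset$ and, correspondingly, the teetering points of \Cref{def:Kbalance_point}. I expect the real work to lie in verifying that the balance-point and teetering-point bookkeeping behaves correctly under rotation when short arcs and isolated vertices are present---in particular, that a value appearing in both rows, or a skipped value, is transported by $\gromotion$ in a way that still matches a rigid rotation of the diagram. Establishing the precise two-row balance-point characterization with these features handled is the technical heart of the argument; granting it, the equivariance and hence $\promotion^q(T)=T$ follow as above.
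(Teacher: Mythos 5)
The paper offers no proof of this proposition --- it is quoted from \cite{Pechenik:CSP} --- but your plan is exactly the argument of that source, which the paper itself recalls as \Cref{lem:equivariant_bij}: encode $T$ as a noncrossing set partition of $[q]$ so that $K$-promotion becomes rotation, whence $\promotion^q = \mathrm{id}$. Your sketch correctly identifies the two deferred technical pieces (the two-row balance/teetering-point description of promotion and its compatibility with rotation), which are precisely \Cref{thm:Kpromotion_2row_via_balance_points} and \Cref{lem:teetering_and_balance_are_the_first_block} in the paper, so the approach is sound and essentially the same.
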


Our main result in this section is \Cref{thm:complete_graphs}, which completely characterizes the promotion digraphs of $2$-row rectangular tableaux. To give this characterization, we first must develop and recall some combinatorial ingredients.

We give below a first balance and teetering point characterization of promotion on $2$-row increasing tableaux (not necessarily rectangular). In this setting, we refer to $1$-balance and $1$-teetering points as simply balance and teetering points since there is no ambiguity.

\begin{theorem}\label{thm:Kpromotion_2row_via_balance_points}
   Let $T \in \inc^q(\lambda_1,\lambda_2)$ and let $w = w_1 \ldots w_q = \lattice(T)$. 
\begin{enumerate}
    \item If $w_1 = \varnothing$, then $\lattice(\promotion(T))$ is obtained from $w$ by deleting $w_1$ and appending $\varnothing$ to the end of $w$.
    \item If $w_1 \neq \varnothing$ and no balance point exists: 
    \begin{enumerate} 
        \item If $w$ has no teetering point, then $\lattice(\promotion(T))$ is obtained from $w$ by deleting the first letter of $w$, and appending a $1$ to the end.
        \item If $w$ has a teetering point, let $j_t$ be the first teetering point of $w$. Then $\lattice(\promotion(T))$ is obtained from $w$ by changing $w_{j_t}$ from $\{1,2\}$ to $1$, deleting the first letter of $w$, and appending a $\{1,2\}$ to the end. 
    \end{enumerate}
    \item If $w_1 \neq \varnothing$ and a balance point exists, let $j_b$ be the first balance point of $w$.
    \begin{enumerate}
        \item If there is no teetering point of $w$ before $j_b$, then $\lattice(\promotion(T))$ is obtained from $w$ by changing $w_{j_b}$ from $2$ to $1$, deleting the first letter of $w$, and appending a $2$ to the end
        \item If there is a teetering point of $w$ before $j_b$, let $j_t$ be the first teetering point of $w$; then $\lattice(\promotion(T))$ is obtained from $w$ by changing $w_{j_t}$ from $\{1,2\}$ to $1$, changing $w_{j_b}$ from $2$ to $\{1,2\}$, deleting the first letter of $w$, and appending a $2$ to the end.
    \end{enumerate}     
\end{enumerate} 
\end{theorem}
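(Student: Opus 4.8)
The plan is to track the single or paired $\bullet$-symbols through the $K$-jeu de taquin defining $\promotion$ and to translate their motion directly into the lattice word. Write $p_1(j)$ and $p_2(j)$ for the number of entries at most $j$ in rows $1$ and $2$ of $T$, and set $d(j) = p_1(j) - p_2(j)$; then $d(j)$ is the number of $1$s minus the number of $2$s among $w_1, \dots, w_j$, so $d(j) \geq 0$ by latticeness, a balance point is an index with $d(j)=0$, and a teetering point is an index with $w_j = \{1,2\}$ and $d(j)=1$ (cf.\ \Cref{def:Kbalance_point}). Since the entry $1$, if present, must lie in the upper-left corner, $w_1 \in \{\varnothing, \{1\}\}$, which is the top-level dichotomy of the statement. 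Case (1) is immediate: if $w_1 = \varnothing$ then step (1) of $K$-promotion introduces no $\bullet$, no jeu de taquin move ever fires, and $\promotion(T)$ is obtained by decrementing every entry, which deletes $w_1$ and appends $\varnothing$.

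For $w_1 = \{1\}$ I would first describe the frontier dynamics and isolate the observation that makes the bookkeeping collapse. After the corner $1$ is deleted, a single $\bullet$ sits at $(1,1)$ and migrates rightward in row $1$, occupying column $p_1(v)$ after value $v$ has been processed. A short local analysis at the frontier box $\bbb$, comparing the entries $\bbb^\rightarrow$ and $\bbb^\downarrow$, yields the dictionary: the front remains a single box in row $1$ while each processed value keeps $d$ positive and is not a teetering point; it \emph{drops} to row $2$ exactly at a balance point; and it \emph{splits} into two fronts, one per row, exactly at a teetering point. After a split the fronts occupy columns $p_1(v)$ in row $1$ and $p_2(v)$ in row $2$, so they are separated by a horizontal gap of $d(v)$. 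The crucial observation is that, away from these events, no entry changes rows—in each phase the moving front merely shifts entries within one row and leaves the other untouched—so the only changes to the lattice word, beyond the left-shift caused by the deleted corner and the appended terminal letter, happen at an event.

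It then remains to classify the events and the terminal position of $\bullet$. If neither event occurs (case (2)(a)) the front runs to the end of row $1$, terminating there, so we append $1$. If the first event is a drop at the first balance point $j_b$ with no earlier split (case (3)(a)), the value $j_b$ passes from row $2$ to row $1$, changing $w_{j_b}$ from $2$ to $1$, after which a single front advances to the end of row $2$, so we append $2$; this reproduces the standard two-row rule. A split at the first teetering point $j_t$ merges the two copies of $j_t$ into one row-$1$ copy, changing $w_{j_t}$ from $\{1,2\}$ to $1$, and launches two fronts with the row-$1$ front one column ahead (gap $d=1$). If $d$ never returns to $0$ (case (2)(b)) the fronts persist to the ends of both rows, so $\bullet$ terminates in both rows and we append $\{1,2\}$; if $d$ returns to $0$ at the first balance point $j_b > j_t$ (case (3)(b)), the row-$2$-only value $j_b$ becomes simultaneously adjacent to both fronts, is copied into both rows, changing $w_{j_b}$ from $2$ to $\{1,2\}$, and the fronts recombine into a single row-$2$ front that runs to the end, so we append $2$. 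In every case, combining the event changes with the left-shift and the appended letter gives exactly the stated transformation.

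The technical heart, and the main obstacle, is the two-front regime produced by a teetering split, namely cases (2)(b) and (3)(b). Here one must verify that the frontier invariant is preserved through every local move (advance, drop, split, and recombine), that a both-rows letter read at gap $2$ or more advances a front without splitting it—so that only the \emph{first} teetering point alters a letter while all later both-rows letters merely shift copies within their rows—and that recombination is forced precisely when the gap returns to $0$, i.e.\ at the first balance point following the split, and cannot occur otherwise. Once this frontier analysis is in place, the decrement-and-append bookkeeping in each of the four cases is routine.
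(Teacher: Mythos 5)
Your proof is correct and follows essentially the same route as the paper's: both arguments analyze the flow path of the $K$-jeu de taquin (your ``front'' of $\bullet$-boxes is exactly the paper's flow path viewed dynamically) and match its drop, split, and recombination events to the first balance and teetering points, case by case. The only difference is presentational: you make explicit the positional invariant (fronts at columns $p_1(v)$ and $p_2(v)$, with gap $d(v)$) that justifies the local analysis, where the paper states the corresponding flow-path facts descriptively.
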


\begin{proof}
Case (1) is immediate from the definition.

For Cases (2) and (3), we consider the flow path of $T$.

If $w$ has no balance point and no teetering point (Case (2a)), then the flow path consists of the first row of $(\lambda_1,\lambda_2)$. Hence the lattice word changes as described.

If $w$ has no balance point but has a teetering point (Case (2b)), let $j_t$ be the first teetering point. In this case, $j_t$  is the least entry in the second row that is pointed to from above. In particular, $j_t$ in the second row is not pointed to from the left. However, $j_t$ also appears in the first row. There may be other teetering points $j_{o_1}, \dots, j_{o_\ell}$ after $j_t$; these appear on the flow path as the values in the second row that are pointed to both from above and from the left. 

The values that move from row $2$ to row $1$ during gromotion are the values $j_t, j_{o_1}, \dots, j_{o_\ell}$. The value $j_t$ is in both rows before gromotion, but only in row $1$ after gromotion. The values $j_{o_1}, \dots, j_{o_\ell}$ are in both rows both before and after gromotion. Value $1$ ends up in the rightmost box of both rows. All other entries do not change rows. Replacing the $1$ at the beginning of $\mathcal{{L}}(T)$ with $\{1,2\}$ at the end reflects the cyclic shift of the alphabet in promotion.

Now suppose that we are in Case (3), so that $w$ has a balance point.
    Then the first balance point is the greatest entry $j_b$ in the second row that is pointed to from above. There is a teetering point before the first balance point if and only if $j_b$ in the second row is also pointed to from the left. In this case, the first teetering point $j_t$ before $j_b$ is the least entry in the second row that is pointed to from above. In particular, $j_t$ in the second row is not pointed to from the left. There may be other teetering points $j_{o_1}, \dots, j_{o_\ell}$ between $j_t$ and $j_b$; these appear on the flow path as the intermediate values in the second row that are pointed to both from above and from the left. 

The values that move from row $2$ to row $1$ during gromotion are the values $j_t, j_{o_1}, \dots, j_{o_\ell}, j_b$. The value $j_t$ is in both rows before gromotion, but only in row $1$ after gromotion. The values $j_{o_1}, \dots, j_{o_\ell}$ are in both rows both before and after gromotion. The value $j_b$ is only in row $2$ before gromotion, but is in both rows afterwards if $j_t$ exists and only in the top row if $j_t$ does not exist. Value $1$ ends up in the rightmost box of the second row. All other entries do not change rows. Replacing the $1$ at the beginning of $\mathcal{{L}}(T)$ with $2$ at the end reflects the cyclic shift of the alphabet in promotion.
\end{proof}

\begin{example}
We illustrate an example of Case (2b) of \Cref{thm:Kpromotion_2row_via_balance_points}. Tableau $T$ and its promotion are shown below, along with the flow path for $T$.
\[T=\ytableaushort{123467,235}\hspace{.5in}\promotion(T)=\ytableaushort{123567,247}\]
\[\begin{tikzpicture}
    \node (1) at (0,0) {$\mathbf{1}$};
    \node (2) at (1,0) {$\mathbf{2}$};
    \node (3) at (2,0) {$\mathbf{3}$};
    \node (4) at (3,0) {$\mathbf{4}$};
    \node (6) at (4,0) {$\mathbf{6}$};
    \node (7) at (5,0) {$\mathbf{7}$};
    \node (2second) at (0,-1) {$\mathbf{2}$};
    \node (3second) at (1,-1) {$\mathbf{3}$};
    \node (5second) at (2,-1) {$\mathbf{5}$};
    \draw[noflow] (3) to (5second);
    \draw[flow] (1) to (2);
    \draw[flow] (2) to (3);
    \draw[flow] (3) to (4);
    \draw[flow] (4) to (6);
    \draw[flow] (6) to (7);
    \draw[flow] (1) to (2second);
    \draw[flow] (2) to (3second);
    \draw[flow] (2second) to (3second);
    \draw[flow] (3second) to (5second);
\end{tikzpicture}\]
We compute $\mathcal{L}(T)$ and note that it has two teetering points (the first colored red below and the second colored green) but no balance points. 
\begin{align*}\mathcal{L}(T) &= 1 \ \textcolor{red}{\raisebox{-1ex}{$\overset{\displaystyle \mathbf{1}}{\underline{\mathbf{2}}}$}} \ \textcolor{dark}{\raisebox{-1ex}{$\overset{\displaystyle \mathbf{1}}{\underline{\mathbf{2}}}$}} \ 1 \ 2 \ 1 \ 1   \\
\mathcal{L}(\promotion(T)) &= \hspace{2ex} 1 \ \raisebox{-1ex}{$\overset{\displaystyle 1}{2}$} \ 1 \ 2 \ 1 \ 1 \ \raisebox{-1ex}{$\overset{\displaystyle 1}{2}$} \qedhere \end{align*}
\end{example}

\begin{remark}
Note that if $T$ is a two-row rectangular tableau, then only cases (1) and (3) of \Cref{thm:Kpromotion_2row_via_balance_points} can occur, since $T$ must have at least one balance point.
\end{remark}

\begin{example}\label{ex:inc_rect_lattice_word} 
Consider the tableau $U$ and its promotion.
    \[U=\ytableaushort{1235689{12},34578{10}{11}{13}}\hspace{.5in}\promotion(U)=\ytableaushort{124578{10}{11},23679{10}{12}{13}}\]
The flow path of $U$ is below:
    \[\begin{tikzpicture}
        \node (1) at (0,0) {$\mathbf{1}$};
        \node (2) at (1,0) {$\mathbf{2}$};
        \node (3) at (2,0) {$\mathbf{3}$};
        \node (5) at (3,0) {$\mathbf{5}$};
        \node (6) at (4,0) {$\mathbf{6}$};
        \node (8) at (5,0) {$\mathbf{8}$};
        \node (9) at (6,0) {$\mathbf{9}$};
        \node (12) at (7,0)  {$\textcolor{gray}{12}$};
        \node (13) at (7,-1)  {$\mathbf{13}$};
        \node (3second) at (0,-1) {$\textcolor{gray}{3}$};
        \node (4second) at (1,-1) {$\textcolor{gray}{4}$};
        \node (5second) at (2,-1) {$\mathbf{5}$};
        \node (7second) at (3,-1) {$\mathbf{7}$};
        \node (8second) at (4,-1) {$\mathbf{8}$};
        \node (10second) at (5,-1) {$\mathbf{10}$};
        \node (11second) at (6,-1) {$\mathbf{11}$};
        \node (12) at (7,0) {\textcolor{gray}{$12$}};
        \draw[noflow] (1) to (3second) to (4second) to (2) to (4second) to (5second);
        \draw[noflow] (5) to (7second);
        \draw[flow] (1) to (2);
        \draw[flow] (2) to (3);
        \draw[flow] (3) to (5);
        \draw[flow] (5) to (6);
        \draw[flow] (6) to (8);
        \draw[flow] (8) to (9);
        \draw[flow] (9) to (11second);
        \draw[flow] (3) to (5second);
        \draw[flow] (5second) to (7second);
        \draw[flow] (7second) to (8second);
        \draw[flow] (6) to (8second);
        \draw[flow] (8second) to (10second);
        \draw[flow] (10second) to (11second);
        \draw[flow] (11second) to (13);
        \draw[noflow] (8) to (10second);
        \draw[noflow] (9) to (12) to (13);
        \end{tikzpicture}\]
We give below the lattice words of $U$ and $\promotion(U)$. The first teetering point of $U$ is shown in red, the first balance point in blue, and the teetering point appearing between these in green. Since we have a teetering point before the first balance point, we are in Case (3b) of \Cref{thm:Kpromotion_2row_via_balance_points}.
\begin{align*}\mathcal{L}(U) &=1 \ 1 \ \raisebox{-1ex}{$\overset{\displaystyle 1}{2}$} \ 2 \ \textcolor{red}{\raisebox{-1ex}{$\overset{\displaystyle \mathbf{1}}{\underline{\mathbf{2}}}$}} \ 1 \ 2 \ \textcolor{dark}{\raisebox{-1ex}{$\overset{\displaystyle \mathbf{1}}{\underline{\mathbf{2}}}$}} \ 1 \ 2 \ \textcolor{blue}{\underline{\mathbf{2}}} \ 1 \  2 \\
\mathcal{L}(\promotion(U)) &= \hspace{2ex} 1 \ \raisebox{-1ex}{$\overset{\displaystyle 1}{2}$} \ 2 \ 1 \ 1 \ 2 \ \raisebox{-1ex}{$\overset{\displaystyle 1}{2}$} \ 1 \ 2 \ \raisebox{-1ex}{$\overset{\displaystyle 1}{2}$} \ 1 \ 2  \ 2 \qedhere
\end{align*}
\end{example}

We now recall a construction from \cite{Pechenik:CSP}.
For $q$ a positive integer, a \newword{set partition} of $q$ is $\pi=\{\pi_1,\pi_2,\ldots,\pi_d\}$, where 
\begin{itemize}
    \item each $\pi_i \neq \emptyset$,
    \item $\bigcup_{i} \pi_i = [q]$, and
    \item $\pi_i \cap \pi_j = \emptyset$ if $i \neq j$.
\end{itemize}
The sets $\pi_i$ are called the \newword{blocks} of the set partition. 

We draw a set partition $\pi = \{\pi_1, \dots, \pi_d \}$ of $q$ by placing dots labeled $1, 2, \dots q$ clockwise around the boundary of a disk and then, for each $\pi_i$, drawing the convex hull of the boundary dots whose labels are in $\pi_i$. We call the set partition a \newword{noncrossing set partition} if these convex hulls do not intersect. We use $\nc(q)$ to denote the set of noncrossing set partitions of $q$.

\begin{lemma}[\cite{Pechenik:CSP}]\label{lem:equivariant_bij}
    Let $T \in \inc^q(2 \times c)$. There is a unique noncrossing set partition $\pi(T)$ of $[q]$ such that 
    \begin{itemize}
        \item $i \in [q]$ is a singleton block if and only if $i$ does not appear in $T$;
        \item $i$ is the least element of a nontrivial block if and only if $i$ appears only in the top row of $T$;
        \item $i$ is the greatest element of a nontrivial block if and only if $i$ appears only in the bottom row of $T$; and
        \item $i$ is neither least nor greatest in its block if $i$ appears in both rows of $T$.
    \end{itemize}
    The map
    \[\pi : \bigcup_c\inc^q(2 \times c) \to \nc(q)\]
    is a bijection intertwining tableau promotion with set partition rotation.
\end{lemma}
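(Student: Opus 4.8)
The plan is to work entirely through the lattice word $w = \lattice(T) = w_1 \cdots w_q$, which for a two-row tableau has each letter $w_i \in \{\varnothing, \{1\}, \{2\}, \{1,2\}\}$ recording whether $i$ is absent, appears only in the top row, only in the bottom row, or in both rows. First I would record, for each $i \in [q]$, one of four \emph{types} — absent, top-only, bottom-only, or both — read directly off $w_i$. The four bulleted conditions are then exactly the requirement that in $\pi(T)$ these types correspond respectively to singletons, block-minima, block-maxima, and interior elements of nontrivial blocks.

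To construct $\pi(T)$ and simultaneously prove existence, I would run a stack/parenthesization procedure on the type sequence: a top-only element opens an arc, a bottom-only element closes the most recently opened unclosed arc, and a both element first closes the most recent open arc and then opens a new one (processing its closing before its opening). The connected components of the resulting arc graph are declared to be the blocks of $\pi(T)$. Two inputs from the increasing-tableau structure make this well defined: since each value occurs at most once per row, each $i$ contributes at most one arc-start and at most one arc-end; and since $w$ is a lattice word with equally many $1$s and $2$s (as $T$ is rectangular with $c$ columns), the number of arc-ends never exceeds the number of arc-starts in any prefix and the two totals agree, so the stack never underflows and every arc is matched. The standard stack argument then shows the arc graph is noncrossing, hence so is $\pi(T)$; and by construction a top-only element has an arc-start but no arc-end (so it is a block minimum), a bottom-only element is a block maximum, a both element is interior, and an absent element is a singleton, verifying the four properties.

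For uniqueness and bijectivity I would argue as follows. The four conditions determine the type of every $i$, and a noncrossing partition is determined by its type sequence: reading left to right, the noncrossing condition forces each arc-end to be joined to the nearest currently-open arc-start, so the arc diagram, and hence the partition, is forced. This gives uniqueness of $\pi(T)$. For bijectivity I would exhibit the inverse: from a noncrossing partition read off the type sequence (minimum $\mapsto$ top, maximum $\mapsto$ bottom, interior $\mapsto$ both, singleton $\mapsto$ absent) to recover a lattice word whose prefixes satisfy the ballot inequality, and reconstruct the unique two-row increasing tableau with that lattice word. Counting arcs shows the image consists of the noncrossing partitions of $[q]$ with exactly $c$ arcs (equivalently $q-c$ blocks), and $\pi$ is a bijection onto this image.

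Finally, to show $\pi$ intertwines $K$-promotion with rotation, I would invoke \Cref{thm:Kpromotion_2row_via_balance_points}, which computes $\lattice(\promotion(T))$ from $w$ by deleting $w_1$, shifting indices down by one, appending a new last letter, and modifying the letters at the first teetering and first balance points. Passing to types, deleting $w_1$ and shifting corresponds to deleting element $1$ and relabeling $j \mapsto j-1$, while appending corresponds to inserting a new maximal element $q$; these are exactly the bookkeeping operations of one step of rotation. The substance is then to check, case by case as in \Cref{thm:Kpromotion_2row_via_balance_points}, that the prescribed changes at the first teetering and balance points are precisely the rewiring of the arcs incident to the old minimum $1$ as it becomes the new maximum $q$ under rotation: in Case (1) a singleton stays a singleton; in Cases (2)--(3) the element $1$ is forced to be an opener (since $1$, being globally minimal, can never lie in the bottom row), and its outgoing arc is transferred so that the former teetering/balance points change type exactly as rotation of $\pi(T)$ predicts. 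I expect this final case-matching to be the main obstacle, since it requires the most careful bookkeeping; the structural heart that makes every case align — namely that the both-rows (teetering-type) elements are exactly the interior elements of blocks and that the columns of $T$ are the covering arcs of $\pi(T)$ — is already supplied by the construction above.
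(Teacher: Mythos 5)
Your proof is correct, but it takes a genuinely different route from the paper, which simply cites prior work: the packed case of the bijection and its equivariance are quoted from \cite{Pechenik:CSP} (with an alternative via composing known equivariant bijections), and the extension to non-packed tableaux via singleton blocks is declared straightforward. You instead build everything internally: the stack construction on the type sequence $\{\varnothing,\{1\},\{2\},\{1,2\}\}$ correctly produces the blocks (the column-strictness of $T$ is exactly what guarantees the stack never underflows, as you note via the ballot inequality), the LIFO-forcing argument gives uniqueness, and the equivariance is deduced from \Cref{thm:Kpromotion_2row_via_balance_points}, whose proof in the paper is independent of this lemma, so there is no circularity. A pleasant feature of your route is that the dictionary you establish along the way --- first balance point $=\max B_1$, teetering points before it $=$ interior elements of $B_1$ --- is precisely the content of \Cref{lem:teetering_and_balance_are_the_first_block}, which the paper proves separately \emph{using} this lemma; your approach would let one prove that dictionary directly from the stack construction and obtain both statements at once. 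You are also more careful than the stated lemma on one point: for fixed $c$ the map cannot be onto all of $\nc(q)$, and your identification of the image as the noncrossing partitions with exactly $c$ arcs (equivalently $q-c$ blocks) is the correct reading. The only cost of your approach is length: the final case-matching between the rewiring of $B_1$ under rotation and Cases (1) and (3) of \Cref{thm:Kpromotion_2row_via_balance_points} (Case (2) cannot occur for rectangular $T$) is routine but must be written out, whereas the paper's citation disposes of it in three sentences.
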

\begin{proof}
    The case of the bijection with packed tableaux and set partitions without singleton blocks is \cite[Proposition~2.3]{Pechenik:CSP}; the equivariance in that setting is \cite[Lemma 6.2]{Pechenik:CSP}. It is straightforward to see that one still has an equivariant bijection in the general case by making values $i$ that do not appear in $T$ correspond to singleton blocks. 
    
    Alternatively, one can obtain this lemma by composing the equivariant bijection of \cite[Lemma~4.2]{Dilks.Pechenik.Striker} with that of \cite[Theorem~7.8]{Striker.Williams}.
\end{proof}

We now connect the previous result to the idea of balance and teetering points.

\begin{lemma}\label{lem:teetering_and_balance_are_the_first_block}
    Let $T \in \inc^q(2 \times c)$, let $w = w_1 \ldots w_q = \lattice(T)$, and let $\pi = \pi(T)$. Write $B_1$ for the block of $\pi$ containing $1$. Then
    \begin{itemize}
        \item $w_1 = \varnothing$ if and only if $|B_1| = 1$;
        \item if $w_1 \neq \varnothing$, then the first balance point $j_b$ of $T$ equals $\max B_1$;
        \item if $w_1 \neq \varnothing$, then there is a teetering point before $j_b$ if and only if $|B_1| > 2$;
        \item if $w_1 \neq \varnothing$ and there is a teetering point before $j_b$, then these teetering points are exactly the elements of $B_1 \setminus \{ 1, j_b\}$; in particular, the first teetering point $j_t$ in this case equals $\min B_1 \setminus \{1\}$.
    \end{itemize}
\end{lemma}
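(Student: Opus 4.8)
The plan is to push everything through the single statistic
\[
d(j) \coloneqq \#\{1\text{s among } w_1,\dots,w_j\} - \#\{2\text{s among } w_1,\dots,w_j\},
\]
and to read the block structure of $\pi = \pi(T)$ off of it. First I would record the trivial reformulations of \Cref{def:Kbalance_point} in the two-row setting: an index $j$ is a balance point exactly when $d(j) = 0$, and $j$ is a teetering point exactly when $w_j = \{1,2\}$ and $d(j) = 1$ (since $a(j) = b(j)+1$ is the same as $d(j)=1$). The lattice condition guarantees $d(j) \ge 0$ throughout.

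The crucial step is a dictionary between $d$ and the spans of the blocks. By \Cref{lem:equivariant_bij}, the letters $w_j \in \{\varnothing,\{1\},\{2\},\{1,2\}\}$ correspond respectively to $j$ being a singleton, the least element (an \emph{opener}), the greatest element (a \emph{closer}), or an interior element (a \emph{transient}) of its block of $\pi$. A transient contributes one $1$ and one $2$, so these cancel in $d(j)$, leaving
\[
d(j) = \#\{B : \min B \le j\} - \#\{B : \max B \le j\} = \#\{B : \min B \le j < \max B\},
\]
where $B$ ranges over the nontrivial blocks of $\pi$. In words, $d(j)$ counts the nontrivial blocks whose span $[\min B, \max B]$ is active at $j$. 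The noncrossing hypothesis makes the family of spans laminar, and this laminarity drives the remaining deductions.

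With this dictionary the four bullets follow in turn. The first is immediate: $w_1 = \varnothing$ iff $1$ is a singleton iff $|B_1| = 1$. Assume now $w_1 \neq \varnothing$, so $1$ is an opener and $B_1 = \{1 = b_0 < b_1 < \cdots < b_m\}$ is nontrivial with $m \ge 1$. Since $1$ is the global minimum, the span of $B_1$ is $[1,\max B_1]$ and is active for all $1 \le j < \max B_1$, so $d(j) \ge 1$ there; at $j = \max B_1$, every block with $\min B \le \max B_1$ must (by noncrossing with $B_1$) also satisfy $\max B \le \max B_1$, whence $d(\max B_1) = 0$. Thus $\max B_1$ is the first balance point, giving the second bullet. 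For the teetering points I would evaluate $d(b_i)$ at an interior element $b_i$ ($1 \le i \le m-1$): the block $B_1$ is active there, and I claim it is the \emph{only} active block, so $d(b_i) = 1$ and $b_i$ is a teetering point. Since interior elements exist iff $m \ge 2$ iff $|B_1| > 2$, and the least one is $\min(B_1 \setminus \{1\})$, the third and fourth bullets will follow once the converse is in place; but the converse is easy, as a teetering point $j < \max B_1$ is a transient (of some block $B'$, so $B'$ is active at $j$) with $d(j) = 1$, and $B_1$ is itself active at $j$, forcing $B_1$ to be the unique active block and hence $B' = B_1$, i.e.\ $j \in B_1 \setminus \{1,\max B_1\}$.

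The main obstacle is the claim that $B_1$ is the only block active at an interior element $b_i$. I expect to prove this by a direct crossing argument: a second active block $B$ would have $\min B < b_i < \max B$ with $b_i \in B_1$, where $\min B$ lies strictly between two consecutive elements of $B_1$ below $b_i$, while $\max B$ lies either below $\max B_1$ or above it. Pairing $\min B, \max B$ with two suitable elements of $B_1$ (namely $1$ and $\max B_1$ when $\max B > \max B_1$, and an intermediate element $b_{s+1}$ together with $\max B_1$ when $\max B < \max B_1$) produces an alternating four-tuple $a < b < c < d$ witnessing a crossing, contradicting that $\pi$ is noncrossing. Locating $\min B$ and $\max B$ relative to the elements of $B_1$ is the only place that requires genuine care; the rest is bookkeeping with $d(j)$.
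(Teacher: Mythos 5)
Your proof is correct and follows essentially the same route as the paper: both translate the letters $\varnothing,\{1\},\{2\},\{1,2\}$ into singletons, least elements, greatest elements, and interior elements of blocks via \Cref{lem:equivariant_bij} and then count openers against closers in initial segments. Your version merely makes explicit (via the discrepancy statistic $d(j)$ and the crossing arguments) the laminarity facts that the paper's proof leaves implicit when it asserts that $\max B_1$ is the first index with equally many least and greatest elements and that the interior elements of $B_1$ are exactly the teetering points before $j_b$.
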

\begin{proof}
The first bullet point is immediate by the first bullet point of \Cref{lem:equivariant_bij}. For the second bullet point, we have that $\max B_1$ is the smallest number such that $\{1, \dots \max B_1 \}$ contains equal numbers of least elements of blocks and greatest elements of blocks. By the bijection of \Cref{lem:equivariant_bij}, this means that $\max B_1$ is least such that each row of $T$ contains equal numbers of labels from $\{1, \dots \max B_1 \}$. By definition, this is the first balance point $j_b$ of $\lattice(T)$.
If $|B_1| \geq 2$, then the values $B_1 \setminus \{1, j_b \}$ are exactly the teetering points before $j_b$, establishing the third and fourth bullet points.
\end{proof}

We now have all the ingredients to prove our main result of this section. Recall \Cref{def:complete_digraph} for our convention on complete digraphs.

\begin{theorem}\label{thm:complete_graphs}
    Let $T\in\inc^q(2 \times c)$ and $\pi(T)$ its corresponding noncrossing set partition. Then $\prom_1(T)$ is a union of complete digraphs corresponding to the blocks of $\pi(T)$.
\end{theorem}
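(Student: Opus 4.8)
The plan is to prove the following sharper statement, from which the theorem is immediate: for every $\alpha \in [q]$, the out-neighbors of $\alpha$ in $\prom_1(T)$ are exactly $B_\alpha \setminus \{\alpha\}$, where $B_\alpha$ denotes the block of $\pi(T)$ containing $\alpha$. Granting this, any two distinct elements $\alpha,\gamma$ of a common block $B$ yield both arrows $\alpha \to \gamma$ and $\gamma \to \alpha$, while no arrow joins vertices of different blocks; hence $\prom_1(T)$ is precisely the union over blocks of the complete digraphs on those blocks, a singleton block contributing an isolated vertex. The argument rests on two ingredients: a ``base case'' describing a single gromotion step, and an equivariance statement reducing each step to the base case.

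For the base case, I would show that for any $S \in \inc^q(2\times c)$ with smallest entry $s$, the set of values that move from row $2$ to row $1$ during one gromotion of $S$ equals $B \setminus \{s\}$, where $B$ is the block of $\pi(S)$ containing $s$. This combines \Cref{thm:Kpromotion_2row_via_balance_points}, whose proof shows that the values moving up under gromotion are exactly the teetering points lying before the first balance point $j_b$ together with $j_b$ itself, with \Cref{lem:teetering_and_balance_are_the_first_block}, which identifies $j_b = \max B$ and these teetering points as $B \setminus \{s, j_b\}$; their union is $B \setminus \{s\}$. Since $S$ is rectangular, only Cases $(1)$ and $(3)$ of \Cref{thm:Kpromotion_2row_via_balance_points} occur. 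The two lemmas are stated for the standard alphabet with smallest entry $1$, but their proofs use only the order structure and so apply verbatim with $1$ replaced by the minimal entry $s$; the degenerate case where $s$ does not appear, i.e.\ $B = \{s\}$, is Case $(1)$, in which nothing moves up.

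The key equivariance is that gromotion preserves the noncrossing set partition: $\pi(\gromotion^m(T)) = \pi(T)$ as set partitions of $[q]$, for every $m$. To see this, recall that promotion and gromotion perform identical jeu-de-taquin slides and differ only in the final relabeling, so $\promotion(T) = \phi(\gromotion(T))$, where $\phi$ is the order-isomorphism $\grow(\cA) \to \cA$ that cyclically decrements labels. Because $\phi$ is an order-isomorphism it commutes with the construction of $\pi$, giving $\pi(\gromotion(T)) = \phi^{-1}(\pi(\promotion(T)))$; by \Cref{lem:equivariant_bij}, $\pi(\promotion(T))$ is the rotation of $\pi(T)$ sending $i \mapsto i-1$ (cyclically), which is exactly inverted by the cyclic increment $\phi^{-1}$. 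Hence $\pi(\gromotion(T)) = \pi(T)$, and iterating gives $\pi(\gromotion^m(T)) = \pi(T)$.

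With these in hand I would finish as follows. The arrows out of $\alpha$ in $\prom_1(T)$ record which values move from row $2$ to row $1$ during one gromotion applied to $S \coloneqq \gromotion^{\alpha-1}(T)$. The tableau $S$ has minimal entry $\alpha$, and by the previous paragraph $\pi(S) = \pi(T)$, so the block of $\pi(S)$ containing its minimal entry is exactly $B_\alpha$. The base case then gives that the out-neighbors of $\alpha$ are $B_\alpha \setminus \{\alpha\}$, as claimed. I expect the main obstacle to be the bookkeeping in the equivariance step, namely matching the cyclic direction of the label-shift $\phi$ against the rotation of \Cref{lem:equivariant_bij} so that they cancel, together with verifying that the balance- and teetering-point lemmas transfer cleanly from the minimal value $1$ to the minimal value of a gromotion-shifted tableau.
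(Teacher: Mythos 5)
Your proposal is correct and follows essentially the same route as the paper: establish the base case that the out-neighbors of the minimal entry are exactly the first balance point together with the earlier teetering points, identify these with the rest of that entry's block via \Cref{lem:teetering_and_balance_are_the_first_block}, and then propagate to all vertices using the promotion-equivariance of $\pi$ from \Cref{lem:equivariant_bij}. Your write-up is merely more explicit than the paper's about the bookkeeping between promotion and gromotion (i.e., that the cyclic relabeling cancels the set-partition rotation, so $\pi$ is gromotion-invariant), which the paper compresses into the phrase ``by the equivariance of the map $\pi$.''
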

\begin{proof}
By \Cref{thm:Kpromotion_2row_via_balance_points}, $\prom_1(T)$ has an arrow directed from vertex $1$ to vertex $j_b$, where $j_b$ is the first balance point, and to each teetering point appearing before $j_b$. By \Cref{lem:teetering_and_balance_are_the_first_block}, these are exactly all the other elements in the same block as $1$ in $\pi(T)$.

By the equivariance of the map $\pi$, we therefore have that each vertex $v$ has arrows in $\prom_1(T)$ directed exactly to those other $w$ lying in the same block of $\pi(T)$ as $v$. Thus $\prom_1(T)$ is the desired union of complete digraphs.
\end{proof}

\begin{example}\label{ex:complete_digraph}
    Recall the tableau $U$ from \Cref{ex:inc_rect_lattice_word}. Then the corresponding set partition is 
    \[
    \begin{tikzpicture}[scale=2, every node/.style={circle, fill=black, inner sep=1pt}]
  \def\n{13}
  \def\r{1}
  \foreach \i in {1,...,\n} {
    \pgfmathsetmacro\angle{90 - 360*(\i-1)/\n}
    \coordinate (V\i) at ({\r*cos(\angle)}, {\r*sin(\angle)});
  }
  \draw[gray, thin] (0,0) circle(\r);
  \begin{scope}
    \fill[blue!10]
      (V2) to[bend right=20] (V3)
           to[bend right=20] (V4)
           to[bend left=20]  (V2);
  \end{scope}
  \begin{scope}
    \fill[blue!10]
      (V1) to[bend right=25] (V5)
           to[bend right=25] (V8)
           to[bend right=25] (V11)
           to[bend right=25] (V1);
  \end{scope}
  \foreach \i in {1,...,\n} {
    \node at (V\i) {};
    \node[fill=none, inner sep=0pt, anchor=center] at ($1.15*(V\i)$) {\i};
  }
  \draw[thick, blue] (V9) to[bend right=20] (V10);
  \draw[thick, blue] (V6) to[bend right=20] (V7);
  \draw[thick, blue] (V12) to[bend right=20] (V13);
  \draw[thick, blue] (V2) to[bend right=20] (V3);
  \draw[thick, blue] (V3) to[bend right=20] (V4);
  \draw[thick, blue] (V4) to[bend left=20]  (V2);
  \draw[thick, blue] (V1) to[bend right=25] (V5);
  \draw[thick, blue] (V5) to[bend right=25] (V8);
  \draw[thick, blue] (V8) to[bend right=25] (V11);
  \draw[thick, blue] (V11) to[bend right=25] (V1);
  \node[fill=none, anchor=east, xshift=-1.0cm] at (V11) {$\pi(U) =$};
\end{tikzpicture}
    \]
    We also have 
    \[
    \begin{tikzpicture}[scale=2, every node/.style={minimum size=3mm}]
  \node (label) at (-0.9, -0.6) {$\prom_1(U) =$};
  \node (1)  at (0, 0)     {1};
  \node (5)  at (1, 0)     {5};
  \node (8)  at (1, -1)    {8};
  \node (11) at (0, -1)    {11};
  \foreach \i in {1,5,8,11} {
    \foreach \j in {1,5,8,11} {
      \ifthenelse{\i=\j}{}{\draw[->, thick] (\i) -- (\j);}
    }
  }
  \node (2) at (2.4, 0)    {2};
  \node (3) at (2, -0.7)   {3};
  \node (4) at (2.8, -0.7) {4};
  \foreach \i in {2,3,4} {
    \foreach \j in {2,3,4} {
      \ifthenelse{\i=\j}{}{\draw[->, thick] (\i) -- (\j);}
    }
  }
  \node (6) at (0.3, 0.4) {6};
  \node (7) at (0.9, 0.4) {7};
  \draw[<->, thick] (6) -- (7);
  \node (9)  at (2.0, 0.4)  {9};
  \node (10) at (2.6, 0.4) {10};
  \draw[<->, thick] (9) -- (10);
  \node (12) at (1.1, -1.4) {12};
  \node (13) at (1.7, -1.4) {13};
  \draw[<->, thick] (12) -- (13);
\end{tikzpicture}
    \]
    in accordance with \Cref{thm:complete_graphs}.
\end{example}

\begin{remark}
    One could write a balance and teetering point characterization of promotion (the analogue of \Cref{thm:Kpromotion_2row_via_balance_points}) for increasing tableaux with more than two rows, but the characterization gets increasingly technical. As we currently have no application of such a formula, such as an analogue of \Cref{thm:complete_graphs}, we do not develop this here. Rather, in the next section, we discuss our motivating conjecture regarding 3-row increasing tableaux.
\end{remark}

\subsection{Trip digraphs and three-row increasing tableaux}\label{sec:trips}
We interpret the results of the prior subsection in terms of \emph{trip digraphs} of webs and give a motivating conjecture relating three-row increasing tableaux and their promotion digraphs to certain planar diagrams introduced recently by Jesse Kim \cite{Kim:embedding, Kim:flamingo}.

\begin{definition}[\cite{Postnikov}]
    A \newword{plabic graph} is a planar graph embedded in a disk such that all boundary vertices $b_1,b_2,\ldots,b_n$ have degree $1$ and each vertex is colored either black or white (not necessarily in a proper coloring). Plabic graphs are considered up to planar isotopy.
\end{definition}
    
\begin{definition}\label{def:trip_digraphs}
    For $1 \leq i \leq r-1$, the \newword{$(i,r)$-trip digraph} $\trip_{i,r}(G)$ of a plabic graph $G$ with minimum internal degree at least $r$ is defined as follows. 

    Starting at boundary vertex $b_{\alpha}$, travel along internal edges until another boundary vertex is reached. At each internal vertex $v$ of degree $h$ reached by edge $e$, if $v$ is white, leave $v$ by taking the $(i+\ell)$th left for some $0\leq \ell\leq h-r$, and  if $v$ is black, by taking the $(i+\ell)$th right for some $0\leq \ell\leq h-r$. If it is possible to reach boundary vertex $b_{\beta}$ by some sequence of allowed turn choices, then $\trip_{i,r}(G)$ has a directed edge $\alpha \to \beta$. 
\end{definition}

The previous \Cref{def:trip_digraphs} is new, although it is a variant of Postnikov's definition \cite{Postnikov} of a \emph{trip permutation} of a plabic graph. \Cref{def:trip_digraphs} recovers the functional digraph of Postnikov's trip permutation when the plabic graph $G$ has all internal degrees equal to $r$ and $i = 1$. When $G$ has all internal degrees equal to $r$ and $i$ is arbitrary, we obtain the functional digraphs of the trip permutations of \cite{Gaetz.Pechenik.Pfannerer.Striker.Swanson:SL4,Gaetz.Pechenik.Pfannerer.Striker.Swanson:2column}. When $G$ has an internal vertex of degree strictly greater than $r$, our digraph is significantly more complicated than any of these.

\begin{example}\label{ex:flamingo_trip_digraph}
We illustrate the trip digraphs for the plabic graph below with $r=3$.  

\begin{tikzpicture}[scale=2, every node/.style={font=\small}]
\tikzstyle{filled} = [circle, draw=black, fill=black, inner sep=1.8pt]
\tikzstyle{unfilled} = [circle, draw=black, fill=white, inner sep=1.8pt]
\def\outerradius{1.2}         
\def\innerradius{0.45}        
\def\outerwhiteradius{0.70}   
\def\labeloffset{0.12}        
\foreach \i in {1,...,10} {
  \node[filled] (v\i) at ({90-36*(\i-1)}:\outerradius) {};
  \ifnum\i=1
    \node at ({90-36*(\i-1)}:{\outerradius+\labeloffset}) {$b_{10}$};
  \else
    \node at ({90-36*(\i-1)}:{\outerradius+\labeloffset}) {$b_{\number\numexpr\i-1\relax}$};
  \fi
}
\draw[thick] (0,0) circle (\outerradius);
\foreach \i [count=\j from 1] in {90,150,210,270,330,30} {
  \ifodd\j
    \node[filled] (h\j) at (\i:\innerradius) {};
  \else
    \node[unfilled] (h\j) at (\i:\innerradius) {};
  \fi
}
\foreach \j in {1,...,6} {
  \pgfmathtruncatemacro{\k}{mod(\j,6)+1}
  \draw (h\j) -- (h\k);
}
\foreach \j/\angle in {1/90,3/210,5/330} {
  \node[unfilled] (w\j) at (\angle:\outerwhiteradius) {};
  \draw (h\j) -- (w\j); 
}
\draw (v1)--(w1)--(v2);
\draw (v10)--(w1)--(h1)--(h2)--(v9);
\draw (v8)--(w3)--(h3);
\draw (v7)--(w3);
\draw (v6)--(h4);
\draw (v5)--(w5)--(v4);
\draw (v3)--(h6);
\end{tikzpicture}\hspace{.3in}
\raisebox{1.2in}{$\begin{array}{l}
\raisebox{.5in}{$\trip_{1,3}(W)=$}\begin{tikzpicture}
        \node (1) at (0,0) {$1$};
        \node (6) at (1,0) {$6$};
        \node (7) at (2,0) {$7$};
        \node (9) at (1, -1) {$9$};
        \node (10) at (0,-2) {$10$};
        \node (4) at (1,-2) {$4$};
        \node (3) at (2,-2) {$3$};
        \node (2) at (3,-.5) {$2$};
        \node (5) at (2.5,-1.5) {$5$};
        \node (8) at (3.5,-1.5) {$8$};
        \draw[->, thick] (1) to (6);
        \draw[->, thick] (6) to (7);
        \draw[->, thick] (7) to (3);
        \draw[->, thick] (1) to (9);
        \draw[->, thick] (9) to (1);
        \draw[->, thick] (9) to (10);
        \draw[->, thick] (4) to (9);
        \draw[->, thick] (4) to (10);
        \draw[->, thick] (3) to (4);
        \draw[->, thick] (10) to (1);
        \draw[->, thick] (10.north) to [out=140,in=140, looseness=1.8] (6.north);
        \draw[->, thick] (5) to (8);
        \draw[->, thick] (8) to (2);
        \draw[->, thick] (2) to (5);
    \end{tikzpicture}\\
\raisebox{.5in}{$\trip_{2,3}(W)=$}
\begin{tikzpicture}
        \node (1) at (0,0) {$1$};
        \node (6) at (1,0) {$6$};
        \node (7) at (2,0) {$7$};
        \node (9) at (1, -1) {$9$};
        \node (10) at (0,-2) {$10$};
        \node (10b) at (-.1,-1.8) {};
        \node (4) at (1,-2) {$4$};
        \node (3) at (2,-2) {$3$};
        \node (2) at (3,-.5) {$2$};
        \node (5) at (2.5,-1.5) {$5$};
        \node (8) at (3.5,-1.5) {$8$};
        \draw[<-, thick] (1) to (6);
        \draw[<-, thick] (6) to (7);
        \draw[<-, thick] (7) to (3);
        \draw[<-, thick] (1) to (9);
        \draw[<-, thick] (9) to (1);
        \draw[<-, thick] (9) to (10);
        \draw[<-, thick] (4) to (9);
        \draw[<-, thick] (4) to (10);
        \draw[<-, thick] (3) to (4);
        \draw[<-, thick] (10) to (1);
        \draw[<-, thick] (10b.north) to [out=140,in=140, looseness=1.8] (6.north);
        \draw[<-, thick] (5) to (8);
        \draw[<-, thick] (8) to (2);
        \draw[<-, thick] (2) to (5);
    \end{tikzpicture}
\end{array}$}

The memorious reader may notice that these are the same digraphs as in \Cref{ex:inc_promotion_digraph_rectangle};  \Cref{conj:trip=prom,conj:inc_tab_determined} together propose an explanation for this phenomenon.

We now explain the calculation of the trip digraphs above. We see, for example, the outgoing arrows from $1$ in $\trip_{1,3}(W)$ coming from the paths in $W$ highlighted below on the left and the outgoing arrows from $1$ in $\trip_{2,3}(W)$ coming from the paths highlighted on the right. Note that in each case, after reaching a $4$-valent vertex, the path splits in two directions.

\begin{center}
\begin{tikzpicture}[scale=2, every node/.style={font=\small}]
\tikzstyle{filled} = [circle, draw=black, fill=black, inner sep=1.8pt]
\tikzstyle{unfilled} = [circle, draw=black, fill=white, inner sep=1.8pt]
\def\outerradius{1.2}         
\def\innerradius{0.45}        
\def\outerwhiteradius{0.70}   
\def\labeloffset{0.12}        
\foreach \i in {1,...,10} {
  \node[filled] (v\i) at ({90-36*(\i-1)}:\outerradius) {};
  \ifnum\i=1
    \node at ({90-36*(\i-1)}:{\outerradius+\labeloffset}) {$b_{10}$};
  \else
    \node at ({90-36*(\i-1)}:{\outerradius+\labeloffset}) {$b_{\number\numexpr\i-1\relax}$};
  \fi
}
\draw[thick] (0,0) circle (\outerradius);
\foreach \i [count=\j from 1] in {90,150,210,270,330,30} {
  \ifodd\j
    \node[filled] (h\j) at (\i:\innerradius) {};
  \else
    \node[unfilled] (h\j) at (\i:\innerradius) {};
  \fi
}
\foreach \j in {1,...,6} {
  \pgfmathtruncatemacro{\k}{mod(\j,6)+1}
  \draw (h\j) -- (h\k);
}
\foreach \j/\angle in {1/90,3/210,5/330} {
  \node[unfilled] (w\j) at (\angle:\outerwhiteradius) {};
  \draw (h\j) -- (w\j); 
}
\draw (v1)--(w1)--(v2);
\draw (v10)--(w1)--(h1)--(h2)--(v9);
\draw (v8)--(w3)--(h3);
\draw (v7)--(w3);
\draw (v6)--(h4);
\draw (v5)--(w5)--(v4);
\draw (v3)--(h6);
\draw[ultra thick, amethyst,oriented=>] (v2)--(w1);
\draw [decorate, decoration={zigzag, segment length=2mm, amplitude=.7mm}, amethyst] (w1)--(h1)--(h2)--(h3)--(w3)--(v7);
\draw[decorate, decoration={snake, segment length=2mm, amplitude=1mm}, amethyst] (w1)--(v10);
\end{tikzpicture}
\begin{tikzpicture}[scale=2, every node/.style={font=\small}]
\tikzstyle{filled} = [circle, draw=black, fill=black, inner sep=1.8pt]
\tikzstyle{unfilled} = [circle, draw=black, fill=white, inner sep=1.8pt]
\def\outerradius{1.2}         
\def\innerradius{0.45}        
\def\outerwhiteradius{0.70}   
\def\labeloffset{0.12}        
\foreach \i in {1,...,10} {
  \node[filled] (v\i) at ({90-36*(\i-1)}:\outerradius) {};
  \ifnum\i=1
    \node at ({90-36*(\i-1)}:{\outerradius+\labeloffset}) {$b_{10}$};
  \else
    \node at ({90-36*(\i-1)}:{\outerradius+\labeloffset}) {$b_{\number\numexpr\i-1\relax}$};
  \fi
}
\draw[thick] (0,0) circle (\outerradius);
\foreach \i [count=\j from 1] in {90,150,210,270,330,30} {
  \ifodd\j
    \node[filled] (h\j) at (\i:\innerradius) {};
  \else
    \node[unfilled] (h\j) at (\i:\innerradius) {};
  \fi
}
\foreach \j in {1,...,6} {
  \pgfmathtruncatemacro{\k}{mod(\j,6)+1}
  \draw (h\j) -- (h\k);
}
\foreach \j/\angle in {1/90,3/210,5/330} {
  \node[unfilled] (w\j) at (\angle:\outerwhiteradius) {};
  \draw (h\j) -- (w\j); 
}
\draw (v1)--(w1)--(v2);
\draw (v10)--(w1)--(h1)--(h2)--(v9);
\draw (v8)--(w3)--(h3);
\draw (v7)--(w3);
\draw (v6)--(h4);
\draw (v5)--(w5)--(v4);
\draw (v3)--(h6);
\draw [ultra thick, dark,oriented=>] (v2) -- (w1);
\draw [decorate, decoration={snake, segment length=2mm, amplitude=1mm}, dark] (w1)--(v1);
\draw[decorate, decoration={zigzag, segment length=2mm, amplitude=.7mm}, dark] (w1)--(v10);
\end{tikzpicture}
\end{center}

We see the outgoing arrow from $5$ in $\trip_{1,3}(W)$ coming from the path in $W$ highlighted below on the left and the outgoing arrow from $5$ in $\trip_{2,3}(W)$ coming from the path highlighted on the right. Note that if all the vertices visited by the trip are of degree $3$, the path never splits, and there is only one outgoing arrow on the trip digraph.

\[
\begin{tikzpicture}[scale=2, every node/.style={font=\small}]
\tikzstyle{filled} = [circle, draw=black, fill=black, inner sep=1.8pt]
\tikzstyle{unfilled} = [circle, draw=black, fill=white, inner sep=1.8pt]
\def\outerradius{1.2}         
\def\innerradius{0.45}        
\def\outerwhiteradius{0.70}   
\def\labeloffset{0.12}        
\foreach \i in {1,...,10} {
  \node[filled] (v\i) at ({90-36*(\i-1)}:\outerradius) {};
  \ifnum\i=1
    \node at ({90-36*(\i-1)}:{\outerradius+\labeloffset}) {$b_{10}$};
  \else
    \node at ({90-36*(\i-1)}:{\outerradius+\labeloffset}) {$b_{\number\numexpr\i-1\relax}$};
  \fi
}
\draw[thick] (0,0) circle (\outerradius);
\foreach \i [count=\j from 1] in {90,150,210,270,330,30} {
  \ifodd\j
    \node[filled] (h\j) at (\i:\innerradius) {};
  \else
    \node[unfilled] (h\j) at (\i:\innerradius) {};
  \fi
}
\foreach \j in {1,...,6} {
  \pgfmathtruncatemacro{\k}{mod(\j,6)+1}
  \draw (h\j) -- (h\k);
}
\foreach \j/\angle in {1/90,3/210,5/330} {
  \node[unfilled] (w\j) at (\angle:\outerwhiteradius) {};
  \draw (h\j) -- (w\j); 
}
\draw (v1)--(w1)--(v2);
\draw (v10)--(w1)--(h1)--(h2)--(v9);
\draw (v8)--(w3)--(h3);
\draw (v7)--(w3);
\draw (v6)--(h4);
\draw (v5)--(w5)--(v4);
\draw (v3)--(h6);
\draw [ultra thick, amethyst] (h4)--(h3)--(h2)--(v9);
\draw[ultra thick, amethyst,oriented=>] (v6) -- (h4);
\end{tikzpicture}
\begin{tikzpicture}[scale=2, every node/.style={font=\small}]
\tikzstyle{filled} = [circle, draw=black, fill=black, inner sep=1.8pt]
\tikzstyle{unfilled} = [circle, draw=black, fill=white, inner sep=1.8pt]
\def\outerradius{1.2}         
\def\innerradius{0.45}        
\def\outerwhiteradius{0.70}   
\def\labeloffset{0.12}        
\foreach \i in {1,...,10} {
  \node[filled] (v\i) at ({90-36*(\i-1)}:\outerradius) {};
  \ifnum\i=1
    \node at ({90-36*(\i-1)}:{\outerradius+\labeloffset}) {$b_{10}$};
  \else
    \node at ({90-36*(\i-1)}:{\outerradius+\labeloffset}) {$b_{\number\numexpr\i-1\relax}$};
  \fi
}
\draw[thick] (0,0) circle (\outerradius);
\foreach \i [count=\j from 1] in {90,150,210,270,330,30} {
  \ifodd\j
    \node[filled] (h\j) at (\i:\innerradius) {};
  \else
    \node[unfilled] (h\j) at (\i:\innerradius) {};
  \fi
}
\foreach \j in {1,...,6} {
  \pgfmathtruncatemacro{\k}{mod(\j,6)+1}
  \draw (h\j) -- (h\k);
}
\foreach \j/\angle in {1/90,3/210,5/330} {
  \node[unfilled] (w\j) at (\angle:\outerwhiteradius) {};
  \draw (h\j) -- (w\j); 
}
\draw (v1)--(w1)--(v2);
\draw (v10)--(w1)--(h1)--(h2)--(v9);
\draw (v8)--(w3)--(h3);
\draw (v7)--(w3);
\draw (v6)--(h4);
\draw (v5)--(w5)--(v4);
\draw (v3)--(h6);
\draw [ultra thick, dark] (h4)--(h5)--(h6)--(v3);
\draw[ultra thick, dark,oriented=>] (v6) -- (h4);
\end{tikzpicture} \qedhere
\]   
\end{example}

We now return to the discussion of noncrossing set partitions from \Cref{sec:2row} and show how to interpret the results there in terms of our new language of $(1,2)$-trip digraphs.

\begin{remark}\label{rem:set_part_2_plabic}
We turn the diagram of a noncrossing set partition $\pi\in\nc(q)$ into a plabic graph as follows. Consider each $i\in [q]$ as a black boundary vertex and replace each block $B$ of $\pi$ by a white interior vertex connected to all the black boundary vertices of $B$. 
For example, the noncrossing set partition of \Cref{ex:complete_digraph} becomes the plabic graph below.
\[
\begin{tikzpicture}[scale=2, every node/.style={scale=1}]
  \def\n{13}
  \def\r{1}
  \foreach \i in {1,...,\n} {
    \pgfmathsetmacro\angle{90 - 360*(\i-1)/\n}
    \coordinate (V\i) at ({\r*cos(\angle)}, {\r*sin(\angle)});
   \pgfmathsetmacro\angle{90 - 360*(\i-1)/\n}
    \coordinate (W\i) at ({.5*\r*cos(\angle)}, {.5*\r*sin(\angle)});
  }
  \draw[thick] (0,0) circle(\r);
  \coordinate (T) at (W3);
  \node[draw, fill=white, circle, inner sep=2pt] (TT) at (T) {};
  \foreach \i in {2,3,4} {
    \draw[thick] (TT) -- (V\i);
  }
  \path (V5) -- (V11) coordinate[pos=0.33] (P1);
  \path  (V5) -- (V11) coordinate[pos=0.66] (P2);
  \path (P1) -- (P2) coordinate[midway] (T);
  \node[draw, fill=white, circle, inner sep=2pt] (SS) at (T) {};
  \foreach \i in {1,5,8,11} {
    \draw[thick] (SS) -- (V\i);
  }
  \path (V6) -- (V7) coordinate[pos=0.5] (M67);
  \path (M67) -- (0,0) coordinate[pos=0.5] (MM1);
  \draw[thick] (MM1) -- (V6);
  \draw[thick] (MM1) -- (V7);
    \node[draw, fill=white, circle, inner sep=2pt] at (MM1) {};
  \path (V9) -- (V10) coordinate[pos=0.5] (M910);
  \path (M910) -- (0,0) coordinate[pos=0.5] (MM2);
  \draw[thick] (MM2) -- (V9);
  \draw[thick] (MM2) -- (V10);
    \node[draw, fill=white, circle, inner sep=2pt] at (MM2) {};
  \path (V12) -- (V13) coordinate[pos=0.5] (M1213);
  \path (M1213) -- (0,0) coordinate[pos=0.5] (MM3);
  \draw[thick] (MM3) -- (V12);
  \draw[thick] (MM3) -- (V13);
    \node[draw, fill=white, circle, inner sep=2pt] at (MM3) {};
  \foreach \i in {1,...,\n} {
    \node[draw, fill=black, circle, inner sep=2pt] at (V\i) {};
    \node[font=\small] at ($1.2*(V\i)$) {$b_{\i}$};
  }
\end{tikzpicture}
\]
\end{remark}

With the interpretation from \Cref{rem:set_part_2_plabic} of noncrossing set partitions as plabic graphs, we have the following corollary to the results of \Cref{sec:2row}.
\begin{corollary}\label{cor:trip=prom_2row}
    The bijection $\pi$ between $2$-row rectangular increasing tableaux and noncrossing partitions satisfies $\trip_{1,2}(\pi(T))=\prom_1(T)$ for all $T \in \inc^q(2 \times c)$.
\end{corollary}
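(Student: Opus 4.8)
The plan is to lean on \Cref{thm:complete_graphs}, which already presents $\prom_1(T)$ as the union of complete digraphs indexed by the blocks of $\pi(T)$, and to match this against a direct evaluation of $\trip_{1,2}$ on the plabic graph $G$ attached to $\pi(T)$ in \Cref{rem:set_part_2_plabic}. The crucial structural observation is that $G$ is, as an abstract graph, a disjoint union of ``stars'': for each block $B$ there is a single white interior vertex $w_B$ of degree $|B|$, joined to the degree-$1$ black boundary vertices $\{b_i : i \in B\}$, and distinct blocks share no vertices or edges. Consequently every trip is confined to a single star, and it suffices to analyze one block $B$ and verify that the induced trip digraph is the complete digraph on $B$.

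First I would start a trip at a boundary vertex $b_\alpha$ with $\alpha \in B$ and follow its unique edge into $w_B$. At $w_B$, which is white of degree $h = |B|$, the rule of \Cref{def:trip_digraphs} with $i = 1$ and $r = 2$ permits the exits ``$(1+\ell)$th left'' for $0 \le \ell \le h - 2$, i.e.\ the $1$st through $(h-1)$th lefts. Reading the edges around $w_B$ cyclically starting from the incoming edge, these $h-1$ turns enumerate \emph{all} edges of $w_B$ other than the one just traversed. Each such edge runs directly to a boundary vertex $b_\beta$ with $\beta \in B \setminus \{\alpha\}$, and since boundary vertices have degree $1$ the trip terminates there. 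Hence $\trip_{1,2}(G)$ contains $\alpha \to \beta$ for every $\beta \in B$ distinct from $\alpha$; letting $\alpha$ range over $B$ produces exactly the complete digraph on $B$.

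The only case requiring separate remark is a singleton block $B = \{\alpha\}$, which by \Cref{lem:equivariant_bij} records a value $\alpha$ not appearing in $T$. Here $w_B$ has degree $1$, the index range $0 \le \ell \le h - 2 = -1$ is empty, and the trip has no continuation; so $\alpha$ is isolated, matching both the edgeless complete digraph on one vertex and the isolated vertex $\alpha$ of $\prom_1(T)$. Reassembling the blocks gives $\trip_{1,2}(\pi(T)) = \prom_1(T)$.

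The content here is light, and the only thing to be careful about is convention: one must fix the meaning of ``$k$th left'' at an interior vertex and confirm that the allowed turns at $w_B$ sweep out precisely the $h-1$ non-incoming edges (which holds for any cyclic reading, since $k$ runs from $1$ to $h-1$). The potential worry about planar routing---which boundary vertex a trip actually reaches---does not arise, because each boundary vertex has degree $1$, so the trip hits its target immediately upon leaving $w_B$ with no further choices. I expect this last point to be the one most worth stating explicitly, as it is what makes the computation collapse to the blockwise star analysis.
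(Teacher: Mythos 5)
Your proof is correct and follows the same route as the paper, which simply cites \Cref{thm:complete_graphs} and \Cref{def:trip_digraphs}; you have usefully filled in the blockwise star computation (and the singleton-block edge case) that the paper leaves implicit.
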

\begin{proof}
    This is straightforward from \Cref{thm:complete_graphs} and \Cref{def:trip_digraphs}.
\end{proof}

We now turn to the question of finding an analogue for $3$-row increasing tableaux, motivated by the conjecture \cite[Conjecture~4.12]{Dilks.Pechenik.Striker} on the order of $K$-promotion on $\inc^q(3\times c)$.
We recall below certain webs from \cite{Kim:embedding,Kim:flamingo}. 
We will first need the following definition, which is equivalent to \cite[Definition~7.5.7]{Fomin.Williams.Zelevinsky} after a color swap to match web conventions.

\begin{definition}
    A plabic graph $G$ is \newword{normal} if
    \begin{enumerate}
        \item all boundary vertices are black and have degree $1$,
        \item all internal black vertices have degree $3$,
        \item no two vertices of the same color are connected by an edge.
    \end{enumerate}
\end{definition}

Building on \cite{Fraser.Patrias.Pechenik.Striker}, which considered the special case of graphs with no internal black vertices, Jesse Kim \cite[\S 5]{Kim:embedding} introduced the following set of diagrams in conjectural connection to the Specht module $S^{(k,k,k,1^{n-3k})}$; Kim then established this connection in \cite{Kim:flamingo}. 

\begin{definition}[\cite{Kim:embedding}]
    A \newword{flamingo web} is a normal plabic graph where
    \begin{enumerate}
        \item every interior face has at least $6$ vertices and 
        \item every white vertex has degree at least $3$.
    \end{enumerate}
Let $\fw(n, k)$ denote the set of all flamingo webs with $n$ boundary vertices such that the number of white vertices minus the number of black interior vertices equals $k$.
\end{definition}

The following is our main conjecture.

\begin{conjecture}\label{conj:trip=prom}
    Each $W \in \fw(q,k)$ has a distinct pair $\trip_{1,3},\trip_{2,3}$ of trip digraphs. Moreover, for each $W$, there is a unique tableau $\tau(W) \in \inc^q(3 \times (q-2k))$ with $\trip_{i,3}(W) = \prom_i(\tau(W))$ for $i=1,2$. This yields an injection $\tau : \fw(q,k) \to \inc^q(3 \times (q-2k))$ intertwining web rotation with $K$-promotion.
\end{conjecture}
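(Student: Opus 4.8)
The plan is to isolate three sub-goals and let \Cref{thm:inc_tab_determined} carry the burden of uniqueness. Since a rectangular increasing tableau is determined by its tuple of promotion digraphs, the tableau $\tau(W)$ is automatically unique \emph{once it exists}, so the uniqueness clause is free and the conjecture reduces to \textbf{(i)} \emph{realizability}: for each $W \in \fw(q,k)$ the pair $(\trip_{1,3}(W),\trip_{2,3}(W))$ equals $(\prom_1(T),\prom_2(T))$ for some $T \in \inc^q(3\times(q-2k))$; \textbf{(ii)} \emph{distinctness}: distinct flamingo webs have distinct pairs of trip digraphs; and \textbf{(iii)} \emph{equivariance}: $\tau\circ\rot = \promotion\circ\tau$. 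Granting (i) and (ii), the map $\tau$ is a well-defined set injection: well-definedness and uniqueness come from \Cref{thm:inc_tab_determined}, and if $\tau(W)=\tau(W')$ then $W,W'$ share all promotion digraphs, hence all trip digraphs, whence $W=W'$ by (ii).

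A crucial warning governs how (iii) may be approached. One is tempted to prove a tableau-side rotation law $\prom_i(\promotion(T)) = \sigma^{-1}\prom_i(T)\sigma$ (with the wraparound convention $q+1\equiv 1$), in analogy with \Cref{cor:rect_rotate} for standard tableaux. However, \emph{no such law can hold for rectangular increasing tableaux in general}: combined with \Cref{thm:inc_tab_determined} it would force $\promotion^q(T)=T$ for every rectangular increasing $T$, contradicting \Cref{ex:indegree0}, where the order is $675$. For three rows the wraparound law is therefore \emph{equivalent} to \Cref{conj:3row_order} and cannot be used as an input. What is unconditional is the web-side law $\trip_{i,3}(\rot(W)) = \sigma^{-1}\,\trip_{i,3}(W)\,\sigma$ with full wraparound, immediate from \Cref{def:trip_digraphs} since one-step rotation merely relabels the start and end of every trip, together with the non-wraparound relabeling of promotion digraphs (the analogue of \Cref{lem:prom.P}). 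Consequently (iii) must be established \emph{directly from an explicit construction of $\tau$}, by matching a single rotation step of $W$ to a single $K$-promotion step of $\tau(W)$; as a dividend, $\rot^q=\id$ then forces $\promotion^q=\id$ on $\operatorname{im}\tau$, recovering \Cref{conj:3row_order} on the image.

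The realizability statement (i) together with this explicit one-step matching is where the genuine work lies, and I expect it to be the main obstacle — especially as we have no clean characterization of the promotion digraphs of three-row rectangular increasing tableaux (the complete characterization in this paper stops at two rows, \Cref{thm:complete_graphs}). Rather than characterize the target class abstractly, I would construct $\tau(W)$ by reading a lattice word in the alphabet $2^{[3]}$ off the connectivity of $W$, in direct generalization of the noncrossing-set-partition-to-plabic dictionary of \Cref{rem:set_part_2_plabic} and \Cref{cor:trip=prom_2row}. The decisive combinatorial identification is between the branching of a trip at a white vertex of degree $h>3$, where \Cref{def:trip_digraphs} permits the $(i+\ell)$th turn for $0\le\ell\le h-3$, and the branching of the $K$-gromotion flow path at a teetering configuration, where repeated equal entries split the flow path (cf.\ the role of teetering points in \Cref{thm:Kpromotion_2row_via_balance_points}). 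I would model each white vertex of degree $h$ as a local fragment contributing to a partial $3$-row tableau and verify that the normality and $6$-valent-face hypotheses defining flamingo webs are precisely what make the resulting word a valid lattice word of a $3\times(q-2k)$ rectangle, the column count $q-2k$ being forced by the identity (white vertices) $-$ (interior black vertices) $=k$.

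Finally, for distinctness (ii) I would adapt Postnikov's recovery of a reduced plabic graph from its decorated trip permutation to the flamingo setting. A single trip permutation no longer determines the graph once white vertices have degree $>3$, but the \emph{pair} $\trip_{1,3},\trip_{2,3}$ records exactly the branching data, and the normality and face constraints rigidify the embedding; I would show that the white-vertex degree sequence and the cyclic pattern of their boundary attachments can be read off from how and where trips split, recovering $W$ up to isotopy. Here the subtlety flagged after \Cref{def:trip_digraphs} must be dispatched: one must verify that for flamingo webs every admissible partial turn-sequence either exits to the boundary or is excluded, so that no trip loops forever and $\trip_{i,3}(W)$ is well defined and carries the full reconstruction data.
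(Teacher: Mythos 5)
The statement you are addressing is \Cref{conj:trip=prom}, which is a \emph{conjecture}: the paper offers no proof of it, and explicitly defers the matter to future work (the authors suggest that a proof should go through a ``flamingo analogue'' of the separation labelings of Gaetz--Pechenik--Pfannerer--Striker--Swanson). So there is no paper proof to compare against, and the relevant question is whether your proposal itself constitutes a proof. It does not: it is a research plan. All three of your sub-goals --- realizability (i), distinctness (ii), and equivariance (iii) --- are left as ``I would construct\dots'', ``I would show\dots'' sketches, and you yourself identify (i) as ``where the genuine work lies.'' The central construction (reading a lattice word in the alphabet $2^{[3]}$ off the connectivity of $W$, matching white-vertex branching to teetering-point branching of the flow path, and verifying that normality plus the $6$-valent-face condition force a valid $3\times(q-2k)$ lattice word) is asserted by analogy with the two-row case of \Cref{rem:set_part_2_plabic} and \Cref{cor:trip=prom_2row} but never carried out, and the two-row case is genuinely degenerate here (no branching, $\prom_2$ trivial), so the analogy does not by itself supply the argument. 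Likewise, for (ii) you note that Postnikov's reconstruction does not directly apply once white vertices have degree $>3$, but you do not supply the replacement; and the well-definedness issue (trips that loop forever) is flagged but not resolved for flamingo webs.

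That said, several of your reductions are sound and worth recording. The uniqueness clause of the conjecture really is free given existence, by \Cref{thm:inc_tab_determined}. Your warning about equivariance is a genuinely good observation: a full wraparound conjugation law $\prom_i(\promotion(T))=\sigma^{-1}\prom_i(T)\sigma$ for three-row rectangular increasing tableaux would, together with \Cref{thm:inc_tab_determined}, immediately give $\promotion^q(T)=T$, i.e.\ the open \Cref{conj:3row_order}; and \Cref{ex:indegree0} shows such a law fails for four rows. So equivariance cannot be imported from the tableau side and must come from an explicit one-step matching on the web side, exactly as you say --- with the pleasant corollary that \Cref{conj:3row_order} would then hold on the image of $\tau$, which is precisely the motivation the paper states. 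One small caution: you invoke ``the analogue of \Cref{lem:prom.P}'' for increasing tableaux, but the paper only proves that lemma for standard tableaux; the increasing-tableau version is plausible (the underlying gromotion orbits are the same up to alphabet relabeling) but would itself need a proof. In sum: a well-organized plan with a correct skeleton, but the conjecture remains open after it.
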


\begin{example}
The plabic graph $W$ from \Cref{ex:flamingo_trip_digraph}, reproduced below, is a flamingo web with $\tau(W)$ the tableau from \Cref{ex:inc_promotion_digraph_rectangle}. The reader may check that their promotion and trip digraphs are the same.

  \begin{center}
  \raisebox{1in}{$W=$}  \begin{tikzpicture}[scale=2, every node/.style={font=\small}] 
\tikzstyle{filled} = [circle, draw=black, fill=black, inner sep=1.8pt]
\tikzstyle{unfilled} = [circle, draw=black, fill=white, inner sep=1.8pt]
\def\outerradius{1.2}         
\def\innerradius{0.45}        
\def\outerwhiteradius{0.70}   
\def\labeloffset{0.12}        
\foreach \i in {1,...,10} {
  \node[filled] (v\i) at ({90-36*(\i-1)}:\outerradius) {};
  \ifnum\i=1
    \node at ({90-36*(\i-1)}:{\outerradius+\labeloffset}) {$b_{10}$};
  \else
    \node at ({90-36*(\i-1)}:{\outerradius+\labeloffset}) {$b_{\number\numexpr\i-1\relax}$};
  \fi
}
\draw[thick] (0,0) circle (\outerradius);
\foreach \i [count=\j from 1] in {90,150,210,270,330,30} {
  \ifodd\j
    \node[filled] (h\j) at (\i:\innerradius) {};
  \else
    \node[unfilled] (h\j) at (\i:\innerradius) {};
  \fi
}
\foreach \j in {1,...,6} {
  \pgfmathtruncatemacro{\k}{mod(\j,6)+1}
  \draw (h\j) -- (h\k);
}
\foreach \j/\angle in {1/90,3/210,5/330} {
  \node[unfilled] (w\j) at (\angle:\outerwhiteradius) {};
  \draw (h\j) -- (w\j); 
}
\draw (v1)--(w1)--(v2);
\draw (v10)--(w1)--(h1)--(h2)--(v9);
\draw (v8)--(w3)--(h3);
\draw (v7)--(w3);
\draw (v6)--(h4);
\draw (v5)--(w5)--(v4);
\draw (v3)--(h6);
\end{tikzpicture} \hspace{1cm}
\ytableausetup{boxsize=1.5em}
\raisebox{1in}{$\tau(W)=\ytableaushort{1236,4569,789{10}}$}
\end{center}
\end{example}

\begin{remark}
    We believe that it should be possible to prove \Cref{conj:trip=prom} by developing a flamingo analogue of the separation labelings from \cite{Gaetz.Pechenik.Pfannerer.Striker.Swanson:SL4,Gaetz.Pechenik.Pfannerer.Striker.Swanson:2column}. Further discussion may appear elsewhere.
\end{remark}

\begin{remark}
    \Cref{conj:trip=prom} would imply currently unresolved cases of \cite[Conjecture~4.12]{Dilks.Pechenik.Striker} (stated in the introduction to this paper as \Cref{conj:3row_order}). It would be interesting to find a further generalization of flamingo webs corresponding to the remaining elements of $\inc^q(3 \times b)$. In particular, we currently do not know a corresponding construction for any of the cases where $q$ and $b$ have opposite parity. However, even in the case $q \equiv b \pmod 2$, the map $\tau$ of \Cref{conj:trip=prom} is generally not surjective.
\end{remark}

\begin{remark}
    It would also be interesting to develop flamingo analogues of the $\SL_4$-webs from \cite{Gaetz.Pechenik.Pfannerer.Striker.Swanson:SL4} and relate them to instances of $K$-promotion on $\inc^q(4 \times b)$. Here, an obvious obstacle (even beyond the complexity of the constructions of \cite{Gaetz.Pechenik.Pfannerer.Striker.Swanson:SL4}) is that the order of $K$-promotion on $\inc^q(4 \times b)$ is generally greater than $q$ (see e.g.~\Cref{ex:indegree0}). 
\end{remark}

\section*{Acknowledgements}
OP acknowledges support from NSERC Discovery Grant RGPIN-2021-02391 and Launch Supplement DGECR-2021-00010. JS acknowledges support from Simons Foundation gift MP-TSM-00002802 and NSF grant DMS-2247089.

We are grateful for helpful conversations with Ashleigh Adams, Ron Cherny, Christian Gaetz, Jesse Kim, Stephan Pfannerer, Sophie Spirkl, and Josh Swanson. We would also like to thank Ben Adenbaum for code to produce promotion digraphs. We are very grateful to the anonymous referees for their valuable comments, including identifying a false lemma in an earlier version of the manuscript.

\bibliographystyle{amsalphavar}
\bibliography{increasingwebs}
\end{document}